\numberwithin{equation}{section}
\numberwithin{figure}{section}
\theoremstyle{plain}
\newtheorem{thm}{Theorem}[section]
 \theoremstyle{definition}
  \newtheorem{example}[thm]{Example}
  \theoremstyle{definition}
  \newtheorem{defn}[thm]{Definition}
  \theoremstyle{plain}
  \newtheorem{lem}[thm]{Lemma}
  \theoremstyle{remark}
  \newtheorem{rem}[thm]{Remark}
\begin{document}

\title{Rees Algebras of Diagonal Ideals}

\author{Kuei-Nuan Lin}

\subjclass[2000]{\textcolor{black}{Primary 13C40, 14M12 ; Secondary 13P10, 14Q15. }}

\keywords{\textcolor{black}{Rees Algebra, Join Variety, Determinantal Ring,
Gröbner Basis, Symmetric Algebra.}}
\begin{abstract}
There is a natural epimorphism from the symmetric algebra to the Rees
algebra of an ideal. When this epimorphism is an isomorphism, we say
that the ideal is of linear type. Given two determinantal rings over
a field, we consider the diagonal ideal, the kernel of the multiplication
map. We prove that the diagonal ideal is of linear type and recover
the defining ideal of the Rees algebra in some special cases. The
special fiber ring of the diagonal ideal is the homogeneous coordinate
ring of the join variety. 
\end{abstract}
\maketitle

\section{INTRODUCTION}

In this paper we address the problem of determining the equations
that define the Rees algebra of an ideal. Besides encoding asymptotic
properties of the powers of an ideal, the Rees algebra realizes, algebraically,
the blow-up of a variety along a subvariety. Though blowing up is
a fundamental operation in the birational study of algebraic varieties
and, in particular, in the process of desingularization, an explicit
description of the resulting variety in terms of defining equations
remains a difficult problem. 

Let $I$ be an ideal in a Noetherian ring $R$. The Rees algebra $\mathcal{R}(I)$
of $I$ is the graded subalgebra $R[It]\cong\oplus_{n\geq0}I^{n}$
of $R[t]$. When $I$ is generated by $f_{1},...,f_{u}$, there is
a natural map $\phi$ from $R[t_{1},...,t_{u}]$ to $\mathcal{R}(I)$
sending $t_{i}$ to $f_{i}t$. The kernel of $\phi$ is the defining
ideal of $\mathcal{R}(I)$ in the ring $R[t_{1},...,t_{u}]$. There
is another natural map $\psi$ from $\mbox{Sym}(R^{u})=R[t_{1},...,t_{u}]$
to $\mbox{Sym}(I)$, the symmetric algebra of $I$, and the kernel
of $\psi$ is the defining ideal of $\mbox{Sym}(I)$. This ideal is
generated by the entries of the product of $(t_{1},...,t_{u})$ and
the presentation matrix of $I$. The defining ideal of $\mbox{Sym}(I)$
is contained in the kernel of $\phi$. Hence we have a surjective
map from $\mbox{Sym}(I)$ to $\mathcal{R}(I)$. The ideal $I$ is
said to be of \textit{linear type} if $\mbox{Sym}(I)$ is naturally
isomorphic to $\mathcal{R}(I)$. Hence we obtain the defining equations
of $\mathcal{R}(I)$ for free in this case. 

In general, an ideal is not of linear type. The first known class
of ideals of linear type are complete intersection ideals \cite{M}.
Ideals generated by $d$-sequences are another large class of ideals
of linear type \cite{H1}, \cite{V}. These sequences play a role
in the theory of approximation complexes similar to the role regular
sequences play in the theory of Koszul complexes. Later Herzog-Simis-Vansconcelos
and Herzog-Vansconcelos-Villarreal used strongly Cohen-Macaulay and
sliding depth conditions to describe classes of ideals of linear type
\cite{HSV1}, \cite{HSV2}, \cite{HVV}. Huneke proved that if $X$
is a generic $n\times n$ matrix and $I$ is the ideal of $n-1$ size
minors of $X$ in $R=\mathbb{Z}[x_{ij}]$, then $I$ is of linear
type \cite{H2}. Villarreal showed the edge ideals of a tree or a
graph with a unique odd cycle are ideals of linear type \cite{Vi}.
In this paper, we give a new class of ideals of linear type, diagonal
ideals of determinantal rings.

Now we describe the setting of this work. Let $k$ be a field, $R$
be a polynomial ring over the field $k$ with variables $\{x_{ij}\}$,
and $X$ is the generic $m\times n$ matrix $(x_{ij})$. Given two
homogeneous $R$-ideals, $I_{1}$ and $I_{2}$, we consider the kernel
of the multiplication map from $S=R/I_{1}\otimes R/I_{2}$ to $R/(I_{1}+I_{2})$.
The kernel is the \textit{diagonal} ideal $\mathbb{D}$ of the ring
$S$ and $ $$\mathbb{D}$ is generated by the images of $x_{i}\otimes1-1\otimes x_{i}$
in the ring $S$. The main result of this paper shows that the ideal
$\mathbb{D}$ is of linear type if $I_{1}$, $I_{2}$ are the ideals
of maximal minors of submatrics of $X$. Notice $I_{1}$ and $I_{2}$
are in general not of linear type (see \cite{H2}, 2.6). 

In this particular case, the \textit{special fiber ring} of $I$,
$\mathcal{F}(\mathbb{D})=\mathcal{R}(\mathbb{D})\otimes_{S}k$, is
the homogeneous coordinate ring of the embedded join varieties of
$V(I_{1})$ and $V(I_{2})$ in projective space $\mathbb{P}_{k}^{m\times n-1}$.
Hence when $\mathbb{D}$ is an ideal of linear type, the embedded
join is the whole space. But it is not true in general that if the
embedded join variety is the whole space, the diagonal ideal $\mathbb{D}$
is of linear type. See Example \ref{notFiber} in Section 2. 

The proof of the main result is in Section 2. We now describe the\textcolor{black}{{}
idea of the proof. We use the defining ideals of $\mbox{Sym}(\mathbb{D})$
to understand the defining ideals of $\mathcal{R}(\mathbb{D})$. }We
identify some specific equations in the defining ideal $\mathcal{J}$
of $\mbox{Sym}(\mathbb{D})$, and consider the subideal $\mathcal{L}$
of $\mathcal{J}$ they generate.

\textcolor{black}{Notice that $\mathcal{L}\subset\mathcal{J}\subset\mathcal{K}$,
where $\mathcal{K}$ is the defining ideal of $\mathcal{R}(\mathbb{D})$,
hence the goal is to prove that $\mathcal{L}=\mathcal{K}$. We use
Buchberger's Algorithm to find a Gröbner basis of the ideal $\mathcal{L}$
with respect to some monomial order. }We find a set of polynomials
that are in the ideal $\mathcal{L}$ and show that all the remainders
between elements in this set are zero. Hence we find a Gröbner basis
of the ideal $\mathcal{L}$. Once we have the Gröbner basis, we have
the generating set for the initial ideal $\mbox{in}(\mathcal{L})$
of $\mathcal{L}$. This way we find a non zero-divisor modulo \textcolor{black}{$\mathcal{L}$},
which we may invert thereby reducing to the case of a smaller matrix.
Thus we show that $\mathcal{L}=\mathcal{K}$. As a consequence, the
two algebras $\mbox{Sym}(\mathbb{D})$ and $\mathcal{R}(\mathbb{D})$
are naturally isomorphic and we obtain an explicit description of
the defining equations of $\mathcal{R}(\mathbb{D})$.

\medskip{}

\begin{flushleft}
\textbf{Acknowledgments:} This work is based on author's Ph. D. thesis
from Purdue University under the direction of Professor Bernd Ulrich.
The author is very grateful for so many useful suggestions from Professor
Ulrich.
\par\end{flushleft}

\section{Main results}

Let $k$ be a field, $2\leq m\leq n$ integers, \textcolor{black}{$X_{mn}=[x_{ij}],\: Y_{mn}=[y_{ij}],$}
$Z_{mn}=[z_{ij}]$, $m\times n$ matrices of variables over $k$.
Let\textcolor{black}{{} $2\leq s_{i}\leq t_{i}$ integers, and let $X_{s_{1}t_{1}}$,
$Y_{s_{2}t_{2}}$ be the submatrices of $X$ and $Y$ consisting of
the first $s_{i}$ rows and first $t_{i}$ columns.} We write $I=I_{s_{1}}(X_{s_{1}t_{1}})$,
$J=I_{s_{2}}(X_{s_{2}t_{2}})$ the ideals of $k[X]$ generated by
the maximal minors of $X_{s_{1}t_{1}}$ and the maximal minors of
$X_{s_{2}t_{2}}$. Let $R_{1}=k[X]/I$, $R_{2}=k[X]/J$ be the two
determinantal rings. We consider the diagonal ideal $\mathbb{D}$
of $R_{1}\otimes_{k}R_{2}$, defined via the exact sequence \[
0\longrightarrow\mathbb{D}\longrightarrow R_{1}\otimes_{k}R_{2}\overset{_{\mathrm{mult.}}}{\longrightarrow}k[X]/(I+J)\longrightarrow0.\]
 The ideal $\mathbb{D}$ is generated by the images of $x_{ij}\otimes1-1\otimes x_{ij}$
in $R_{1}\otimes_{k}R_{2}$. 

\textcolor{black}{We write the diagonal ideal $\mathbb{D}=(\{x_{ij}-y_{ij}\})$
in \[
S=k[X_{mn},Y_{mn}]/(I_{s_{1}}(X_{s_{1}t_{1}}),I_{s_{2}}(Y_{s_{2}t_{2}}))\cong R_{1}\otimes_{k}R_{2}.\]
We have a presentation of $\mathbb{D}$,\[
\begin{array}{cccccc}
S^{l} & \overset{\phi}{\longrightarrow} & S^{mn} & \longrightarrow\mathbb{D} & \longrightarrow & 0\end{array}\]
From this we obtain a presentation of the symmetric algebra of $\mathbb{D}$,
\[
0\rightarrow(\mbox{image}(\phi))=J\longrightarrow\mbox{Sym}(S^{mn})=S[Z_{mn}]=T\longrightarrow\mbox{Sym}(\mathbb{D})\rightarrow0.\]
Here $J$ is the ideal generated by the entries of the row vector
$[z_{11},z_{12},...,z_{1n},....,z_{mn}]\cdot\phi$.} Hence \textcolor{black}{\[
\mbox{Sym}(\mathbb{D})\cong T/J\]
}where $J$ is generated by linear forms in the variables $z_{ij}$.
We write $\mathcal{R}(\mathbb{D})=T/K$, $J\subset K$. In general
$K$ is not generated by linear forms. We can rewrite $\mbox{Sym}(\mathbb{D})=T/J=k[X_{mn},Y_{mn,}Z_{mn}]/\mathcal{J}$
and $\mathcal{R}(\mathbb{D})=k[X_{mn},Y_{mn},Z_{mn}]/\mathcal{K}$.
In this particular case, the \textit{special fiber ring} of $I$,
$\mathcal{F}(\mathbb{D})=\mathcal{R}(\mathbb{D})\otimes_{S}k$, is
the homogeneous coordinate ring of the embedded join varieties of
$V(I_{1})$ and $V(I_{2})$ in projective space $\mathbb{P}_{k}^{m\times n-1}$. 
\begin{thm}
\textcolor{black}{\label{linear} The ideal $\mathbb{D}$ is of linear
type if $I_{1}$ and $I_{2}$ are generated by the maximal minors
of submatrices of $X$. So $\mathcal{R}(\mathbb{D})\cong\mathrm{Sym}(\mathbb{D})$.}
\end{thm}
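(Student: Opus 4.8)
The plan is to exhibit an explicit subideal $\mathcal{L}$ of the defining ideal $\mathcal{J}$ of $\mathrm{Sym}(\mathbb{D})=T/\mathcal{J}$ and to prove that $\mathcal{L}$ already equals the defining ideal $\mathcal{K}$ of $\mathcal{R}(\mathbb{D})=T/\mathcal{K}$; since $\mathcal{L}\subseteq\mathcal{J}\subseteq\mathcal{K}$, this forces $\mathcal{J}=\mathcal{K}$, which is exactly the statement that $\mathbb{D}$ is of linear type. To choose $\mathcal{L}$, I would read off generators of $\mathcal{J}$ from a presentation of $\mathbb{D}$ over $S$: the trivial Koszul syzygies of the generators $x_{ij}-y_{ij}$ contribute the quadrics $z_{ij}(x_{kl}-y_{kl})-z_{kl}(x_{ij}-y_{ij})$, and each defining minor $\delta$ of $R_1$ or $R_2$, expanded by multilinearity of the determinant along the columns $x_{\bullet j}-y_{\bullet j}$, yields an identity $\delta(X)-\delta(Y)=\sum_{i,j}M_{ij}(x_{ij}-y_{ij})$ with mixed maximal minors $M_{ij}$; since $\delta(X)$ and $\delta(Y)$ vanish in $S$ in the relevant cases, the $Z$-linear form $\sum_{i,j}M_{ij}z_{ij}$ belongs to $\mathcal{J}$. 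Let $\mathcal{L}\subseteq k[X,Y,Z]$ be generated by these quadrics and $Z$-linear forms together with the generators of $I_{s_1}(X_{s_1t_1})$ and $I_{s_2}(Y_{s_2t_2})$.

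The technical core is to control $\mathcal{L}$ through a Gr\"obner basis. I would fix a diagonal term order on $k[X,Y,Z]$, arranged so that the maximal minors of $X_{s_1t_1}$ and $Y_{s_2t_2}$ have squarefree diagonal leading terms (as in the classical theory of determinantal rings) and so that a chosen corner variable is kept out of the leading terms of the quadrics and of the $Z$-linear forms. I would then propose a candidate Gr\"obner basis --- the generators above together with the Laplace- and Pl\"ucker-type consequences they force --- and verify Buchberger's criterion, reducing every $S$-polynomial to zero by means of Laplace expansion, the straightening (Pl\"ucker) relations among minors, and the identities relating the $Z$-linear forms back to the minors. From the resulting initial ideal $\mathrm{in}(\mathcal{L})$ I would single out an element $f$ that divides no minimal monomial generator of $\mathrm{in}(\mathcal{L})$, hence is a nonzerodivisor modulo $\mathcal{L}$ (and modulo $\mathcal{K}$): when one of the submatrices is a proper submatrix of $X$, one may take $f=x_{ij}-y_{ij}$ for a position $(i,j)$ outside both submatrices; in the remaining cases one takes a corner entry (say the lower-left one) of a submatrix, whose variable does not occur in the diagonal leading term of any maximal minor.

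Inverting $f$ then closes the argument. If $f$ is a generator of $\mathbb{D}$, then $\mathbb{D}_{S_f}$ is the unit ideal, so $\mathrm{Sym}_{S_f}(\mathbb{D}_{S_f})=\mathcal{R}_{S_f}(\mathbb{D}_{S_f})=S_f[t]$, and a direct computation (the Koszul quadrics eliminate all $z_{kl}$ in terms of a single one, and the $Z$-linear forms then reduce to the trivial identities $\delta(X)-\delta(Y)=0$) shows that the surjection $(T/\mathcal{L})_f\to(T/\mathcal{K})_f$ is an isomorphism, so $\mathcal{L}_f=\mathcal{K}_f$. If instead $f$ is a corner entry, the Cramer's-rule row and column reduction at $f$ identifies $(T/\mathcal{L})_f$, up to a Laurent polynomial extension, with the analogous object built from strictly smaller matrices, and one concludes $\mathcal{L}_f=\mathcal{K}_f$ by induction on the size of the ambient matrix, the base cases being those in which some submatrix has a single row (so that the determinantal ring is a polynomial ring and $\mathbb{D}$ is, after a linear change of coordinates, generated by part of a regular system of parameters, hence of linear type). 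In either case, since $f$ is a nonzerodivisor modulo $\mathcal{L}$, we get $\mathcal{L}=\mathcal{L}_f\cap T=\mathcal{K}_f\cap T\supseteq\mathcal{K}$, whence $\mathcal{L}=\mathcal{J}=\mathcal{K}$ and $\mathcal{R}(\mathbb{D})\cong\mathrm{Sym}(\mathbb{D})$.

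I expect the Buchberger verification to be the main obstacle: one must identify the correct finite list of Laplace and straightening consequences of the generators, choose the term order so that the determinantal part of $\mathrm{in}(\mathcal{L})$ and the part coming from the symmetric-algebra relations are simultaneously transparent --- in particular so that a usable nonzerodivisor $f$ is visible --- and then check that all $S$-polynomial remainders vanish, a bookkeeping problem governed by the combinatorics of minors and their straightening. A second delicate point is pinning down the exact shape of the $Z$-linear generators of $\mathcal{L}$, and of the induction, in the cases where the two submatrices are not nested or where one of them fills the ambient matrix, so that the localization at $f$ returns precisely the same type of problem with strictly smaller parameters.
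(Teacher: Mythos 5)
Your outline coincides with the paper's: the same subideal $\mathcal{L}\subseteq\mathcal{J}\subseteq\mathcal{K}$ (the two families of minors, the Koszul quadrics $g_{ij,lk}$, and the $Z$-linear determinantal forms $f_{a_{1},\dots,a_{s_{1}}}$), a Gr\"obner basis of $\mathcal{L}$ via Buchberger's criterion arranged so that a corner variable avoids the initial ideal and is therefore a nonzerodivisor modulo $\mathcal{L}$, then localization, a Cramer-type change of variables, and induction on the size of the matrix. The genuine problem is in your reduction step. Inverting a single corner entry of one submatrix only shrinks the $X$-side: under $x'_{ij}=x_{ij}-x_{i1}x_{1j}/x_{11}$ the ideal $I_{s_{1}}(X_{s_{1}t_{1}})$ becomes $I_{s_{1}-1}$ of a smaller matrix, but $I_{s_{2}}(Y_{s_{2}t_{2}})$, the diagonal generators $x_{ij}-y_{ij}$, and the $z_{ij}$ are untouched, so $(T/\mathcal{L})_{f}$ is not, even up to a Laurent extension, ``the analogous object built from strictly smaller matrices,'' and the induction hypothesis cannot be invoked. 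The paper inverts both $x_{11}$ and $y_{11}$, proves each is a nonzerodivisor modulo $\mathcal{L}$ (the second by exchanging the roles of $X$ and $Y$), and uses the simultaneous substitution $x'_{ij}$, $y'_{ij}=y_{ij}-y_{i1}y_{1j}/y_{11}$, and $z'_{ij}=z_{ij}-y_{i1}z_{1j}/y_{11}-x_{1j}z_{i1}/y_{11}+x_{i1}x_{1j}z_{11}/x_{11}y_{11}$, chosen precisely so that $z'_{ij}$ corresponds to $x'_{ij}-y'_{ij}$ in the Rees algebra; only then is the localized problem literally the same problem for $(m-1)\times(n-1)$ matrices (plus harmless border Koszul relations), and only then does $\mathcal{L}_{x_{11}y_{11}}=\mathcal{K}_{x_{11}y_{11}}$ follow by induction. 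Your closing identity $\mathcal{L}=\mathcal{L}_{f}\cap T=\mathcal{K}_{f}\cap T$ survives verbatim once both corner variables are known to be nonzerodivisors modulo $\mathcal{L}$.

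Two smaller points. The shortcut $f=x_{ij}-y_{ij}$ for a position outside both submatrices is not supported by your stated criterion: with $z>x>y$ its initial term is $x_{ij}$, and $x_{ij}$ does divide minimal generators of $\mathrm{in}(\mathcal{L})$, namely leading terms $z_{lk}x_{ij}$ of Koszul quadrics (in the paper's $3\times4$ example all monomials $z_{ij}x_{lk}$ with $(i,j)$ preceding $(l,k)$ occur among the generators of the initial ideal); one would have to rig the order on the $z$'s around the chosen position, and in any case the corner route cannot be avoided when the two footprints cover the whole matrix. Finally, ``the generators together with Laplace- and straightening-type consequences'' understates the Buchberger step: in the paper the Gr\"obner basis requires several new families of polynomials (the $f^{l,k}$, $U$, $W$, $V$, $H$, $I$) generated by successive $S$-pair reductions, and choosing the lexicographic order so that $x_{11}$ escapes all of their leading terms is exactly the delicate point you correctly flag as the main obstacle.
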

Hence the embedded join is the whole space in the above case. \textcolor{black}{The
following example showing that even the fiber ring is the whole space,
the ideal $\mathbb{D}$ may not be of linear type in general.}
\begin{example}
\label{notFiber}Let $X$, $Y$, and $Z$ be $3\times3$ matrices
and $I_{1}=I_{3}(X)$, $I_{2}=I_{2}(X)$ be the ideal generated by
$3\times3$ and $2\times2$ minors of $X$. Write \[
S=k[X,Y]/(I_{3}(X),I_{2}(Y))\cong R_{1}\otimes_{k}R_{2},\]
 $\mbox{Sym}(\mathbb{D})=S[Z]/J$ and $\mathcal{R}(\mathbb{D})=S[Z]/K$.
Then $J=(g_{ij,lk}$, $f)$ where $g_{ij,lk}=(\{(x_{ij}-y_{ij})z_{lk}-(x_{lk}-y_{lk})z_{ij}\})$
and \[
f=\left|\begin{array}{ccc}
x_{11} & x_{12} & x_{13}\\
z_{21} & z_{22} & z_{23}\\
y_{31} & y_{32} & y_{33}\end{array}\right|+\left|\begin{array}{ccc}
x_{11} & x_{12} & x_{13}\\
x_{21} & x_{22} & x_{23}\\
z_{31} & z_{32} & z_{33}\end{array}\right|.\]
 $K=(J,$ $h)$ where \[
h=\left|\begin{array}{ccc}
z_{11} & z_{12} & z_{13}\\
z_{21} & z_{22} & z_{23}\\
y_{31} & y_{32} & y_{33}\end{array}\right|+\left|\begin{array}{ccc}
z_{11} & z_{12} & z_{13}\\
y_{21} & y_{22} & y_{23}\\
z_{31} & z_{32} & z_{33}\end{array}\right|+\left|\begin{array}{ccc}
x_{11} & x_{12} & x_{13}\\
z_{21} & z_{22} & z_{23}\\
z_{31} & z_{32} & z_{33}\end{array}\right|.\]

\end{example}
The remaining part of this section is devoted to prove Theorem \ref{linear}.
In the course of this we also describe the defining equations of $\mathcal{R}(\mathbb{D})$.
We identify some specific equations in the defining ideal $\mathcal{J}$
of $\mbox{Sym}(\mathbb{D})$. In order to clarify the notations, we
define matrices that will be used repeatedly.
\begin{defn}
\label{SubMatrix}Let $X=[x_{ij}]$, $Y=\left[y_{ij}\right]$ , $1\leq i\leq m,1\leq j\leq n$,
be $m$ by $n$ matrices, and $X_{a_{1}...a_{s}}^{l,k}=[x_{ia_{i}}]$,
$Y_{a_{1}...a_{s}}^{l,k}=[y_{ia_{i}}],$ $l\leq i\leq k,\,1\leq a_{i}\leq n$,
$X_{1...\hat{s}...n}^{l,k}=[x_{ij}]$, $l\leq i\leq k,\,1\leq j\leq n,j\neq s$
be submatrices. For the convenience of notation, we write $\mathrm{\mbox{det }}M=\left|M\right|$
when $M$ is a square matrix. We set determinate of a 0 by 0 matrix
equal to 1. We also write \[
\left[\begin{array}{c}
X^{1,j}\\
Y^{j+1,m}\end{array}\right]_{a_{1}...a_{m}}=\left[\begin{array}{ccc}
x_{1a_{1}} & ... & x_{1a_{m}}\\
\vdots &  & \vdots\\
x_{ja_{1}} & ... & x_{ja_{m}}\\
y_{j+1a_{1}} & ... & y_{j+1a_{m}}\\
\vdots &  & \vdots\\
y_{ma_{1}} & ... & y_{ma_{m}}\end{array}\right]\]
as a matrix with different variables.
\end{defn}
The following is a well-known fact: writing a matrix with variable
$y$'s as a matrix of variables $x$'s and a combination of differences
of $x$'s and $y$'s.
\begin{lem}
\label{YtoX}Let $X$ and $Y$ be $n\times n$ matrices. With notation
as above \[
|Y|=|X|+{\displaystyle \sum_{i=1}^{n}\sum_{j=1}^{n}(-1)^{i+j}\left|\begin{array}{c}
Y_{1...\hat{j}...n}^{1,i-1}\\
X_{1...\hat{j}...n}^{i+1,n}\end{array}\right|(y_{ij}-x_{ij})}\]
\end{lem}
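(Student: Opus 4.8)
The plan is to prove the identity by expressing $|Y|$ as a telescoping sum obtained from iteratively replacing each row of $Y$ by the corresponding row of $X$. Concretely, define the hybrid matrices $M_i$ whose first $i-1$ rows are the rows $y_1,\dots,y_{i-1}$ of $Y$ and whose remaining rows are the rows $x_i,\dots,x_n$ of $X$, so that $M_1$ (in the convention of Definition \ref{SubMatrix}, reading the top block as empty) is $X$ and $M_{n+1}$ is $Y$. Then $|Y|-|X| = \sum_{i=1}^{n}\bigl(|M_{i+1}|-|M_i|\bigr)$, and $M_{i+1}$ differs from $M_i$ only in the $i$-th row, which is $y_i = (y_{i1},\dots,y_{in})$ in $M_{i+1}$ versus $x_i = (x_{i1},\dots,x_{in})$ in $M_i$.

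The key step is then to evaluate each difference $|M_{i+1}|-|M_i|$ by multilinearity of the determinant in the $i$-th row. Since the two matrices agree outside row $i$, the difference is the determinant of the matrix obtained from $M_i$ by putting $y_i - x_i$ in row $i$; expanding that determinant along the $i$-th row via Laplace expansion gives $\sum_{j=1}^n (-1)^{i+j}(y_{ij}-x_{ij})\, \widehat{M}_{ij}$, where $\widehat{M}_{ij}$ is the minor of $M_i$ obtained by deleting row $i$ and column $j$. But deleting row $i$ from $M_i$ leaves exactly the rows $y_1,\dots,y_{i-1}$ of $Y$ on top and $x_{i+1},\dots,x_n$ of $X$ on the bottom, and deleting column $j$ removes the $j$-th column; this is precisely the matrix $\left[\begin{smallmatrix} Y_{1\dots\hat{j}\dots n}^{1,i-1} \\ X_{1\dots\hat{j}\dots n}^{i+1,n}\end{smallmatrix}\right]$ in the notation of Definition \ref{SubMatrix}. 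Summing over $i$ yields the claimed formula.

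I do not anticipate a genuine obstacle here; this is a routine telescoping-plus-cofactor-expansion argument. The only points requiring minor care are bookkeeping: checking that the index conventions at the boundary ($i=1$ giving the empty top block and $i=n$ giving the empty bottom block, with the $0\times 0$ determinant convention of Definition \ref{SubMatrix}) match the statement, and that the signs $(-1)^{i+j}$ from the Laplace expansion are attached to the correct minors. One can also phrase the whole proof in one line using full multilinearity of $\det$ in all $n$ rows simultaneously, but the telescoping version makes the matching with the stated submatrices most transparent, so that is the route I would write up.
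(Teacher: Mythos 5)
Your proof is correct, and it takes a genuinely different route from the paper. The paper proves Lemma \ref{YtoX} by induction on the size $n$: it expands $|Y|$ along the first row, applies the inductive hypothesis to each $(n-1)\times(n-1)$ minor $|Y_{1\dots\hat{k}\dots n}^{2,n}|$, and then recovers the stated formula by an add-and-subtract trick in the first row together with a careful interchange and reindexing of the resulting double sums. Your argument instead fixes $n$ and telescopes over the rows: writing $|Y|-|X|=\sum_{i=1}^{n}\bigl(|M_{i+1}|-|M_i|\bigr)$ for the hybrid matrices $M_i$ (rows $y_1,\dots,y_{i-1}$ on top, $x_i,\dots,x_n$ below), using multilinearity in the $i$-th row to replace each difference by a single determinant with $y_i-x_i$ in row $i$, and expanding that determinant along row $i$. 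The deleted-row-and-column minors are exactly the mixed submatrices $\left|\begin{smallmatrix} Y_{1\dots\hat{j}\dots n}^{1,i-1}\\ X_{1\dots\hat{j}\dots n}^{i+1,n}\end{smallmatrix}\right|$ with the correct Laplace sign $(-1)^{i+j}$, so the identity drops out with no reindexing at all. What your approach buys is brevity and transparency: the sign and index bookkeeping that occupies most of the paper's proof is absorbed into a single cofactor expansion, and no induction on matrix size is needed. What the paper's route buys is stylistic consistency with the rest of Section 3, where the analogous Lemma \ref{xtox-y} is also proved by induction on a row index with first-row expansions, so the two proofs can be read in parallel. Both arguments are complete and correct; yours is the shorter one.
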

\begin{proof}
We prove this by inducting on $n$. When $n=1$, the claim is trivial.
By induction, we have \[
|Y_{1...\hat{k}...n}^{2,n}|=|X_{1...\hat{k}...n}^{2,n}|+{\displaystyle \sum_{i=2}^{n}\sum_{j=1}^{k-1}(-1)^{i+j+1}\left|\begin{array}{c}
Y_{1..\hat{j}....\hat{k}....n}^{2,i-1}\\
X_{1...\hat{j}...\hat{k}...n}^{i+1,n}\end{array}\right|(y_{ij}-x_{ij})}\]
 \begin{eqnarray*}
 &  & +\sum_{i=2}^{n}\sum_{j=k+1}^{n}(-1)^{i+j}\left|\begin{array}{c}
Y_{1..\hat{k}....\hat{j}....n}^{2,i-1}\\
X_{1...\hat{k}...\hat{j}...n}^{i+1,n}\end{array}\right|(y_{ij}-x_{ij}).\end{eqnarray*}
 Hence \[
\begin{array}{ccc}
|Y| & = & \sum_{k=1}^{n}(-1)^{k+1}y_{1k}|Y_{1...\hat{k}...n}^{2,n}|\\
 & = & \sum_{k=1}^{n}(-1)^{k+1}y_{1k}\left(|X_{1...\hat{k}...n}^{2,n}|+{\displaystyle \sum_{i=2}^{n}\sum_{j=1}^{k-1}(-1)^{i+j+1}\left|\begin{array}{c}
Y_{1..\hat{j}....\hat{k}....n}^{2,i-1}\\
X_{1...\hat{j}...\hat{k}...n}^{i+1,n}\end{array}\right|(y_{ij}-x_{ij})}\right.\\
 &  & \left.+\sum_{i=2}^{n}\sum_{j=k+1}^{n}(-1)^{i+j}\left|\begin{array}{c}
Y_{1..\hat{k}....\hat{j}....n}^{2,i-1}\\
X_{1...\hat{k}...\hat{j}...n}^{i+1,n}\end{array}\right|(y_{ij}-x_{ij})\right).\end{array}\]

\[
\begin{array}{ccc}
 & {(1)\atop =} & \sum_{k=1}^{n}(-1)^{k+1}x_{1k}|X_{1...\hat{k}...n}^{2,n}|+\sum_{k=1}^{n}(-1)^{k+1}(y_{1k}-x_{1k})|X_{1...\hat{k}...n}^{2,n}|\\
 &  & +\sum_{k=1}^{n}(-1)^{k+1}y_{1k}\left(\sum_{i=2}^{n}\sum_{j=1}^{k-1}(-1)^{i+j+1}\left|\begin{array}{c}
Y_{1..\hat{j}....\hat{k}....n}^{2,i-1}\\
X_{1...\hat{j}...\hat{k}...n}^{i+1,n}\end{array}\right|(y_{ij}-x_{ij})\right.\\
 &  & \left.+\sum_{i=2}^{n}\sum_{j=k+1}^{n}(-1)^{i+j}\left|\begin{array}{c}
Y_{1..\hat{k}....\hat{j}....n}^{2,i-1}\\
X_{1...\hat{k}...\hat{j}...n}^{i+1,n}\end{array}\right|(y_{ij}-x_{ij})\right)\end{array}\]
\begin{eqnarray*}
 & = & |X|+\sum_{k=1}^{n}(-1)^{k+1}(y_{1k}-x_{1k})|X_{1...\hat{k}...n}^{2,n}|\\
 &  & +\sum_{k=1}^{n}(-1)^{k+1}y_{1k}\left(\sum_{i=2}^{n}\sum_{j=1}^{k-1}(-1)^{i+j+1}\left|\begin{array}{c}
Y_{1..\hat{j}....\hat{k}....n}^{2,i-1}\\
X_{1...\hat{j}...\hat{k}...n}^{i+1,n}\end{array}\right|(y_{ij}-x_{ij})\right.\\
 &  & \left.+\sum_{i=2}^{n}\sum_{j=k+1}^{n}(-1)^{i+j}\left|\begin{array}{c}
Y_{1..\hat{k}....\hat{j}....n}^{2,i-1}\\
X_{1...\hat{k}...\hat{j}...n}^{i+1,n}\end{array}\right|(y_{ij}-x_{ij})\right).\end{eqnarray*}
 Here equality (1) comes from adding the extra term $\sum_{j=1}^{n}(-1)^{j+1}x_{1j}|X_{1...\hat{j}...n}^{2,n}|$
and then subtracting it from $\sum_{j=1}^{n}(-1)^{j+1}y_{1j}|X_{1...\hat{j}...n}^{2,n}|$$ $.
Notice that we have the third and forth sum as the following:\begin{eqnarray*}
 & {\displaystyle \sum_{k=1}^{n}}(-1)^{k+1}y_{1k}\left(\sum_{i=2}^{n}\sum_{j=1}^{k-1}(-1)^{i+j+1}\left|\begin{array}{c}
Y_{1..\hat{j}....\hat{k}....n}^{2,i-1}\\
X_{1...\hat{j}...\hat{k}...n}^{i+1,n}\end{array}\right|(y_{ij}-x_{ij})\right.\end{eqnarray*}
\[
\left.+{\displaystyle {\displaystyle \sum_{i=2}^{n}\sum_{j=k+1}^{n}}}(-1)^{i+j}\left|\begin{array}{c}
Y_{1..\hat{k}....\hat{j}....n}^{2,i-1}\\
X_{1...\hat{k}...\hat{j}...n}^{i+1,n}\end{array}\right|(y_{ij}-x_{ij})\right)\]
\[
={\displaystyle \sum_{i=2}^{n}{\displaystyle \sum_{j=1}^{n}}}{\displaystyle (-1)^{i+j}(y_{i,j}-x_{i,j})}\left({\displaystyle \sum_{k=1}^{j-1}(-1)^{k+1}y_{1k}}\left|\begin{array}{c}
Y_{1...\hat{k}..\hat{j}...n}^{2,i-1}\\
X_{1...\hat{k}..\hat{j}...n}^{i+1,n}\end{array}\right|\right.\]
\begin{eqnarray*}
 &  & \left.+{\displaystyle \sum_{k=j+1}^{n}(-1)^{k}y_{1k}\left|\begin{array}{c}
Y_{1...\hat{j}..\hat{k}...n}^{2,i-1}\\
X_{1...\hat{j}..\hat{k}...n}^{i+1,n}\end{array}\right|}\right)\\
 & = & \sum_{i=2}^{n}\sum_{j=1}^{n}(-1)^{i+j}\left|\begin{array}{c}
Y_{1...\hat{j}...n}^{1,i-1}\\
X_{1...\hat{j}...n}^{i+1,n}\end{array}\right|(y_{ij}-x_{ij}).\end{eqnarray*}
Now we can put everything together, we obtain \begin{eqnarray*}
|Y| & = & |X|+\sum_{j=1}^{n}(-1)^{j+1}|X_{1..\hat{j}...n}^{2,n}|(y_{1j}-x_{1j})\\
 &  & +\sum_{i=2}^{n}\sum_{j=1}^{n}(-1)^{i+j}\left|\begin{array}{c}
Y_{1...\hat{j}...n}^{1,i-1}\\
X_{1...\hat{j}...n}^{i+1,n}\end{array}\right|(y_{ij}-x_{ij})\\
 & = & |X|+{\displaystyle \sum_{i=1}^{n}\sum_{j=1}^{n}(-1)^{i+j}\left|\begin{array}{c}
Y_{1...\hat{j}...n}^{1,i-1}\\
X_{1...\hat{j}...n}^{j+1,n}\end{array}\right|(y_{ij}-x_{ij})}.\end{eqnarray*}
 
\end{proof}
In the following lemma, we define those special equations that we
consider and we show that those equations are in the defining ideal
of symmetric algebra of $\mathbb{D}$.
\begin{lem}
Let $X_{a_{1}...a_{s_{1}}}$ be the $s_{1}$ by $s_{1}$ submatrix
of $X_{s_{1}t_{1}}$ with columns $a_{1},...,a_{s_{1}}$, $Y_{b_{1}...b_{s_{2}}}$
the $s_{2}$ by $s_{2}$ submatrix of $Y_{s_{2}t_{2}}$ with columns
$b_{1},...,b_{s_{2}}$, $X_{a_{1}...a_{s_{1}}}^{l,k}$ the $k-l+1$
by $s_{1}$ submatrix of $X$ with rows $l,\, l+1,..,k$ and columns
$a_{1},...,a_{s_{1}}$, and similarly for $Y$ and $Z$. 

We define \[
g_{ij,lk}=\left|\begin{array}{cc}
z_{ij} & z_{lk}\\
x_{ij}-y_{ij} & x_{lk}-y_{lk}\end{array}\right|\]
\[
f_{a_{1},...,a_{s_{1}}}=\sum_{q=1}^{s_{2}}(-1)^{q+1}\left|\left[\begin{array}{c}
Z^{q,q}\\
Y^{1,q-1}\\
X^{q+1,m}\end{array}\right]_{a_{1}...a_{s_{1}}}\right|,\]
where $1\leq a_{1}<a_{2}<...<a_{s_{1}}\leq\mathrm{min}(t_{1},t_{2})$
and $1\leq i\leq m$, $1\leq l\leq m$, $1\leq j\leq n$, $1\leq k\leq n$. 

We write $\mathcal{L}=(I_{s_{1}}(X_{s_{1}t_{1}}),\, I_{s_{2}}(Y_{s_{2}t_{2}}),\, g_{_{ij,lk}},\, f_{a_{1},...,a_{s_{1}}})$,
which is an ideal of \\
$k[X_{mn},Y_{mn},Z_{mn}]$. Then $\mathcal{L}\subset\mathcal{J}$.\label{defineL}\end{lem}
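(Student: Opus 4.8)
The plan is to verify directly that each generator of $\mathcal{L}$ lies in $\mathcal{J}$, the defining ideal of $\mathrm{Sym}(\mathbb{D})$. Recall that $\mathcal{J}$ is generated by $I_{s_1}(X_{s_1 t_1})$, $I_{s_2}(Y_{s_2 t_2})$, together with the linear forms in the $z_{ij}$ obtained from multiplying the row vector of $z$'s by the presentation matrix $\phi$ of $\mathbb{D}$. So the first two families of generators of $\mathcal{L}$ are in $\mathcal{J}$ by construction, and the task reduces to showing $g_{ij,lk}\in\mathcal{J}$ and $f_{a_1,\dots,a_{s_1}}\in\mathcal{J}$.

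For the $g_{ij,lk}$: since $\mathbb{D}$ is generated by the differences $x_{ij}-y_{ij}$, there are obvious Koszul-type syzygies among these generators, namely $(x_{lk}-y_{lk})\cdot e_{ij}-(x_{ij}-y_{ij})\cdot e_{lk}$ lies in the kernel of $S^{mn}\to\mathbb{D}$ and hence is (up to sign) a column — or a combination of columns — of the presentation matrix $\phi$. Multiplying the $z$-row-vector by this syzygy column yields exactly $z_{ij}(x_{lk}-y_{lk})-z_{lk}(x_{ij}-y_{ij})=g_{ij,lk}$ (after expanding the $2\times 2$ determinant). Thus $g_{ij,lk}\in J=\mathcal{J}$ follows once we observe that these Koszul relations are genuine relations of $\mathbb{D}$ over $S$, which is immediate.

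For the $f_{a_1,\dots,a_{s_1}}$ the argument is more delicate: here we must exhibit $f_{a_1,\dots,a_{s_1}}$ as coming from a syzygy of $\mathbb{D}$, i.e. from a relation among the $x_{ij}-y_{ij}$ holding in $S = k[X,Y]/(I_{s_1}(X_{s_1 t_1}), I_{s_2}(Y_{s_2 t_2}))$. The idea is to start from the vanishing of an $s_1\times s_1$ minor: in $R_1=k[X]/I$ we have $|X_{a_1\dots a_{s_1}}|=0$, while the corresponding minor of $Y$ need not vanish, and symmetrically in $R_2$. Applying Lemma \ref{YtoX} (the cofactor expansion of $|Y|-|X|$ along the differences $y_{ij}-x_{ij}$) to the $s_1\times s_1$ submatrix, and performing the telescoping sum over which rows are taken from $X$ versus $Y$, produces a linear combination $\sum c_{ij}\,(x_{ij}-y_{ij})$ whose coefficients $c_{ij}$ are precisely the signed minors of the mixed matrices $\left[\begin{smallmatrix} Z^{q,q}\\ Y^{1,q-1}\\ X^{q+1,m}\end{smallmatrix}\right]$ after substituting $z$ for the differenced row; the alternating sum over $q$ collapses a telescoping of $X$-only and $Y$-only minors, both of which vanish in $S$ (the $X$-minor lies in $I_{s_1}(X_{s_1 t_1})$ and the $Y$-minor in $I_{s_2}(Y_{s_2 t_2})$, using $a_i\le\min(t_1,t_2)$ and $s_1,s_2$ comparable — this is where the hypothesis on column indices is used). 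Hence $\sum c_{ij} e_{ij}$ is a syzygy of $\mathbb{D}$ modulo $(I,J)$, and dotting with the $z$-vector gives $f_{a_1,\dots,a_{s_1}}\in\mathcal{J}$.

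The main obstacle will be the bookkeeping in the last step: one must carefully track the signs and the index ranges in the telescoping sum, and in particular confirm that when $s_1\ne s_2$ the ``leftover'' pure-$X$ or pure-$Y$ minor genuinely lies in $I_{s_1}(X_{s_1 t_1})$ respectively $I_{s_2}(Y_{s_2 t_2})$ rather than in some larger determinantal ideal. Concretely, I would set up the identity of Lemma \ref{YtoX} for the $s_1\times s_1$ matrix with rows split as in Definition \ref{SubMatrix}, rewrite each coefficient minor by replacing the differenced row with the row of $z$-variables (legitimate because the coefficient of $x_{ij}-y_{ij}$ is independent of that row), then reorganize the double sum into the stated alternating sum over $q$, and finally check that the boundary terms $q=0$ and $q=s_2$ (or $q=s_1$) reproduce the vanishing minors. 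Once all four families are shown to lie in $\mathcal{J}$, the inclusion $\mathcal{L}\subseteq\mathcal{J}$ follows since $\mathcal{J}$ is an ideal.
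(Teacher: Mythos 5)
Your proposal is correct and takes essentially the paper's route: the minors and the Koszul forms $g_{ij,lk}$ lie in $\mathcal{J}$ by construction, and for the $f_{a_{1},...,a_{s_{1}}}$ the paper likewise substitutes $z_{ij}\mapsto x_{ij}-y_{ij}$ and invokes Lemma \ref{YtoX} so that the telescoping sum lands in $(I_{s_{1}}(X_{s_{1}t_{1}}),I_{s_{2}}(Y_{s_{2}t_{2}}))$, which is exactly your syzygy criterion for a $z$-linear form. The two small points you flag resolve at once: when $t_{2}<s_{1}$ there are no admissible column indices so the $f$'s are vacuous (the paper notes this), and the leftover boundary term is a mixed determinant whose first $s_{2}$ rows come from $Y_{s_{2}t_{2}}$, hence it lies in $I_{s_{2}}(Y_{s_{2}t_{2}})$ by Laplace expansion along those rows rather than being a pure $Y$-minor.
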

\begin{proof}
We can see $|X_{a_{1}...a_{s_{1}}}|$, $|Y_{b_{1}...b_{s_{2}}}|$,
$g_{ij,lk}$'s are in $\mathcal{J}$. Notice that when $t_{2}<s_{1}$,
by the way we define $f_{a_{1},...,a_{s_{1}}}$, this is an empty
condition. When $t_{2}\geq s_{1}$, $ $ we substitute $z_{ij}$ via
$x_{ij}-y_{ij}$ and use Lemma \ref{YtoX}, we can see $f$'s are
in $\mathcal{J}$. 
\end{proof}
The next Theorem immediate implies Theorem \ref{linear}.
\begin{thm}
\label{LIsRD}The ideal $\mathcal{L}$ is the defining ideal of $\mathcal{R}(\mathbb{D})$
and $\mathbb{D}$ is of linear type.
\end{thm}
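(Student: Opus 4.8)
The plan is to follow exactly the strategy outlined in the introduction: we already know $\mathcal{L}\subset\mathcal{J}\subset\mathcal{K}$ from Lemma \ref{defineL} (together with the obvious containment $\mathcal{J}\subset\mathcal{K}$), so it suffices to prove the reverse containment $\mathcal{K}\subset\mathcal{L}$, i.e. that the quotient $A:=k[X,Y,Z]/\mathcal{L}$ already has Krull dimension equal to $\dim\mathcal{R}(\mathbb{D})$ and is a domain (or, more directly, that the surjection $A=\mathrm{Sym}(\mathbb{D})\twoheadrightarrow\mathcal{R}(\mathbb{D})$ is an isomorphism). The mechanism for this is a Gr\"obner-basis computation: first I would fix a monomial order on $k[X,Y,Z]$ (a suitable lexicographic or block order in which the $z_{ij}$ are large and the $x_{ij}-y_{ij}$ structure is respected) and verify, via Buchberger's criterion, that the explicit generating set of $\mathcal{L}$ given in Lemma \ref{defineL} --- the maximal minors of $X_{s_1t_1}$ and $Y_{s_2t_2}$, the Pl\"ucker-type quadrics $g_{ij,lk}$, and the mixed determinants $f_{a_1\dots a_{s_1}}$ --- together with a carefully chosen enlargement of it, forms a Gr\"obner basis. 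This amounts to listing the $S$-polynomials among pairs of these generators and showing each reduces to zero; the determinantal minors already have a known Gr\"obner basis (Sturmfels--type straightening), the $g$'s reduce using the classical Pl\"ucker/Grassmann syzygies, and the genuinely new work is checking the $S$-pairs involving the $f$'s against the $g$'s and the minors.

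Once the initial ideal $\mathrm{in}(\mathcal{L})$ is known, the second step is to extract from it a non-zerodivisor modulo $\mathcal{L}$ --- concretely, a maximal minor of $X_{s_1t_1}$ that does not lie in $\mathcal{L}$ (equivalently, whose leading term is not in $\mathrm{in}(\mathcal{L})$), say $\Delta$. Because $\Delta$ is a nonzerodivisor, we may invert it and work in the localization $A_\Delta$; over this localization one row/column of the relevant determinantal locus can be solved for in terms of the others, so the defining data descends to the analogous diagonal-ideal problem for a strictly smaller matrix (with $s_1$ or $t_1$ decreased). This is the standard generic-point / inversion reduction for determinantal rings, and it lets us set up an induction on the size of the matrices: the base case is when the submatrix conditions become trivial (e.g. $s_i=t_i$, forcing a single determinant, which is a complete intersection / hypersurface situation and hence of linear type, or the degenerate cases $t_2<s_1$ handled already in Lemma \ref{defineL}). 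One must of course also check separately --- using the Gr\"obner basis to compute dimensions --- that no prime components of $V(\mathcal{L})$ are lost after inverting $\Delta$, i.e. that $\mathcal{L}$ is unmixed so that $\mathcal{L}=\mathcal{K}$ can be concluded from $\mathcal{L}_\Delta=\mathcal{K}_\Delta$ plus a height/grade count.

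Putting these together: from $\mathcal{L}_\Delta=\mathcal{K}_\Delta$ (the inductive hypothesis applied to the smaller matrix) and the fact that $\Delta$ is a nonzerodivisor on both $A=k[X,Y,Z]/\mathcal{L}$ and on $\mathcal{R}(\mathbb{D})=k[X,Y,Z]/\mathcal{K}$, we get that the kernel of $A\twoheadrightarrow\mathcal{R}(\mathbb{D})$ is a $\Delta$-torsion submodule of a ring on which $\Delta$ is regular, hence zero; therefore $\mathcal{L}=\mathcal{K}$, $\mathrm{Sym}(\mathbb{D})\cong\mathcal{R}(\mathbb{D})$, and $\mathbb{D}$ is of linear type, which also yields the explicit defining equations of $\mathcal{R}(\mathbb{D})$ as the generators of $\mathcal{L}$. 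For the regularity of $\Delta$ on $\mathcal{R}(\mathbb{D})$ one uses that $\mathcal{R}(\mathbb{D})$ is a domain (it sits inside $S[t]$ with $S$ a domain, since $R_1\otimes_k R_2$ is a domain --- determinantal rings of maximal minors are domains over a field), so any nonzero element, in particular $\Delta$, is a nonzerodivisor there; and to even know $\Delta\neq 0$ in $\mathcal{R}(\mathbb{D})$ we again invoke that its leading term is not in $\mathrm{in}(\mathcal{L})\subseteq\mathrm{in}(\mathcal{K})$.

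I expect the main obstacle to be the Gr\"obner-basis verification in the first step --- specifically, guessing the correct monomial order and the correct finite enlargement of the generating set of $\mathcal{L}$ so that Buchberger's criterion can actually be pushed through, and then organizing the $S$-polynomial reductions (particularly the $f$--$f$ and $f$--$g$ pairs, whose leading terms interact in a combinatorially intricate way) into a manageable case analysis. Everything after that --- the inversion reduction and the torsion argument --- is comparatively routine, modulo keeping careful track of the dimension/unmixedness bookkeeping.
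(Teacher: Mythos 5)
Your overall architecture matches the paper's (Gr\"obner basis for $\mathcal{L}$ via Buchberger, extraction of a regular element from the initial ideal, localization and induction on matrix size, then a torsion argument using primality of $\mathcal{K}$), but the pivotal step --- the choice of the element to invert --- is wrong as stated, and this is not a cosmetic slip. You propose $\Delta$ to be a maximal minor of $X_{s_1t_1}$ ``that does not lie in $\mathcal{L}$''; no such minor exists, since $I_{s_1}(X_{s_1t_1})$ is by definition part of the generating set of $\mathcal{L}$, so every such $\Delta$ is zero in $k[X,Y,Z]/\mathcal{L}$ and in $\mathcal{R}(\mathbb{D})$, and inverting it annihilates both rings. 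Moreover, the parenthetical ``equivalently, whose leading term is not in $\mathrm{in}(\mathcal{L})$'' is not an equivalence, and even where it applies it only yields $\Delta\neq 0$ modulo $\mathcal{L}$, not that $\Delta$ is a nonzerodivisor; regularity is the property you actually need for the torsion argument, and it does not follow from nonvanishing. The paper instead proves (Lemma \ref{NZD}) that the single \emph{variable} $x_{11}$ is a nonzerodivisor modulo $\mathcal{L}$, precisely because the Gr\"obner basis shows that no minimal generator of the monomial ideal $\mathrm{in}(\mathcal{L})$ is divisible by $x_{11}$ --- a criterion that works for a variable but not for an arbitrary polynomial such as a minor.

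A second, related gap: inverting one element tied only to the $X$-side cannot set up the induction you describe. To reduce to ``the same problem for a smaller matrix'' one must simultaneously shrink $I_{s_1}(X_{s_1t_1})$, $I_{s_2}(Y_{s_2t_2})$, and the mixed equations $f_{a_1,\dots,a_{s_1}}$ (which involve rows of $X$, $Y$, and $Z$ at once); the paper does this by inverting \emph{both} $x_{11}$ and $y_{11}$ (the $y_{11}$ case following from Lemma \ref{NZD} by symmetry) and writing down an explicit isomorphism $\overline{\phi}$ with
\[
\overline{\phi}(f'_{a_2,\dots,a_{s_1}})=\tfrac{1}{y_{11}}f_{1,a_2,\dots,a_{s_1}}-\tfrac{z_{11}}{x_{11}y_{11}}\,|X_{1a_2\dots a_{s_1}}|,
\]
to verify that the localized defining ideal $(\mathcal{L}',W)$ of the smaller Rees algebra lands inside $\mathcal{L}_{x_{11}y_{11}}$. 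Your phrase ``the defining data descends'' hides exactly this verification, and your base-case remark (``$s_i=t_i$ gives a hypersurface, hence linear type'') conflates linearity of type for $I_1$ with linearity of type for the diagonal ideal $\mathbb{D}$, which is a different ideal in a different ring. The final torsion argument in your last paragraph is fine once a genuine nonzerodivisor on $k[X,Y,Z]/\mathcal{L}$ is in hand, and your justification that $\mathcal{K}$ is prime is consistent with the paper's use of that fact; but without replacing $\Delta$ by $x_{11}$ and $y_{11}$ (with regularity deduced from the initial ideal) and without the explicit two-sided localization isomorphism, the proof as proposed does not go through.
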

The following lemma is the key step to prove the main theorem. The
proof of the lemma is given in the Section 4. It involves finding
a Gröbner basis of the ideal.
\begin{lem}
\label{NZD}The variable $x_{11}$ is a non zero-divisor of the quotient
ring $k[X,Y,Z]/\mathcal{L}$.
\end{lem}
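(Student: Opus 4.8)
The plan is to compute an explicit Gr\"obner basis of $\mathcal{L}$ for a carefully chosen monomial order on $k[X,Y,Z]$ and to arrange that no element of the resulting initial ideal $\mathrm{in}(\mathcal{L})$ is divisible by $x_{11}$. Granting this, the lemma is immediate: the standard monomials form a $k$-basis of $k[X,Y,Z]/\mathrm{in}(\mathcal{L})$, and multiplication by $x_{11}$ maps standard monomials injectively to standard monomials (a minimal generator of $\mathrm{in}(\mathcal{L})$ dividing $x_{11}\mu$ would, being prime to $x_{11}$, already divide $\mu$), so $x_{11}$ is a non zero-divisor modulo $\mathrm{in}(\mathcal{L})$. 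This transfers to $\mathcal{L}$ by the standard division argument: if $x_{11}f\in\mathcal{L}$ with $f\notin\mathcal{L}$, choose such an $f$ with $\mathrm{in}(f)$ minimal in the term order; then $x_{11}\,\mathrm{in}(f)=\mathrm{in}(x_{11}f)\in\mathrm{in}(\mathcal{L})$ forces $\mathrm{in}(f)\in\mathrm{in}(\mathcal{L})$, and subtracting from $f$ the appropriate monomial multiple of a Gr\"obner basis element yields either an element of $\mathcal{L}$ or a counterexample with strictly smaller initial term --- a contradiction either way.

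For the order I would take a block order in which every $z_{ij}$ exceeds every $x_{ij}$ and every $y_{ij}$, with $z_{11}$ the largest among the $z$'s, refined inside the $X$- and $Y$-blocks by orders making the maximal minors of $X_{s_1t_1}$ and $Y_{s_2t_2}$ lead with their anti-diagonal terms. Because $s_1,s_2\ge 2$, these anti-diagonal terms avoid the $(1,1)$ entry, so $x_{11}$ occurs in no leading term of a minor. The leading term of $g_{ij,lk}$ is the summand carrying the larger of $z_{ij},z_{lk}$; since $z_{11}$ is largest, $\mathrm{in}(g_{11,lk})$ carries $z_{11}$ rather than $x_{11}$, while for $(i,j),(l,k)\ne(1,1)$ the leading term is a $z$-variable times an $x$- or $y$-variable whose index is not $(1,1)$. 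Finally $f_{a_1,\dots,a_{s_1}}$, by its very definition, uses in its $X$-part only rows $\ge 2$ of $X$, hence contains no entry of the first row of $X$ at all; so neither it nor its leading term involves $x_{11}$, and one checks that the auxiliary generators adjoined below share this property. Verifying these leading-term assertions precisely, with three matrices of indeterminates in play, is the first, essentially bookkeeping, step.

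The candidate Gr\"obner basis $\mathcal{G}$ would consist of: the natural generators of $I_{s_1}(X_{s_1t_1})$ and of $I_{s_2}(Y_{s_2t_2})$ --- each already a Gr\"obner basis, by the classical fact that the maximal minors of a matrix of indeterminates form one; the $g_{ij,lk}$; the $f_{a_1,\dots,a_{s_1}}$; and a finite family of auxiliary polynomials of $\mathcal{L}$ produced by the S-polynomial reductions, concretely Laplace-type variants of the $f$'s obtained by promoting further rows of $X$ to rows of $Z$, together with certain products of a $g_{ij,lk}$ with a minor. One then runs Buchberger's criterion pair by pair: the product (coprime-leading-term) criterion discards almost all pairs, including every minor--minor pair --- already classical --- and most of the $g$--$g$ pairs; the chain criterion removes more; and the surviving S-polynomials, chiefly the $g$--$f$, $f$--$f$, and minor--$f$ pairs with overlapping column sets, are reduced to zero via Laplace expansion along the $Z$-row and the Pl\"ucker relations among maximal minors, with the auxiliary elements closing the reductions.

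The real obstacle, I expect, is precisely this last verification: determining exactly which auxiliary generators must be adjoined, and marshalling the determinantal identities and the case analysis over the row-type patterns of the matrices occurring in the $f$'s so that every S-polynomial reduces to zero. Once $\mathcal{G}$ is confirmed to be a Gr\"obner basis, $\mathrm{in}(\mathcal{L})$ is the monomial ideal generated by the leading terms catalogued above, none of which is a multiple of $x_{11}$, and the lemma follows as in the first paragraph.
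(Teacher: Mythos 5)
Your strategy is exactly the one the paper follows: fix a lex-type order with the $z$-variables largest (and $z_{11}$ maximal among them) so that the $X$- and $Y$-minors lead with anti-diagonal terms, compute a Gr\"obner basis of $\mathcal{L}$, check that the minimal generators of $\mathrm{in}(\mathcal{L})$ are prime to $x_{11}$, and transfer the non-zerodivisor property from $k[X,Y,Z]/\mathrm{in}(\mathcal{L})$ to $k[X,Y,Z]/\mathcal{L}$; your transfer argument in the first paragraph is correct and standard. The genuine gap is that everything of substance is deferred: you never exhibit the auxiliary elements beyond the phrase ``Laplace-type variants of the $f$'s,'' and you never carry out the S-pair reductions, which you yourself identify as ``the real obstacle.'' That obstacle \emph{is} the proof. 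In the paper it occupies all of Section 4: besides the $f^{l,k}$ of Definition \ref{f} one must adjoin the iterated families $U$, $W$, $W^{p_{1}+1,v}$, $V$, $V^{k,w}$, $H^{l,k,q}$, $I^{l,k,q}$, $^{k,w}I^{l,k,q}$ of Definitions \ref{u}--\ref{ik} (produced by repeated S-pairs against the $Y$-minors and the $g$'s), and the reductions to zero rest on the explicit determinantal identities of Lemmas \ref{YtoX}, \ref{xtox-y}, \ref{xTox_y}, \ref{topx} together with the bookkeeping device of Lemma \ref{MandM}, culminating in Theorem \ref{GB}. Asserting that some finite family ``closes the reductions'' without producing it leaves the existence of a Gr\"obner basis with the required leading terms unproved, and nothing in your outline makes that verification easier than it is in the paper.

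A smaller but real inaccuracy: it is not the case that one can arrange every element of the completed basis to have leading term prime to $x_{11}$, and your closing sentence (``none of which is a multiple of $x_{11}$'') claims exactly that. In the basis $\mathcal{G}$ of Theorem \ref{GB}, the elements $U_{1,1,1,a_{2},...,a_{s_{1}}}$, $W_{1,1,1,a_{s_{2}+1},...,a_{s_{1}},b_{1},...,b_{s_{2}}}$ and $W_{1,1,1,a_{s_{2}+1},...,a_{s_{1}},b_{1},...,b_{s_{2}},b_{2}^{'},...,b_{v}^{'}}^{2,v}$ do have leading monomials divisible by $x_{11}$ (for instance $\mathrm{in}(U_{1,1,1,a_{2},...,a_{s_{1}}})$ equals $x_{11}$ times $\mathrm{in}(f_{1,a_{2},...,a_{s_{1}}}^{1,1})$); the paper's point is that these leading monomials are divisible by those of $f^{1,1}$, $V$ and $V^{2,v}$, hence are redundant, so the \emph{minimal} generators of $\mathrm{in}(\mathcal{L})$ avoid $x_{11}$. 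Your own divisibility argument via minimal generators already accommodates this, so the fix is only to weaken the claim --- but as it stands the stronger statement is both unnecessary and, for the basis one actually obtains, false.
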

\medskip{}

\begin{flushleft}
$Proof\: of\: Theorem$ \ref{LIsRD}: From Lemma \ref{defineL}, we
have $\mathcal{L}\subset\mathcal{J}\subset\mathcal{K}$, where $\mathcal{J}$
is the defining ideal of $\mbox{Sym}(\mathbb{D})$. We would like
to show $\mathcal{L}=\mathcal{K}$ and as a consequence, $\mathcal{L}=\mathcal{J}=\mathcal{K}$,
i.e. $\mathbb{D}$ is an ideal of linear type. By Lemma \ref{NZD},
$x_{11}$ is a non zero-divisor on $k[X,Y,Z]/\mathcal{L}$. Changing
the roles of $X$ and $Y$, we also obtain that $y_{11}$ is a non-zero
divisor on $k[X,Y,Z]/\mathcal{L}$. Since $\mathcal{K}$ is a prime
ideal, it suffices to show that $\mathcal{L}_{x_{11}y_{11}}=\mathcal{K}_{x_{11}y_{11}}$.
The latter holds by induction on the size of the matrix $X$. 
\par\end{flushleft}

\begin{flushleft}
To explain this, we consider the $(m-1)\times(n-1)$ matrices of variables
$X'=[x_{ij}']$ , $Y'=[y_{ij}]$, $Z'=[z_{ij}]$, $2\leq i\leq m,$
$2\leq j\leq n$. We define a natural isomorphism $\phi$ from $k[\{x_{ij}\}_{i=1\mbox{ or }j=1},X']_{x_{11}}$
to $k[X]_{x_{11}}$ via $\phi(x_{ij})=x_{ij}$ when $i=1$ or $j=1$,
and $\phi(x_{ij}')=x_{ij}-x_{i1}x_{1j}/x_{11}$ when $i\neq1$ and
$j\neq1$. Let \[
R_{1}'=k[\{x_{ij}\}_{i=1\mbox{ or }j=1},X']_{x_{11}}/I'\cong R_{1x_{11}},\]
 \[
R_{2}'=k[\{x_{ij}\}_{i=1\mbox{ or }j=1},X']_{x_{11}}/J'\cong R_{2x_{11}},\]
 where $I'=I_{s_{1}-1}(X'_{s_{1}-1,t_{1}-1})$, $J'=I_{s_{2}-1}(X'_{s_{2}-1,t_{2}-1})$
and \begin{eqnarray*}
 &  & S':=k[\{x_{ij}\}_{i=1\mbox{ or }j=1},X',\{y_{ij}\}_{i=1\mbox{ or }j=1},Y']_{x_{11}y_{11}}/(I',J')\\
 &  & \cong R_{1}'\otimes R_{2}'\cong R_{1x_{11}}\otimes R_{2x_{11}}.\end{eqnarray*}
Then we have $ $\begin{eqnarray*}
\widetilde{\mathbb{D}}' & = & (\{x_{ij}-y_{ij}\}_{i=1\mbox{ or }j=1},\{x_{ij}'-y_{ij}'\}_{2\leq i\leq m,\mbox{ }2\leq j\leq n})\\
 & \cong & \mathbb{D}=(\{x_{ij}-y_{ij}\}_{1\leq i\leq m,\mbox{ }1\leq j\leq n}),\end{eqnarray*}
 and $T'=S'[\{z_{ij}\}_{i=1\mbox{ or }j=1},Z']\cong T_{x_{11}y_{11}}$
by the map $\overline{\phi}$ defined by $\overline{\phi}(x_{ij})=x_{ij}$,
$\overline{\phi}(y_{ij})=y_{ij}$ and $\overline{\phi}(z_{ij})=z_{ij}$
when $i=1$ or $j=1$, and $\overline{\phi}(x_{ij}')=x_{ij}-x_{i1}x_{1j}/x_{11}$,
$\overline{\phi}(y_{ij}')=y_{ij}-y_{i1}y_{1j}/y_{11}$, and $\overline{\phi}(z_{ij}')=z_{ij}-y_{i1}z_{1j}/y_{11}-x_{1j}z_{i1}/y_{11}+x_{i1}x_{1j}z_{11}/x_{11}y_{11}$
when $i\neq1$ and $j\neq1$.$ $ Let $\phi'$ denote the induced
map of $\overline{\phi}$ from $\mathcal{R}_{S'}(\widetilde{\mathbb{D}}')$
to $\mathcal{R}_{S_{x_{11}y_{11}}}(\mathbb{D})$. Let $\psi$ and
$\psi'$ denote the map from $T_{x_{11}y_{11}}$to $\mathcal{R}_{S_{x_{11}y_{11}}}(\mathbb{D})$
and $T'$ to $\mathcal{R}_{S'}(\widetilde{\mathbb{D}}')$. We obtain
the following diagram\[
\begin{array}{ccccc}
 &  & \overline{\phi}\\
 & T' & \longrightarrow & T_{x_{11}y_{11}}\\
 & \psi'\downarrow &  & \downarrow\psi\\
 & \mathcal{R}_{S'}(\mathbb{D}') & \longrightarrow & \mathcal{R}_{S_{x_{11}y_{11}}}(\mathbb{D})\\
 &  & \phi'\end{array}\]
\[
\]
To show the diagram commute, it is enough to show $ $$\phi'(\psi'(z_{ij}'))=\psi(\overline{\phi}(z_{ij}'))$.
Since \begin{eqnarray*}
\psi(\overline{\phi}(z_{ij}')) & = & \psi(z_{ij}-y_{i1}z_{1j}/y_{11}-x_{1j}z_{i1}/y_{11}+x_{i1}x_{1j}z_{11}/x_{11}y_{11})\\
 & = & x_{ij}-y_{ij}-y_{i1}(x_{1j}-y_{1j})/y_{11}-x_{1j}(x_{i1}-y_{i1})/y_{11}\\
 &  & +x_{i1}x_{1j}(x_{11}-y_{11})/x_{11}y_{11},\end{eqnarray*}
 and \begin{eqnarray*}
\phi'(\psi'(z_{ij}')) & = & \phi'(x_{ij}'-y_{ij}')\\
 & = & x_{ij}-y_{ij}-x_{i1}x_{1j}/x_{11}+y_{i1}y_{1j}/y_{11}\\
 & = & x_{ij}-y_{ij}-y_{i1}(x_{1j}-y_{1j})/y_{11}-x_{1j}(x_{i1}-y_{i1})/y_{11}\\
 &  & +x_{i1}x_{1j}(x_{11}-y_{11})/x_{11}y_{11}.\end{eqnarray*}
Hence $\phi'$ is an isomorphism. Let $\mathbb{D}'=(\{x'_{ij}-y'_{ij}\}_{2\leq i\leq m,\mbox{ }2\leq j\leq n})$
then by the induction hypothesis, the defining ideal of $\mathcal{R}_{S'}(\mathbb{D}')$
in $T'$ is of the form $\mathcal{L}'=\{I_{s_{1}-1}(X'_{s_{1}-1,t_{1}-1}),\, I_{s_{2}-1}(Y'_{s_{2}-1,t_{2}-1}),g'_{ij,lk},f'_{a_{2},...,a_{s-1}}\}$,
where \[
g'_{ij,lk}=\left|\begin{array}{cc}
z'_{ij} & z'_{lk}\\
x'_{ij}-y'_{ij} & x'_{lk}-y'_{lk}\end{array}\right|\]
 \[
f'_{a_{2},...,a_{s_{1}}}=\sum_{q=2}^{s_{2}}(-1)^{q+1}\left|\left[\begin{array}{c}
Z'^{q,q}\\
Y'^{2,q-1}\\
X'^{q+1,m}\end{array}\right]_{a_{2}...a_{s_{1}}}\right|\]
 with $2\leq a_{1}<a_{2}<...<a_{s_{1}}\leq\mathrm{min}\{t_{1},t_{2}\}$
and $2\leq i\leq m$, $2\leq l\leq m$, $2\leq j\leq n$, $2\leq k\leq n$.
Let $W$ denote the set of Koszul relations: \[
g^{1}{}_{ij,lk}=\left|\begin{array}{cc}
z{}_{ij} & z'_{lk}\\
x{}_{ij}-y{}_{ij} & x'_{lk}-y'_{lk}\end{array}\right|\]
 with $i=1$ or $j=1$ and \[
g^{2}{}_{ij,lk}=\left|\begin{array}{cc}
z{}_{ij} & z{}_{lk}\\
x{}_{ij}-y{}_{ij} & x{}_{lk}-y{}_{lk}\end{array}\right|\]
with $i=1$ or $j=1$ and $l=1$ or $k=1$. Then $(\mathcal{L}',W)$
is the defining ideal of $\mathcal{R}_{S'}(\widetilde{\mathbb{D}}')\cong\mathcal{R}_{S_{x_{11}y_{11}}}(\mathbb{D})$.
Once we show that $\overline{\phi}(\mathcal{L}',W)\subset\mathcal{L}_{x_{11}y_{11}}$,
then $\mathcal{L}_{x_{11}y_{11}}=\mathcal{K}_{x_{11}y_{11}}$.
\par\end{flushleft}

From the way we define the map $\overline{\phi}$, we have \[
\overline{\phi}(I_{s_{1}-1}(X'_{s_{1}-1,t_{1}-1}),\, I_{s_{2}-1}(Y'_{s_{2}-1,t_{2}-1}),g'_{ij,lk},W)\subset\mathcal{L}_{x_{11}y_{11}}.\]
 Notice the following equality:\begin{eqnarray*}
\overline{\phi}(f'_{a_{2},...,a_{s_{1}}}) & = & \frac{1}{y_{11}}f{}_{1,a_{2},...,a_{s_{1}}}-\frac{z_{11}}{x_{11}y_{11}}|X_{1a_{2}...a_{s_{1}}}|,\end{eqnarray*}
hence $\overline{\phi}(f'_{a_{2},...,a_{s_{1}}})\in\mathcal{L}_{x_{11}y_{11}}$.

\begin{flushleft}
\hfill{}$\square$
\par\end{flushleft}

\section{SOME LINEAR ALGEBRA}

This section is a reminder of some linear algebra properties. We will
use those properties in the proof of Section 4.

The following lemma writes the determinant of a certain in $x$ and
$y$ variables in term of $y$ variables and differences $x_{ij}-y_{ij}$.
\begin{lem}
\label{xtox-y}With notation as \ref{SubMatrix}, \[
\left|\begin{array}{cccccc}
 &  & Y_{1,...,n}^{1,i-1}\\
y_{i,1} & ... & y_{i,j} & x_{i,j+1} & ... & x_{i,n}\\
 &  & X_{1,...,n}^{i+1,n}\end{array}\right|\]
\textup{\begin{eqnarray*}
 & = & |Y|+{\displaystyle {\displaystyle \sum_{k=j+1}^{n}}(-1)^{i+k}\left|\begin{array}{c}
Y_{1..\hat{k}..n}^{1,i-1}\\
X_{1...\hat{k}...n}^{i+1,n}\end{array}\right|(x_{ik}-y_{ik})}\\
 &  & +{\displaystyle \sum_{l=i+1}^{n}{\displaystyle \sum_{k=1}^{n}(-1)^{l+k}\left|\begin{array}{c}
Y_{1...\hat{k}...n}^{1,l-1}\\
X_{1...\hat{k}...n}^{l+1,n}\end{array}\right|(x_{lk}-y_{lk}})}.\end{eqnarray*}
}\end{lem}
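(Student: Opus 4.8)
\emph{Proof proposal.} The plan is to realize the matrix on the left-hand side, call it $M$, as the last term of a chain of single-row modifications of the full matrix $Y=[y_{ij}]$, and to apply multilinearity of the determinant at each step. Note first that $M$ agrees with $Y$ in rows $1,\dots,i-1$, agrees with $X=[x_{ij}]$ in rows $i+1,\dots,n$, and in row $i$ carries the entries $y_{i,1},\dots,y_{i,j}$ in the first $j$ columns and $x_{i,j+1},\dots,x_{i,n}$ in the remaining columns.

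First I would build the chain from the bottom up. Put $N_{n}:=Y$; for $l=n,n-1,\dots,i+1$ let $N_{l-1}$ be obtained from $N_{l}$ by replacing its $l$-th row $(y_{l,1},\dots,y_{l,n})$ with $(x_{l,1},\dots,x_{l,n})$; finally let $N'$ be obtained from $N_{i}$ by replacing the entries in positions $(i,j+1),\dots,(i,n)$ with $x_{i,j+1},\dots,x_{i,n}$, so that $N'=M$. By construction $N_{l}$ has rows $1,\dots,l-1$ equal to those of $Y$ and rows $l+1,\dots,n$ equal to those of $X$ (its $l$-th row, which is about to be changed, plays no role below), and likewise $N_{i}$ has rows $1,\dots,i-1$ from $Y$ and rows $i+1,\dots,n$ from $X$.

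Then I would compare consecutive determinants. Since $N_{l-1}$ and $N_{l}$ differ only in row $l$, the difference $|N_{l-1}|-|N_{l}|$ equals the determinant of $N_{l}$ with its $l$-th row replaced by $(x_{l,1}-y_{l,1},\dots,x_{l,n}-y_{l,n})$; expanding along row $l$, this is $\sum_{k=1}^{n}(-1)^{l+k}(x_{lk}-y_{lk})$ times the minor of $N_{l}$ deleting row $l$ and column $k$, and by the previous paragraph that minor is exactly $\left|\begin{array}{c}Y_{1...\hat{k}...n}^{1,l-1}\\ X_{1...\hat{k}...n}^{l+1,n}\end{array}\right|$. Summing over $l=n,\dots,i+1$ produces the double sum in the statement. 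The final step $|M|-|N_{i}|$ is handled the same way, except that now only the entries in columns $j+1,\dots,n$ of row $i$ change, so the $k$-sum runs from $j+1$ to $n$ and yields the single sum in the statement. Adding all of these to $|N_{n}|=|Y|$ gives the asserted identity.

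This is essentially bookkeeping: the points needing attention are the cofactor signs $(-1)^{l+k}$ and the degenerate cases $i=n$ or $j=n$, in which one or both of the sums on the right-hand side are empty (and the empty minors that arise are read off via the conventions of Definition \ref{SubMatrix}); I do not expect any genuine obstacle. One could alternatively deduce the identity from Lemma \ref{YtoX} applied to suitably chosen matrices, but the row-by-row multilinearity sketched above seems the most transparent route.
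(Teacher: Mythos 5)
Your proposal is correct, and it takes a recognizably different route from the paper. The paper proves the lemma by induction on $i$: it expands the determinant along row $i$, separates the $y$-entries from the $x$-entries, invokes Lemma \ref{YtoX} in the base case (and the inductive hypothesis in the general step), and then recombines the resulting sums, which forces an add-and-subtract cancellation between the $\sum_{k\leq j}(y_{ik}-x_{ik})$ terms and the row-$i$ part of the correction sum. Your argument replaces all of this by a single bottom-up telescoping chain $Y=N_{n},N_{n-1},\dots,N_{i},M$ of one-row modifications: at each stage the two matrices differ in exactly one row, so linearity of the determinant in that row plus cofactor expansion of the difference row produces precisely one block of the asserted right-hand side, with the mixed minors (rows $1,\dots,l-1$ from $Y$, rows $l+1,\dots,n$ from $X$, column $k$ deleted) appearing directly and the signs $(-1)^{l+k}$ coming straight from the cofactor expansion — no cancellation bookkeeping is needed. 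What this buys is brevity and transparency, and it also makes Lemma \ref{YtoX} superfluous as an input: your chain with $j=0$ and $i=1$ (row $i$ entirely replaced) reproves that lemma as a special case, whereas the paper must establish it first by a separate induction. The only points requiring care are the ones you already flag — the cofactor signs and the degenerate cases $i=n$ or $j=n$, where one or both sums are empty — and these cause no difficulty.
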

\begin{proof}
We will show this by inducting on $i$. When $i=1$, we have\begin{eqnarray*}
 & \left|\begin{array}{c}
y_{1,1}...y_{1,j}x_{1,j+1}..x_{1,n}\\
X^{2,n}\end{array}\right|\\
= & {\displaystyle \sum_{k=1}^{j}}(-1)^{k+1}y_{1,k}|X_{1...\hat{k}...n}^{2,n}|+{\displaystyle \sum_{k=j+1}^{n}}(-1)^{k+1}x_{1,k}|X_{1...\hat{k}...n}^{2,n}|\end{eqnarray*}
\begin{eqnarray*}
= & {\displaystyle \sum_{k=1}^{j}}(-1)^{k+1}(y_{1,k}-x_{1,k})|X_{1...\hat{k}...n}^{2,n}|+{\displaystyle \sum_{k=1}^{n}}(-1)^{k+1}x_{1,k}|X_{1...\hat{k}...n}^{2,n}|\\
= & {\displaystyle \sum_{k=1}^{j}}(-1)^{k+1}(y_{1,k}-x_{1,k})|X_{1...\hat{k}...n}^{2,n}|+|X| & .\end{eqnarray*}
Now Lemma \ref{YtoX} gives \[
|X|=|Y|+{\displaystyle \sum_{l=1}^{n}\sum_{j=1}^{n}(-1)^{l+j}\left|\begin{array}{c}
Y_{1...\hat{j}...n}^{1,l-1}\\
X_{1...\hat{j}...n}^{l+1,n}\end{array}\right|(x_{lj}-y_{lj})}.\]
 Hence we have \begin{eqnarray*}
 & \left|\begin{array}{c}
y_{1,1}...y_{1,j}x_{1,j+1}..x_{1,n}\\
X^{2,n}\end{array}\right|\\
= & {\displaystyle \sum_{k=1}^{j}}(-1)^{k+1}(y_{1k}-x_{1k})|X_{1...\hat{k}...n}^{2,n}|+|Y|+{\displaystyle \sum_{l=1}^{n}\sum_{j=1}^{n}(-1)^{l+j}\left|\begin{array}{c}
Y_{1...\hat{j}...n}^{1,l-1}\\
X_{1...\hat{j}...n}^{l+1,n}\end{array}\right|(x_{lj}-y_{lj})}.\end{eqnarray*}
 Notice that\[
{\displaystyle \sum_{k=1}^{j}(-1)^{k+1}(y_{1k}-x_{1k})|X_{1...\hat{k}...n}^{2,n}|+\sum_{l=1}^{n}\sum_{j=1}^{n}(-1)^{l+j}\left|\begin{array}{c}
Y_{1...\hat{j}...n}^{1,l-1}\\
X_{1...\hat{j}...n}^{l+1,n}\end{array}\right|(x_{lj}-y_{lj})}\]
\begin{eqnarray*}
 & = & \sum_{k=1}^{j}(-1)^{k+1}(y_{1k}-x_{1k})|X_{1...\hat{k}...n}^{2,n}|+\sum_{j=1}^{n}(-1)^{j+1}|X_{1...\hat{j}...n}^{2,n}|(x_{1j}-y_{1j})\\
 & {\displaystyle } & +{\displaystyle \sum_{l=2}^{n}\sum_{j=1}^{n}(-1)^{l+j}\left|\begin{array}{c}
Y_{1...\hat{j}...n}^{1,l-1}\\
X_{1...\hat{j}...n}^{l+1,n}\end{array}\right|(x_{lj}-y_{lj})}\end{eqnarray*}
\begin{eqnarray*}
= & {\displaystyle \sum_{k=j+1}^{n}}(-1)^{k+1}(x_{1k}-y_{1k})|X_{1...\hat{k}...n}^{2,n}|+{\displaystyle \sum_{l=2}^{n}\sum_{j=1}^{n}}(-1)^{l+j}\left|\begin{array}{c}
Y_{1...\hat{j}...n}^{1,l-1}\\
X_{1...\hat{j}...n}^{l+1,n}\end{array}\right|(x_{lj}-y_{lj}) & .\end{eqnarray*}
Hence \begin{eqnarray*}
\left|\begin{array}{c}
y_{1,1}...y_{1,j}x_{1,j+1}..x_{1,n}\\
X^{2,n}\end{array}\right| & = & |Y|+\sum_{k=j+1}^{n}(-1)^{k+1}(x_{1,k}-y_{1,k})|X_{1...\hat{k}...n}^{2,n}|\\
 &  & +\sum_{l=2}^{n}\sum_{j=1}^{n}(-1)^{l+j}\left|\begin{array}{c}
Y_{1...\hat{j}...n}^{1,l-1}\\
X_{1...\hat{j}...n}^{l+1,n}\end{array}\right|(x_{lj}-y_{lj}).\end{eqnarray*}
Now for the induction step, we assume \[
\left|\begin{array}{c}
Y^{1,i-1}\\
X^{i,n}\end{array}\right|=|Y|+\sum_{l=i}^{n}\sum_{j=1}^{n}(-1)^{l+j}\left|\begin{array}{c}
Y_{1...\hat{j}...n}^{1,l-1}\\
X_{1...\hat{j}...n}^{l+1,n}\end{array}\right|(x_{lj}-y_{lj}).\]
Therefore \[
\begin{array}{ccc}
 & \left|\begin{array}{c}
Y^{1,i-1}\\
y_{i,1}...y_{i,j}x_{i,j+1}..x_{i,n}\\
X^{i+1,n}\end{array}\right|\\
= & {\displaystyle \sum_{k=1}^{j}}(-1)^{i+k}y_{i,k}\left|\begin{array}{c}
Y_{1...\hat{k}...n}^{1,i-1}\\
X_{1...\hat{k}...n}^{i+1,n}\end{array}\right|+{\displaystyle \sum_{k=j+1}^{n}}(-1)^{i+k}x_{i,k}\left|\begin{array}{c}
Y_{1...\hat{k}...n}^{1,i-1}\\
X_{1...\hat{k}...n}^{i+1,n}\end{array}\right|\\
= & {\displaystyle \sum_{k=1}^{j}}(-1)^{i+k}(y_{i,k}-x_{i,k})\left|\begin{array}{c}
Y_{1...\hat{k}...n}^{1,i-1}\\
X_{1...\hat{k}...n}^{i+1,n}\end{array}\right|+{\displaystyle \sum_{k=1}^{n}}(-1)^{i+k}x_{i,k}\left|\begin{array}{c}
Y_{1...\hat{k}...n}^{1,i-1}\\
X_{1...\hat{k}...n}^{i+1,n}\end{array}\right|\end{array}\]
\[
={\displaystyle \sum_{k=1}^{j}}(-1)^{i+k}(y_{i,k}-x_{i,k})\left|\begin{array}{c}
Y_{1...\hat{k}...n}^{1,i-1}\\
X_{1...\hat{k}...n}^{i+1,n}\end{array}\right|+\left|\begin{array}{c}
Y^{1,i-1}\\
X^{i,n}\end{array}\right|\]
\begin{eqnarray*}
= & {\displaystyle \sum_{k=1}^{j}}(-1)^{i+k}(y_{i,k}-x_{i,k})\left|\begin{array}{c}
Y_{1...\hat{k}...n}^{1,i-1}\\
X_{1...\hat{k}...n}^{i+1,n}\end{array}\right|+|Y|\\
 & +{\displaystyle \sum_{l=i}^{n}\sum_{j=1}^{n}}(-1)^{l+j}\left|\begin{array}{c}
Y_{1...\hat{j}...n}^{1,l-1}\\
X_{1...\hat{j}...n}^{l+1,n}\end{array}\right|(x_{lj}-y_{lj})\end{eqnarray*}
\begin{eqnarray*}
 & = & |Y|+\sum_{k=j+1}^{n}(-1)^{i+k}(x_{i,k}-y_{i,k})\left|\begin{array}{c}
Y_{1...\hat{k}...n}^{1,i-1}\\
X_{1...\hat{k}...n}^{i+1,n}\end{array}\right|\\
 &  & +\sum_{l=i+1}^{n}\sum_{j=1}^{n}(-1)^{l+j}\left|\begin{array}{c}
Y_{1...\hat{j}...n}^{1,l-1}\\
X_{1...\hat{j}...n}^{l+1,n}\end{array}\right|(x_{lj}-y_{lj}).\end{eqnarray*}

\end{proof}
By the following the two lemmas, the $S$-pairs between the elements
of the ideal $\mathcal{L}$ can be reduced using the Koszul relations. 
\begin{lem}
\label{g_ij} Let $1\leq i,\: l\leq m$, $1\leq j,\: k\leq n$, $a_{1}<a_{2}<a_{3}$.
Let \[
g_{ij,lk}=\left|\begin{array}{cc}
z_{ij} & z_{lk}\\
x_{ij}-y_{ij} & x_{lk}-y_{lk}\end{array}\right|,\, M=\left|\begin{array}{ccc}
z_{1a_{1}} & z_{1a_{2}} & z_{1a_{3}}\\
x_{1a_{1}} & x_{1a_{2}} & x_{1a_{3}}\\
y_{1a_{1}} & y_{1a_{2}} & y_{1a_{3}}\end{array}\right|.\]
 Then \[
M=y_{1a_{1}}g_{1a_{2},1a_{3}}-y_{1a_{2}}g_{1a_{1},1a_{3}}+y_{1a_{3}}g_{1a_{1},1a_{2}}.\]
\end{lem}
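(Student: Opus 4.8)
The claim is a purely formal $3\times 3$ determinant identity, so the plan is to expand both sides and match terms. First I would expand $M$ along its first row by the variables $z_{1a_1}, z_{1a_2}, z_{1a_3}$:
\[
M = z_{1a_1}\left|\begin{array}{cc} x_{1a_2} & x_{1a_3}\\ y_{1a_2} & y_{1a_3}\end{array}\right|
  - z_{1a_2}\left|\begin{array}{cc} x_{1a_1} & x_{1a_3}\\ y_{1a_1} & y_{1a_3}\end{array}\right|
  + z_{1a_3}\left|\begin{array}{cc} x_{1a_1} & x_{1a_2}\\ y_{1a_1} & y_{1a_2}\end{array}\right|.
\]
On the other side, each $g_{1a_p,1a_q} = z_{1a_p}(x_{1a_q}-y_{1a_q}) - z_{1a_q}(x_{1a_p}-y_{1a_p})$, so I would substitute these into $y_{1a_1}g_{1a_2,1a_3} - y_{1a_2}g_{1a_1,1a_3} + y_{1a_3}g_{1a_1,1a_2}$ and collect the coefficient of each $z_{1a_p}$.

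The key observation driving the proof is that in the combination $y_{1a_1}g_{1a_2,1a_3}-y_{1a_2}g_{1a_1,1a_3}+y_{1a_3}g_{1a_1,1a_2}$, the $z$-coefficients come out as differences of $x$'s and $y$'s paired against $y$'s, and the pure-$y$ parts cancel by antisymmetry. Concretely, the coefficient of $z_{1a_1}$ on the right-hand side is $-y_{1a_2}(x_{1a_3}-y_{1a_3}) + y_{1a_3}(x_{1a_2}-y_{1a_2}) = -y_{1a_2}x_{1a_3}+y_{1a_3}x_{1a_2}$ (the $y_{1a_2}y_{1a_3}$ terms cancel), which is exactly $x_{1a_2}y_{1a_3}-x_{1a_3}y_{1a_2} = \left|\begin{smallmatrix} x_{1a_2} & x_{1a_3}\\ y_{1a_2} & y_{1a_3}\end{smallmatrix}\right|$, matching the coefficient of $z_{1a_1}$ in the expansion of $M$. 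The same cancellation of the quadratic-in-$y$ terms handles the coefficients of $z_{1a_2}$ and $z_{1a_3}$ by the identical computation with indices permuted, and the signs $(+,-,+)$ line up with the Laplace expansion. Alternatively, and perhaps more cleanly, one can note that $M$ equals the determinant obtained by replacing the third row $y_{1a_j}$ by $(y_{1a_j}) - (x_{1a_j})$ (row operation $R_3 \mapsto R_3 - R_2$, which does not change the determinant), giving $\left|\begin{smallmatrix} z_{1a_1} & z_{1a_2} & z_{1a_3}\\ x_{1a_1} & x_{1a_2} & x_{1a_3}\\ y_{1a_1}-x_{1a_1} & y_{1a_2}-x_{1a_2} & y_{1a_3}-x_{1a_3}\end{smallmatrix}\right|$, and then expanding along the third row yields precisely $-\big(y_{1a_1}-x_{1a_1}\big)g'_{23} + \cdots$; after fixing signs this is the stated formula (up to the harmless sign swap $x-y \leftrightarrow y-x$ inside each $2\times 2$ minor, which is absorbed by the overall sign).

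There is essentially no obstacle here: the statement is an elementary consequence of multilinearity and the Laplace expansion, and the only thing to be careful about is bookkeeping of signs — in particular making sure the convention for $g_{ij,lk}$ (which has the $z$-row on top and the $x-y$ row on the bottom) produces the signs $(+y_{1a_1}, -y_{1a_2}, +y_{1a_3})$ as claimed rather than the opposite. I would verify this by checking the single coefficient of $z_{1a_1}$ as above and then invoke symmetry of the argument for the remaining two. The row-operation argument is the cleanest route and is the one I would write up.
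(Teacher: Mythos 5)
Your first argument (expand $M$ along the $z$-row, substitute the definitions of the $g$'s on the right-hand side, and match the coefficient of each $z_{1a_p}$, noting that the quadratic $y$-terms cancel) is correct and complete, and it is essentially the same elementary verification the paper carries out, just organized as a coefficient comparison rather than in one determinantal step. The paper's proof is the row-operation version, but with the operation done the other way around from what you propose: it replaces the second row by $x-y$ (i.e.\ $R_2\mapsto R_2-R_3$), so that
\[
M=\left|\begin{array}{ccc}
z_{1a_{1}} & z_{1a_{2}} & z_{1a_{3}}\\
x_{1a_{1}}-y_{1a_{1}} & x_{1a_{2}}-y_{1a_{2}} & x_{1a_{3}}-y_{1a_{3}}\\
y_{1a_{1}} & y_{1a_{2}} & y_{1a_{3}}\end{array}\right|,
\]
and then expands along the untouched third row; the complementary $2\times2$ minors are literally the $g_{1a_p,1a_q}$ and the cofactor signs give $+y_{1a_1},-y_{1a_2},+y_{1a_3}$ at once. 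Your ``cleanest route,'' $R_3\mapsto R_3-R_2$ followed by expansion along the new third row, does not do this: the surviving minors then have rows $z$ and $x$ (not $z$ and $x-y$), and the coefficients are $y_{1a_j}-x_{1a_j}$ rather than $y_{1a_j}$, so the resulting identity is a different (true) one, and it cannot be turned into the stated formula merely by ``fixing signs'' or swapping $x-y\leftrightarrow y-x$; extra rewriting would be needed. So if you write up the row-operation version, subtract the $y$-row from the $x$-row and expand along the $y$-row; otherwise your coefficient-matching argument already suffices.
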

\begin{proof}
One has \begin{eqnarray*}
M & = & \left|\begin{array}{ccc}
z_{1a_{1}} & z_{1a_{2}} & z_{1a_{3}}\\
x_{1a_{1}}-y_{1a_{1}} & x_{1a_{2}}-y_{1a_{2}} & x_{1a_{3}}-y_{1a_{3}}\\
y_{1a_{1}} & y_{1a_{2}} & y_{1a_{3}}\end{array}\right|\end{eqnarray*}
\begin{eqnarray*}
 & = & y_{1a_{1}}\left|\begin{array}{cc}
z_{1a_{2}} & z_{1a_{3}}\\
x_{1a_{2}}-y_{1a_{2}} & x_{1a_{3}}-y_{1a_{3}}\end{array}\right|-y_{1a_{2}}\left|\begin{array}{cc}
z_{1a_{1}} & z_{1a_{3}}\\
x_{1a_{1}}-y_{a_{1}} & x_{1a_{3}}-y_{1a_{3}}\end{array}\right|\\
 &  & +y_{1a_{3}}\left|\begin{array}{cc}
z_{1a_{1}} & z_{1a_{2}}\\
x_{1a_{1}}-y_{1a_{1}} & x_{1a_{2}}-y_{1a_{2}}\end{array}\right|\end{eqnarray*}
\end{proof}
\begin{lem}
\label{Swithchg_ij} Let $g_{ij,lk}$ defined as Lemma \ref{g_ij}.
Then \begin{eqnarray*}
M & = & \left|\begin{array}{cc}
z_{1a_{1}} & z_{1a_{2}}\\
x_{2a_{1}}-y_{2a_{1}} & x_{2a_{2}}-y_{2a_{2}}\end{array}\right|\\
 & = & g_{1a_{1},2a_{2}}-g_{1a_{2},2a_{1}}+\left|\begin{array}{cc}
x_{1a_{1}}-y_{1a_{1}} & x_{1a_{2}}-y_{1a_{2}}\\
z_{2a_{1}} & z_{2a_{2}}\end{array}\right|.\end{eqnarray*}
\end{lem}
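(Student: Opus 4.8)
The plan is to verify the identity by a direct bilinear expansion, in the same spirit as Lemma \ref{g_ij}. First I would record the explicit formula for the Koszul-type generator defined in Lemma \ref{defineL}, namely $g_{ij,lk}=z_{ij}(x_{lk}-y_{lk})-z_{lk}(x_{ij}-y_{ij})$. Applying this to the two index pairs occurring on the right-hand side gives
$$g_{1a_1,2a_2}=z_{1a_1}(x_{2a_2}-y_{2a_2})-z_{2a_2}(x_{1a_1}-y_{1a_1}),$$
$$g_{1a_2,2a_1}=z_{1a_2}(x_{2a_1}-y_{2a_1})-z_{2a_1}(x_{1a_2}-y_{1a_2}).$$

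Next I would subtract these two expressions. The two terms $z_{1a_1}(x_{2a_2}-y_{2a_2})$ and $-z_{1a_2}(x_{2a_1}-y_{2a_1})$ assemble exactly into $M$, while the leftover terms $-z_{2a_2}(x_{1a_1}-y_{1a_1})+z_{2a_1}(x_{1a_2}-y_{1a_2})$ are the negative of the last $2\times 2$ determinant in the statement. Rearranging then gives $M=g_{1a_1,2a_2}-g_{1a_2,2a_1}+\left|\begin{array}{cc} x_{1a_1}-y_{1a_1} & x_{1a_2}-y_{1a_2}\\ z_{2a_1} & z_{2a_2}\end{array}\right|$, as claimed. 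In compact form, if $N_{uv}$ denotes the relevant $z$-entry and $D_{uv}$ the relevant difference $x-y$, the whole content is the elementary cancellation $N_{11}D_{22}-N_{12}D_{21}=(N_{11}D_{22}-N_{22}D_{11})-(N_{12}D_{21}-N_{21}D_{12})+(D_{11}N_{22}-D_{12}N_{21})$, in which the two off-diagonal products $N_{22}D_{11}$ and $N_{21}D_{12}$ cancel in pairs.

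I do not expect a genuine obstacle here; the only thing to watch is sign bookkeeping, since the four monomials $z_{\ast}(x_{\ast}-y_{\ast})$ not lying on the "main diagonal" must cancel pairwise. One could alternatively argue by multilinearity of the $2\times 2$ determinant, splitting each column of $M$ into its row-$1$ and row-$2$ parts, but that merely re-derives $M$ without producing the $g$'s, so the direct expansion above is the cleaner route. The role of this lemma, together with Lemma \ref{g_ij}, is that it lets one rewrite the "mismatched" $2\times 2$ minors that appear when forming $S$-polynomials among the generators of $\mathcal{L}$ as $k[X,Y,Z]$-linear combinations of the chosen Koszul relations $g_{ij,lk}$; this is precisely what is needed to run Buchberger's algorithm in the proof of Lemma \ref{NZD}.
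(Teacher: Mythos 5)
Your proposal is correct, and it is essentially the paper's own argument: the paper likewise expands $M=z_{1a_1}(x_{2a_2}-y_{2a_2})-z_{1a_2}(x_{2a_1}-y_{2a_1})$ and trades each monomial for the corresponding $g$ plus the leftover $z_{2\ast}(x_{1\ast}-y_{1\ast})$ terms, which assemble into the final $2\times 2$ determinant. Your sign bookkeeping checks out, so there is nothing to add.
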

\begin{proof}
We have \begin{eqnarray*}
M & = & z_{1a_{1}}(x_{2a_{2}}-y_{2a_{2}})-z_{1a_{2}}(x_{2a_{1}}-y_{2a_{1}})\\
 & = & g_{1a_{1},2a_{2}}+z_{2a_{2}}(x_{1a_{1}}-y_{1a_{1}})-g_{1a_{2},2a_{1}}-z_{2a_{1}}(x_{1a_{2}}-y_{1a_{2}})\\
 & = & g_{1a_{1},2a_{2}}-g_{1a_{2},2a_{1}}+\left|\left[\begin{array}{cc}
x_{1a_{1}}-y_{1a_{1}} & x_{1a_{2}}-y_{1a_{2}}\\
z_{2a_{1}} & z_{2a_{2}}\end{array}\right]\right|.\end{eqnarray*}

\end{proof}

\section{Gröbner basis}

This section is devoted to prove Lemma \ref{NZD}. We outline this
section here. We will recall Buchberger's Criterion and give several
lemmas that will help us reduce the computation of $S$-pairs between
elements of $\mathcal{L}$. We define several equations and show those
equations sit inside the ideal $\mathcal{L}$. Theorem \ref{GB} will
give a Gröbner basis of $\mathcal{L}$ via a particular ordering.
Actually, all the equations defined before Theorem \ref{GB} are all
of elements of the Gröbner basis. The proof of Theorem \ref{GB} will
be broken down as several lemmas computing the $S$-pairs of the elements
and showing all of the reminders of $S$-pairs are zero. 

Let $I=(g_{1},...,g_{s})$ be an ideal in a polynomial ring. We define
\[
\mbox{in}(g_{j})/\mathrm{GCD}(\mbox{in}(g_{i}),\mbox{in}(g_{j}))=m_{ji},\]
 \[
\mbox{in}(g_{i})/\mathrm{GCD}(\mbox{in}(g_{i}),\mbox{in}(g_{j}))=m_{ij},\]
 and \[
m_{ji}g_{i}-m_{ij}g_{j}=\sum f_{u}^{(ij)}g_{u}+h_{g_{i}g_{j}}\]
 where $\mbox{in}(m_{ji}g_{i})>\mbox{in}(f_{u}^{(ij)}g_{u})$ for
all $u$.
\begin{thm}
$\mathrm{(Buchberger's\: Criterion)}$. The elements $g_{1},...,g_{s}$
form a Gröbner basis if and only if $h_{g_{i}g_{j}}=0$ for all $i$
and $j$.
\end{thm}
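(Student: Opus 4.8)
The plan is to treat the two implications separately: the forward direction (``Gr\"obner basis $\Rightarrow$ all $h_{g_{i}g_{j}}=0$'') is essentially a one-line consequence of the division algorithm, while the converse carries all the content. For the forward direction, suppose $g_{1},\dots,g_{s}$ form a Gr\"obner basis, i.e.\ $\mathrm{in}(I)=(\mathrm{in}(g_{1}),\dots,\mathrm{in}(g_{s}))$. Since $m_{ji}g_{i}-m_{ij}g_{j}\in I$, dividing it by $g_{1},\dots,g_{s}$ produces a remainder that lies in $I$ and has no term divisible by any $\mathrm{in}(g_{u})$, hence is $0$; moreover, because the terms of weight $\mathrm{lcm}(\mathrm{in}(g_{i}),\mathrm{in}(g_{j}))=\mathrm{in}(m_{ji}g_{i})$ already cancel in $m_{ji}g_{i}-m_{ij}g_{j}$, the quotients $f_{u}^{(ij)}$ coming out of the division automatically satisfy $\mathrm{in}(f_{u}^{(ij)}g_{u})<\mathrm{in}(m_{ji}g_{i})$, so this is exactly the expression in the statement and $h_{g_{i}g_{j}}=0$.

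For the converse, assume $h_{g_{i}g_{j}}=0$ for all $i,j$; rescaling (which changes neither $I$, nor the Gr\"obner basis property, nor the vanishing of the $h_{g_{i}g_{j}}$) I may assume each $g_{i}$ is monic. It suffices to prove $\mathrm{in}(f)\in(\mathrm{in}(g_{1}),\dots,\mathrm{in}(g_{s}))$ for every $0\neq f\in I$. Among all expressions $f=\sum_{i=1}^{s}h_{i}g_{i}$, I would pick one minimizing $\delta:=\max_{i}\mathrm{in}(h_{i})\mathrm{in}(g_{i})$ in the monomial order, which is possible since a monomial order well-orders the monomials. One always has $\mathrm{in}(f)\le\delta$, and if $\mathrm{in}(f)=\delta$ then $\mathrm{in}(f)=\mathrm{in}(h_{i})\mathrm{in}(g_{i})$ for some $i$, so $\mathrm{in}(g_{i})\mid\mathrm{in}(f)$ and we are done. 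The remaining task is to derive a contradiction from $\mathrm{in}(f)<\delta$.

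So suppose $\mathrm{in}(f)<\delta$, set $P=\{i:\mathrm{in}(h_{i}g_{i})=\delta\}$ (so $|P|\ge 2$), and write $\mathrm{in}(h_{i})=c_{i}x^{\alpha_{i}}$ for $i\in P$; then $\sum_{i\in P}c_{i}=0$, this sum being the coefficient of $\delta$ in $f$. The key move is to rewrite $\sum_{i\in P}c_{i}x^{\alpha_{i}}g_{i}$ by telescoping along an ordering of $P$: for consecutive $i,j\in P$ one has $x^{\alpha_{i}}g_{i}-x^{\alpha_{j}}g_{j}=\frac{\delta}{\mathrm{lcm}(\mathrm{in}(g_{i}),\mathrm{in}(g_{j}))}(m_{ji}g_{i}-m_{ij}g_{j})$, a genuine monomial multiple of an $S$-pair because $\delta$ is a common multiple of $\mathrm{in}(g_{i})$ and $\mathrm{in}(g_{j})$, and the ``boundary'' term left over by the telescoping vanishes because it carries the factor $\sum_{i\in P}c_{i}=0$. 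Feeding in the hypothesis $m_{ji}g_{i}-m_{ij}g_{j}=\sum_{u}f_{u}^{(ij)}g_{u}$ with $\mathrm{in}(f_{u}^{(ij)}g_{u})<\mathrm{lcm}(\mathrm{in}(g_{i}),\mathrm{in}(g_{j}))$ and multiplying through by the monomials above, one rewrites $\sum_{i\in P}c_{i}x^{\alpha_{i}}g_{i}$ as a combination $\sum_{u}(\cdots)g_{u}$ in which every initial term is strictly below $\delta$. Combining this with $f=\sum_{i\notin P}h_{i}g_{i}+\sum_{i\in P}(h_{i}-\mathrm{in}(h_{i}))g_{i}+\sum_{i\in P}\mathrm{in}(h_{i})g_{i}$ produces a new expression $f=\sum_{u}h'_{u}g_{u}$ with $\max_{u}\mathrm{in}(h'_{u}g_{u})<\delta$, contradicting the minimality of $\delta$. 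Hence $\mathrm{in}(f)=\delta$, and the proof is complete.

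I expect the main obstacle to be this telescoping/substitution step: one has to package the cancellation of the equal leading monomials among $\sum_{i\in P}\mathrm{in}(h_{i})g_{i}$ as a monomial combination of the $S$-pairs $m_{ji}g_{i}-m_{ij}g_{j}$ with strictly smaller leading behaviour (the standard ``syzygies of monomials'' lemma), and then check carefully that once the standard representations of those $S$-pairs are inserted, the maximal monomial genuinely drops below $\delta$. Everything else — the well-ordering descent, the reduction to monic $g_{i}$, and the two bookkeeping inclusions — is routine.
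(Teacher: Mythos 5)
Your proof is correct: the forward direction via the division algorithm and the converse via the minimal-$\delta$ representation, the syzygy/telescoping rewriting of $\sum_{i\in P}c_{i}x^{\alpha_{i}}g_{i}$ into monomial multiples of $S$-pairs, and the descent contradiction constitute the standard proof of Buchberger's Criterion. The paper states this theorem as a known classical result without giving any proof, so there is nothing to compare against; your argument is the usual textbook one (and the bookkeeping points you flag -- monic reduction, the vanishing boundary term, and the strict drop below $\delta$ after substituting the standard expressions of the $S$-pairs -- are all handled correctly).
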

The polynomial $m_{ji}g_{i}-m_{ij}g_{j}$ is commonly referred to
as the $S$-pair between $g_{i}$ and $g_{j}$ and $h_{g_{i}g_{j}}$
is called the remainder. 

By using Buchberger's Criterion, we obtain several lemmas that will
help in the computation of a Gröbner basis of $\mathcal{L}$. Sine
we focus on the determinantal rings, the computation of $S$-pair
between elements are involving the values of matrix determinate. For
the computation purpose, we define the following definition. 
\begin{defn}
\label{M_ij}Given two square free monomials $p_{1}$ and $p_{2}$
in $k[X]$ where $X$ is the $m$ by $n$ matrix of variables, we
define $m_{12}=p_{1}/\mbox{GCD}(p_{1},p_{2})$ and $m_{21}=p_{2}/\mbox{GCD}(p_{1},p_{2})$.
Assume $ $$m_{12}=x_{u_{1}a_{1}}...x_{u_{r}a_{r}}$, $m_{21}=x_{v_{1}b_{1}}...x_{v_{w}b_{w}}$,
then define the matrix \begin{eqnarray*}
M_{12}:=\left[\begin{array}{ccc}
x_{u_{1}a_{1}} & ... & x_{u_{1}a_{r}}\\
x_{u_{2}a_{1}} & ... & x_{u_{2}a_{r}}\\
\vdots &  & \vdots\\
x_{u_{r}a_{1}} &  & x_{u_{r}a_{r}}\end{array}\right]\end{eqnarray*}
and the matrix \begin{eqnarray*}
M_{21}:=\left[\begin{array}{ccc}
x_{v_{1}b_{1}} & ... & x_{v_{1}b_{w}}\\
x_{v_{2}b_{1}} & ... & x_{v_{2}b_{w}}\\
\vdots &  & \vdots\\
x_{v_{w}b_{1}} &  & x_{v_{w}b_{w}}\end{array}\right] &  & .\end{eqnarray*}

\end{defn}
\medskip{}

The following lemma helps us replace a polynomial with a leading term
involving $x_{i,j}$'s by a polynomial with a leading term without
involving $x_{ij}$'s.
\begin{lem}
\label{xTox_y}Let $a_{s_{1}}<...<a_{1}$, $1\leq r\leq s_{1}$, and
let $g_{i_{1}j_{1},i_{2}j_{2}}$ be as defined in Lemma \ref{defineL}.
Then

\[
\left|\left[\begin{array}{c}
Y^{1,r-1}\\
Z^{r,r}\\
X^{r+1,s_{1}}\end{array}\right]_{a_{s_{1}},...,a_{1}}\right|=\left|\left[\begin{array}{c}
Y^{1,r-1}\\
Z^{r,r}\\
Y^{r+1,s_{1}}\end{array}\right]_{a_{s_{1}},...,a_{1}}\right|+\sum_{u=r+1}^{s_{1}}\left|\left[\begin{array}{c}
Y^{1,r-1}\\
X^{r,r}-Y^{r,r}\\
Y^{r+1,u-1}\\
Z^{u,u}\\
X^{u+1,s_{1}}\end{array}\right]_{a_{s_{1}},...,a_{1}}\right|\]
\begin{eqnarray*}
\end{eqnarray*}
\begin{eqnarray*}
 & +{\displaystyle \sum_{u=r+1}^{s_{1}}}{\displaystyle \sum_{\{c_{1},c_{2},d_{1},...,d_{s_{1}-2}\}=\{a_{1},...,a_{s_{1}}\}}}\pm(g_{rc_{1},uc_{2}}-g_{rc_{2},uc_{1}})\left|\left[\begin{array}{c}
Y^{1,r-1}\\
Y^{r+1,u-1}\\
X^{u+1,s_{1}}\end{array}\right]_{d_{1},...,d_{s_{1}-2}}\right|.\end{eqnarray*}
\end{lem}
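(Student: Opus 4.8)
The plan is to prove the identity by Laplace expansion along the $Z$-row (row $r$) on the left-hand side, and then to reorganize the resulting sum of $(s_1-1)\times(s_1-1)$ minors so that it matches the three groups of terms on the right. Write $L$ for the left-hand matrix $\left[\begin{array}{c}Y^{1,r-1}\\Z^{r,r}\\X^{r+1,s_1}\end{array}\right]_{a_{s_1},\dots,a_1}$. Expanding $|L|$ along row $r$ produces $s_1$ terms $\pm z_{r c}\,|L^{(r,c)}|$, where $L^{(r,c)}$ is the submatrix with row $r$ and the column indexed by $c$ deleted; note $L^{(r,c)}$ has the shape $\left[\begin{array}{c}Y^{1,r-1}\\X^{r+1,s_1}\end{array}\right]$ on the remaining columns. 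This is the ``anchor'' we will manipulate.

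First I would handle the ``$X\to Y$'' conversion in rows $r+1,\dots,s_1$. For each of those rows, repeatedly applying the elementary identity $x_{u c}=y_{u c}+(x_{u c}-y_{u c})$ inside a determinant (exactly the one-row version used already in Lemma~\ref{YtoX} and Lemma~\ref{xtox-y}) lets me telescope $|L|$ into (i) the fully-converted determinant $\left|\left[\begin{array}{c}Y^{1,r-1}\\Z^{r,r}\\Y^{r+1,s_1}\end{array}\right]\right|$, plus (ii) for each $u$ from $r+1$ to $s_1$, a determinant in which rows $r+1,\dots,u-1$ are $Y$, row $u$ is the difference row $X^{u,u}-Y^{u,u}$, and rows $u+1,\dots,s_1$ are still $X$, with the $Z$-row untouched in position $r$. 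Terms (ii) are not yet in the stated form, because the stated form has a $Z^{u,u}$ row and the difference row has been moved up to position $r$ as $X^{r,r}-Y^{r,r}$. This repositioning is the crux: in each term of type (ii), I expand along the difference row $X^{u,u}-Y^{u,u}$ and also along the $Z$-row in position $r$, producing a $2\times 2$ ``interaction'' determinant in the variables $z_{r,\bullet},z_{u,\bullet},(x_{r,\bullet}-y_{r,\bullet}),(x_{u,\bullet}-y_{u,\bullet})$ times an $(s_1-2)\times(s_1-2)$ minor of the $Y/X$-block on the complementary columns. Using the definition of $g_{ij,lk}$ together with Lemma~\ref{Swithchg_ij} (which rewrites the ``transposed'' $2\times 2$ determinant $\left|\begin{smallmatrix}z_{r c_1}&z_{r c_2}\\x_{u c_1}-y_{u c_1}&x_{u c_2}-y_{u c_2}\end{smallmatrix}\right|$ in terms of $g_{r c_1,u c_2}-g_{r c_2,u c_1}$ plus the $z_{u,\bullet}$-version), I can split term (ii) into: a piece with the $Z$-row moved to position $u$ and a genuine $X^{r,r}-Y^{r,r}$ difference row left in position $r$ — which is exactly the second group on the right — plus a piece that is a $g$-combination times a minor of $\left[\begin{array}{c}Y^{1,r-1}\\Y^{r+1,u-1}\\X^{u+1,s_1}\end{array}\right]$ — exactly the third group.

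The remaining bookkeeping is purely combinatorial: I must check that the signs $\pm$ attached to the $(g_{r c_1,u c_2}-g_{r c_2,u c_1})$ terms, which come from the product of a Laplace sign along column $c_1$ (or $c_2$) in the $Z$-row, a Laplace sign along the difference row, and the sign of the induced column order on the $(s_1-2)$-element complement $\{d_1,\dots,d_{s_1-2}\}$, collapse to a single well-defined sign depending only on $r,u$ and the partition $\{c_1,c_2,d_1,\dots,d_{s_1-2}\}=\{a_1,\dots,a_{s_1}\}$; this is why the statement leaves the sign as ``$\pm$''. I would do this sign tracking by first treating the two extreme cases $u=r+1$ (no intermediate $Y$-rows) and $u=s_1$ (no trailing $X$-rows) to fix conventions, then the general case. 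The main obstacle is precisely this sign and index reconciliation — matching the three-fold sign product from the nested Laplace expansions against the column-ordering sign in the $(s_1-2)$-minor — rather than any conceptual difficulty; the algebraic content is entirely contained in the already-proved one-row conversion lemma (\ref{YtoX}/\ref{xtox-y}) and in Lemma~\ref{Swithchg_ij}.
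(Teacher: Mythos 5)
Your proposal is correct and follows essentially the same route as the paper's own proof: first the telescoping $X\to Y$ conversion of rows $r+1,\dots,s_1$ (the paper invokes Lemma \ref{xtox-y} for exactly this), then a two-row Laplace expansion along the $Z^{r,r}$ row and the difference row $X^{u,u}-Y^{u,u}$, followed by Lemma \ref{Swithchg_ij} to trade the resulting $2\times 2$ blocks for the $g$-terms plus the transposed blocks, which reassemble into the determinants with $X^{r,r}-Y^{r,r}$ in position $r$ and $Z^{u,u}$ in position $u$. The only superfluous element is your initial expansion along the $Z$-row, and like the paper you leave the sign bookkeeping at the level of $\pm$, so no substantive difference remains.
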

\begin{proof}
For the purpose of this proof we drop the column indices. We use Lemma
\ref{xtox-y} to obtain\[
\left|\left[\begin{array}{c}
Y^{1,r-1}\\
Z^{r,r}\\
X^{r+1,s_{1}}\end{array}\right]\right|=\left|\left[\begin{array}{c}
Y^{1,r-1}\\
Z^{r,r}\\
Y^{r+1,s_{1}}\end{array}\right]\right|+\sum_{u=r+1}^{s_{1}}\left|\left[\begin{array}{c}
Y^{1,r-1}\\
Z^{r,r}\\
Y^{r+1,u-1}\\
X^{u,u}-Y^{u,u}\\
X^{u+1,s_{1}}\end{array}\right]\right|.\]
Notice that

\begin{eqnarray*}
 & \left|\left[\begin{array}{c}
Y^{1,r-1}\\
Z^{r,r}\\
Y^{r+1,u-1}\\
X^{u,u}-Y^{u,u}\\
X^{u+1,s_{1}}\end{array}\right]\right|\\
= & {\displaystyle \sum_{\{c_{1},c_{2},d_{1},...,d_{s_{1}-2}\}=\{a_{1},...,a_{s_{1}}\}}}\\
 & \pm\left|\left[\begin{array}{cc}
z_{rc_{1}} & z_{rc_{2}}\\
x_{uc_{1}}-y_{uc_{1}} & x_{uc_{2}}-y_{uc_{2}}\end{array}\right]\right|\left|\left[\begin{array}{c}
Y^{1,r-1}\\
Y^{r+1,u-1}\\
X^{u+1,s_{1}}\end{array}\right]_{d_{1},...,d_{s_{1}-2}}\right|\end{eqnarray*}
\begin{eqnarray*}
= & {\displaystyle \sum_{\{c_{1},c_{2},d_{1},...,d_{s_{1}-2}\}=\{a_{1},...,a_{s_{1}}\}}}\pm\Biggl(g_{rc_{1},uc_{2}}-g_{rc_{2},uc_{1}}\\
 & \left.+\left|\left[\begin{array}{cc}
x_{rc_{1}}-y_{rc_{1}} & x_{rc_{2}}-y_{rc_{2}}\\
z_{uc_{1}} & z_{uc_{2}}\end{array}\right]\right|\right)\left|\left[\begin{array}{c}
Y^{1,r-1}\\
Y^{r+1,u-1}\\
X^{u+1,s_{1}}\end{array}\right]_{d_{1},...,d_{s_{1}-2}}\right|\\
= & {\displaystyle \sum_{u=r+1}^{s_{1}}\sum_{\{c_{1},c_{2},d_{1},...,d_{s_{1}-2}\}=\{a_{1},...,a_{s_{1}}\}}}\\
 & \pm(g_{rc_{1},uc_{2}}-g_{rc_{2},uc_{1}})\left|\left[\begin{array}{c}
Y^{1,r-1}\\
Y^{r+1,u-1}\\
X^{u+1,s_{1}}\end{array}\right]_{d_{1},...,d_{s_{1}-2}}\right|\\
 & +\sum_{u=r+1}^{s_{1}}\left|\left[\begin{array}{c}
Y^{1,r-1}\\
X^{r,r}-Y^{r,r}\\
Y^{r+1,u-1}\\
Z^{u,u}\\
X^{u+1,s_{1}}\end{array}\right]\right| & .\end{eqnarray*}

\medskip{}

\end{proof}
The determinant in the following lemma appears in many cases in the
computing of Gröbner basis. This lemma enables the determinant to
be written as a combination of elements of $I_{s_{2}}(Y)$ and $g_{i_{1}j_{1},i_{2}j_{2}}$.
\begin{lem}
\label{topx}Let $a_{1}<...<a_{s_{1}+1}$, and $1\leq r\leq s_{1}$.
One has \[
\sum_{u=r}^{s_{2}}\left|\left[\begin{array}{c}
X^{r,r}\\
Y^{1,u-1}\\
Z^{u,u}\\
X^{u+1,s_{1}}\end{array}\right]_{a_{1},...,a_{s_{1}+1}}\right|\in I_{s_{2}}(Y)+(g_{i_{1}j_{1},i_{2}j_{2}}|\,1\leq i_{v}\leq m,\,1\leq j_{v}\leq n,\mbox{}v=1,2).\]
\end{lem}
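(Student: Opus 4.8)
The plan is to reduce the alternating sum $\sum_{u=r}^{s_2}\bigl|\bigl[\begin{smallmatrix}X^{r,r}\\Y^{1,u-1}\\Z^{u,u}\\X^{u+1,s_1}\end{smallmatrix}\bigr]_{a_1,\dots,a_{s_1+1}}\bigr|$ to a form in which the $Z$-row can be eliminated against a $(X-Y)$-difference row, thereby producing $g$-terms, while the remaining pure $Y$-determinants are $s_2$-minors of $Y$. First I would use Lemma \ref{xtox-y} to rewrite each $X$-row lying below the $Z$-row (i.e.\ rows $u+1,\dots,s_1$) in terms of $Y$-rows plus difference rows $X^{l,l}-Y^{l,l}$; and similarly rewrite the single $X^{r,r}$ row sitting at the top, so that after this substitution every term either has all its non-$Z$ rows equal to $Y$-rows, or contains a difference row $X^{l,l}-Y^{l,l}$ for some $l\neq u$. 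In the first kind of term we have a determinant of a matrix with rows $Y^{1,u-1}$, $Z^{u,u}$, $Y^{u+1,s_1}$ (and $Y$ in the top slot once $X^{r,r}$ has been converted): these are exactly the shape that Lemma \ref{xTox_y} or a direct Laplace expansion along the $Z$-row lets us split into $g_{i_1j_1,i_2j_2}$-terms times $Y$-minors, because the $Z$-row and an adjacent $Y$-row form the two rows of a $g$ after a Koszul-type swap as in Lemma \ref{Swithchg_ij}.

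The second step is the bookkeeping of signs. The summation index $u$ runs over $r$ through $s_2$, and the alternating character of the sum is what makes the $Y$-only contributions telescope (or cancel in pairs) down to a single $s_2$-minor of $Y_{s_2 t_2}$, which lies in $I_{s_2}(Y)$ by definition since $a_1<\dots<a_{s_1+1}\le \min(t_1,t_2)\le t_2$ and we are using $s_2$ of these columns. Concretely, I would expand each $\bigl|\bigl[\begin{smallmatrix}\cdots\\Y^{1,u-1}\\Z^{u,u}\\Y^{u+1,s_1}\end{smallmatrix}\bigr]\bigr|$ along its $Z$-row, replace each pair $(z_{u a}, y_{v a})$ (with $v$ the row just above or just below) by a $g$-difference plus a term with the roles of $Z$ and the difference row interchanged exactly as in Lemmas \ref{g_ij} and \ref{Swithchg_ij}, and collect: the interchanged terms reassemble, over the range of $u$, into an alternating sum of genuine $Y$-determinants that collapses to an element of $I_{s_2}(Y)$, while everything else is manifestly in the ideal generated by the $g_{i_1j_1,i_2j_2}$.

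I expect the main obstacle to be precisely this sign and index reconciliation: verifying that after applying Lemma \ref{xtox-y} to both the top $X$-row and the lower $X$-rows, and then the Koszul swap to each resulting $Z$-plus-$Y$ block, the surviving pure-$Y$ pieces really do cancel down to a single $s_2\times s_2$ minor of $Y$ rather than leaving a stray $(s_1{+}1)$-sized or wrongly-shaped determinant — this is where the hypotheses $r\le s_1$ and the running index bound $u\le s_2$ must be used carefully. The difference-row terms $X^{l,l}-Y^{l,l}$ produced along the way should each, after re-expansion, again fall into the span of the $g$'s (one factor being a $2\times 2$ determinant with a $Z$-row and a difference row, which is $g$ up to a swap), so no genuinely new type of term appears; confirming that closure is the other point requiring care. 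Once both of these are checked, the containment in $I_{s_2}(Y) + (g_{i_1j_1,i_2j_2})$ follows.
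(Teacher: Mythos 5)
Your overall plan (rewrite $X$-rows as $Y$-rows plus difference rows via Lemma \ref{xtox-y}, produce $g$'s by expanding along a $Z$-row together with a difference row, absorb pure $Y$-pieces into $I_{s_2}(Y)$, and let the sum over $u$ cancel the rest) is the paper's strategy, but two of the mechanisms you invoke are not valid as stated. First, a $Z$-row paired with a $Y$-row never yields a $g$: by definition $g_{i_1j_1,i_2j_2}$ pairs a $Z$-row with a difference row $X-Y$, and the Koszul swap of Lemma \ref{Swithchg_ij} only exchanges which of a $Z$-row and a difference row carries the $z$'s, at the cost of a leftover $2\times2$ of the same type that is \emph{not} in $(g)$. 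So your claim that the determinants with rows $Y^{1,u-1}$, $Z^{u,u}$, $Y^{u+1,s_1}$ (plus a converted top row) ``split into $g$-terms times $Y$-minors,'' and your proposed replacement of pairs $(z_{ua},y_{va})$ by $g$-differences, do not work. In the paper the single surviving determinant of this shape (it occurs only for $u=r$; for $u>r$ converting the top row duplicates $Y^{r,r}$ and the determinant vanishes) lies in $I_{s_2}(Y)$ outright: Laplace expansion along its $Z$-row leaves minors in rows $1,\dots,s_1\supseteq 1,\dots,s_2$ of $Y$ --- these are not $s_2$-minors themselves but are contained in $I_{s_2}(Y)$ --- and no $g$'s are involved.

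Second, you misidentify the cancellation that makes the lemma true. The paper rewrites only the $u=r$ summand: splitting its top row as $(X^{r,r}-Y^{r,r})+Y^{r,r}$ gives genuine $g_{rc_1,rc_2}$-terms from the difference piece, and Lemma \ref{xTox_y} applied to the $Y^{r,r}$ piece produces (i) the pure $Y$-plus-$Z$ term just discussed, (ii) $(g_{rc_1,uc_2}-g_{rc_2,uc_1})$-terms, (iii) for $u>s_2$ determinants containing all of the rows $Y^{1},\dots,Y^{s_2}$, hence lying in $I_{s_2}(Y)$ (not in $(g)$, as your ``closure'' claim would have it), and (iv) for $r<u\le s_2$ determinants containing both the row $Y^{r,r}$ and the difference row $X^{r,r}-Y^{r,r}$. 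Adding the former row to the latter turns each term of (iv) into exactly $-\bigl|[X^{r,r};\,Y^{1,u-1};\,Z^{u,u};\,X^{u+1,s_1}]\bigr|$, i.e.\ minus the remaining summands of the very sum in the statement; that self-cancellation against the $u>r$ terms is the crux of the proof. Your description --- ``the $Y$-only contributions telescope down to a single $s_2$-minor,'' ``the interchanged terms reassemble into an alternating sum of genuine $Y$-determinants'' --- does not capture this: the swap leftovers are mixed $X/Y/Z$ determinants, not $Y$-determinants, and the pure $Y$ contributions are handled individually rather than by telescoping. Without identifying the recombination in (iv), the proposal has no argument for why the cross terms disappear, so the containment is not established.
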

\begin{proof}
The column indices are omitted again. First we write \begin{eqnarray*}
 & {\displaystyle \sum_{u=r}^{s_{2}}\left|\left[\begin{array}{c}
X^{r,r}\\
Y^{1,u-1}\\
Z^{u,u}\\
X^{u+1,s_{1}}\end{array}\right]\right|=\left|\left[\begin{array}{c}
X^{r,r}\\
Y^{1,r-1}\\
Z^{r,r}\\
X^{r+1,s_{1}}\end{array}\right]\right|+{\displaystyle \sum_{u=r+1}^{s_{2}}\left|\left[\begin{array}{c}
X^{r,r}\\
Y^{1,u-1}\\
Z^{u,u}\\
X^{u+1,s_{1}}\end{array}\right]\right|}}\end{eqnarray*}
then using Lemma \ref{g_ij} and \ref{xTox_y}, we obtain \[
\left|\left[\begin{array}{c}
X^{r,r}\\
Y^{1,r-1}\\
Z^{r,r}\\
X^{r+1,s_{1}}\end{array}\right]\right|=\left|\left[\begin{array}{c}
X^{r,r}-Y^{r,r}\\
Y^{1,r-1}\\
Z^{r,r}\\
X^{r+1,s_{1}}\end{array}\right]\right|+\left|\left[\begin{array}{c}
Y^{r,r}\\
Y^{1,r-1}\\
Z^{r,r}\\
X^{r+1,s_{1}}\end{array}\right]\right|\]
\begin{eqnarray*}
= & {\displaystyle \sum_{\{c_{1},c_{2},d_{1},...,d_{s_{1}-1}\}=\{a_{1},...,a_{s_{1+1}}\}}}\pm g_{rc_{1},rc_{2}}\left|\left[\begin{array}{c}
Y^{1,r-1}\\
X^{r+1,s_{1}}\end{array}\right]_{c_{1},...,c_{s_{1}-1}}\right|+\left|\left[\begin{array}{c}
Y^{r,r}\\
Y^{1,r-1}\\
Z^{r,r}\\
Y^{r+1,s_{1}}\end{array}\right]\right|\end{eqnarray*}
\[
+{\displaystyle \sum_{u=r+1}^{s_{2}}\sum_{\{c_{1},c_{2},d_{1},...,d_{s_{1}-1}\}=\{a_{1},...,a_{s_{1}+1}\}}}\pm(g_{rc_{1},uc_{2}}-g_{rc_{2},uc_{1}})\left|\left[\begin{array}{c}
Y^{r,r}\\
Y^{1,r-1}\\
Y^{r+1,u-1}\\
X^{u+1,s_{1}}\end{array}\right]_{d_{1},...,d_{s_{1}-1}}\right|\]
\begin{eqnarray*}
 &  & +\sum_{u=r+1}^{s_{2}}\left|\left[\begin{array}{c}
Y^{r,r}\\
Y^{1,r-1}\\
X^{r,r}-Y^{r,r}\\
Y^{r+1,u-1}\\
Z^{u,u}\\
X^{u+1,s_{1}}\end{array}\right]\right|+\sum_{u=s_{2}+1}^{s_{1}}\left|\left[\begin{array}{c}
Y^{r,r}\\
Z^{r}\\
Y^{1,r-1}\\
Y^{r+1,u-1}\\
X^{u,u}-Y^{u,u}\\
X^{u+1,s_{1}}\end{array}\right]\right|.\end{eqnarray*}
 $\alpha$ is defined as given : \[
\alpha=\sum_{\{c_{1},c_{2},d_{1},...,d_{s_{1}-1}\}=\{a_{1},...,a_{s_{1+1}}\}}\pm g_{rc_{1},rc_{2}}\left|\left[\begin{array}{c}
Y^{1,r-1}\\
X^{r+1,s_{1}}\end{array}\right]_{c_{1},...,c_{s_{1}-1}}\right|+\left|\left[\begin{array}{c}
Y^{r,r}\\
Y^{1,r-1}\\
Z^{r,r}\\
Y^{r+1,s_{1}}\end{array}\right]\right|\]
\[
+\sum_{u=r+1}^{s_{2}}\sum_{\{c_{1},c_{2},d_{1},...,d_{s_{1}-1}\}=\{a_{1},...,a_{s_{1}+1}\}}\pm(g_{rc_{1},uc_{2}}-g_{rc_{2},uc_{1}})\left|\left[\begin{array}{c}
Y^{r,r}\\
Y^{1,r-1}\\
Y^{r+1,u-1}\\
X^{u+1,s_{1}}\end{array}\right]_{d_{1},...,d_{s_{1}-1}}\right|\]
\begin{eqnarray*}
 & +\sum_{u=s_{2}+1}^{s_{1}}\left|\left[\begin{array}{c}
Y^{r,r}\\
Z^{r}\\
Y^{1,r-1}\\
Y^{r+1,u-1}\\
X^{u,u}-Y^{u,u}\\
X^{u+1,s_{1}}\end{array}\right]\right|.\end{eqnarray*}
This shows the element $\alpha$ is in $I_{s_{2}}(Y_{s_{2}t_{2}})+(g_{i_{1}j_{1},i_{2}j_{2}})$.
After removing the repeated row $y_{r}$, we have\\
 \[
{\displaystyle \sum_{u=r+1}^{s_{2}}}\left|\left[\begin{array}{c}
Y^{r,r}\\
Y^{1,r-1}\\
X^{r,r}-Y^{r,r}\\
Y^{r+1,u-1}\\
Z^{u,u}\\
X^{u+1,s_{1}}\end{array}\right]\right|=\sum_{u=r+1}^{s_{2}}\left|\left[\begin{array}{c}
Y^{r,r}\\
Y^{1,r-1}\\
X^{r,r}\\
Y^{r+1,u-1}\\
Z^{u,u}\\
X^{u+1,s_{1}}\end{array}\right]\right|\]
\begin{eqnarray*}
= & -{\displaystyle \sum_{u=r+1}^{s_{2}}\left|\left[\begin{array}{c}
X^{r,r}\\
Y^{1,r-1}\\
Y^{r,r}\\
Y^{r+1,u-1}\\
Z^{u,u}\\
X^{u+1,s_{1}}\end{array}\right]\right|} & =-\sum_{u=r+1}^{s_{2}}\left|\left[\begin{array}{c}
X^{r,r}\\
Y^{1,u-1}\\
Z^{u,u}\\
X^{u+1,s_{1}}\end{array}\right]\right|.\end{eqnarray*}

\end{proof}
In order to simplify the notation and the computation, we define notation
to keep track of sums of determinants.
\begin{defn}
\label{TopM}Let $G$ be a collection of polynomials in the ring $k[X,Y,Z]$
with $X$, $Y$, $Z$ as $m$ by $n$ matrices of variables over the
field $k$. Let $\{P_{a_{1},...,a_{q_{u}}}^{u}\}_{u\in I}$ be an
element of $G$ such that each $P_{a_{1},...,a_{q_{u}}}^{u}$ is the
sum of determinants $P_{i}^{u}$ of $m$ by $n$ matrices with the
same column indices, $a_{1},...,a_{q_{u}}$, in variables $X$, $Y$
and $Z$. Denote $P_{a_{1},...,a_{q_{u}}}^{u}=\sum_{i=1}^{p_{u}}P_{i}^{u}$
with $P_{1}^{u}$ containing the leading term of $P_{a_{1},...,a_{q_{u}}}^{u}$.
For example the element $f_{a_{1},...,a_{s_{1}}}$ in the Lemma \ref{defineL}
is written as $f_{a_{1},...,a_{s_{1}}}=\sum_{i=1}^{s_{2}}f_{i}$.

Given $P_{a_{1},...,a_{q_{u}}}^{u}$ and $P_{b_{1},...,b_{q_{v}}}^{v}$
in $G$, define $m_{12}$, $m_{21}$, $M_{12}$ and $M_{21}$ as definition
\ref{M_ij}. Assume $M_{12}$ has column index $c_{1},...,c_{p_{12}}$
and $M_{21}$ has column indices, $d_{1},...,d_{p_{21}}$. Define
$\overline{P_{a_{1},..,a_{q_{u}},d_{1},...,d_{p_{21}}}^{u}}$ and
$\overline{P_{b_{1},...,b_{q_{v}},c_{1},...,c_{p_{12}}}^{v}}$ as
following: add the rows of $M_{21}$ on top of each matrix of $P_{a_{1},...,a_{q_{u}}}^{u}$
and add the columns of $M_{21}$ in front of each matrix of $P_{a_{1},...,a_{q_{u}}}^{u}$.
Take the determinant of each matrix and take the sum of all determinants
to form the new polynomial $\overline{P_{a_{1},..,a_{q_{u}},d_{1},...,d_{p_{21}}}^{u}}$.
Similarly, use $M_{12}$ to obtain $\overline{P_{b_{1},...,b_{q_{v}},c_{1},...,c_{p_{12}}}^{v}}$
. Write $\overline{P_{a_{1},..,a_{q_{u}},d_{1},...,d_{p_{21}}}^{u}}=\sum_{i=1}^{p_{u}}\overline{P_{i}^{u}}$
and $\overline{P_{b_{1},...,b_{q_{v}},c_{1},...,c_{p_{12}}}^{v}}=\sum_{i=1}^{p_{v}}\overline{P_{i}^{v}}$
where $\overline{P_{1}^{u}}$ and $\overline{P_{1}^{v}}$ contain
the leading terms of $\overline{P_{a_{1},..,a_{q_{u}},d_{1},...,d_{p_{21}}}^{u}}$
and $\overline{P_{b_{1},...,b_{q_{v}},c_{1},...,c_{p_{12}}}^{v}}$
. For example in Lemma \ref{defineL}, we have $f_{a_{1},...,a_{s_{1}}}=f_{a_{1},...,a_{s_{1}}}^{1}$
and $f_{b_{1},a_{2},...,a_{s_{1}}}=f_{b_{1}a_{2},...,a_{s_{1}}}^{2}$
then $m_{12}=M_{12}=z_{a_{1}}$ and $m_{21}=M_{21}=z_{b_{1}}$ then
\begin{eqnarray*}
\overline{f_{b_{1},a_{1},a_{2},...,a_{s_{1}}}^{1}} & = & \sum_{q=1}^{s_{2}}(-1)^{q+1}\left|\left[\begin{array}{c}
Z^{1,1}\\
Z^{q,q}\\
Y^{1,q-1}\\
X^{q+1,m}\end{array}\right]_{b_{1},a_{1},a_{2},...,a_{s_{1}}}\right|=\sum_{i=1}^{s_{2}}\overline{f_{i}},\\
 & = & -\overline{f_{a_{1},b_{1},a_{2},...,a_{s_{1}}}^{2}}=-\sum_{i=1}^{s_{1}}\overline{f_{i}^{2}}.\end{eqnarray*}

\end{defn}
The technique of proving the following lemma is the main technique
we are going to use for computing the $S$-pairs of elements of a
Gröbner basis.
\begin{lem}
\label{MandM}Notation as above. If \[
\mathrm{in}(m_{21}P_{a_{1},...,a_{q_{u}}}^{u})=\mathrm{in}(|M_{21}|P_{a_{1},...,a_{q_{u}}}^{u})=\mathrm{in}(\overline{P_{a_{1},..,a_{q_{u}},d_{1},...,d_{p_{21}}}^{u}})=\mathrm{in}(\overline{P_{1}^{u}})\]
 and \[
\mathrm{in}(m_{12}P_{b_{1},...,b_{q_{v}}}^{v})=\mathrm{in}(|M_{12}|P_{b_{1},...,b_{q_{v}}}^{v})=\mathrm{in}(\overline{P_{b_{1},...,b_{q_{v}},c_{1},...,c_{p_{12}}}^{v}})=\mathrm{in}(\overline{P_{1}^{v}}).\]
Furthermore $\sum_{i=2}^{p_{u}}\overline{P_{i}^{u}}$ and $\sum_{i=2}^{p_{v}}\overline{P_{i}^{v}}$
can be written as combination of elements of $G$ with leading term
smaller than $\mathrm{in}(\overline{P_{1}^{u}})$. Then the $S$-pair
of $P_{a_{1},...,a_{q_{u}}}^{u}$ and $P_{b_{1},...,b_{q_{v}}}^{v}$
has zero reminder.\end{lem}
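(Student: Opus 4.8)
The plan is to verify Buchberger's Criterion for the $S$-pair of $P^u_{a_1,\dots,a_{q_u}}$ and $P^v_{b_1,\dots,b_{q_v}}$ by producing an explicit expression that exhibits the remainder as zero. First I would record the elementary identity relating the $S$-pair to the enlarged determinant sums $\overline{P^u}$ and $\overline{P^v}$. By Definition \ref{M_ij}, the $S$-pair is $m_{21}P^u_{a_1,\dots,a_{q_u}} - m_{12}P^v_{b_1,\dots,b_{q_v}}$ (up to the sign conventions), and the hypotheses say that $\mathrm{in}(m_{21}P^u) = \mathrm{in}(|M_{21}|P^u)$ and likewise on the other side. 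The key first step is the observation that $|M_{21}|\,P^u_{a_1,\dots,a_{q_u}}$ and $\overline{P^u_{a_1,\dots,a_{q_u},d_1,\dots,d_{p_{21}}}}$ differ only by terms of lower initial degree: Laplace-expanding each matrix of $\overline{P^u}$ along the newly adjoined rows/columns of $M_{21}$ produces $|M_{21}|\,P^u$ as the ``diagonal'' term plus a collection of products of smaller minors — and crucially the hypothesis pins $\mathrm{in}(|M_{21}|P^u)=\mathrm{in}(\overline{P^u_1})$, so all those extra Laplace terms have strictly smaller initial term.

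Next I would assemble the telescoping. Write the $S$-pair as
\[
m_{21}P^u - m_{12}P^v = \bigl(m_{21}P^u - |M_{21}|P^u\bigr) + \bigl(|M_{21}|P^u - \overline{P^u}\bigr) + \bigl(\overline{P^u} - \overline{P^v}\bigr) + \bigl(\overline{P^v} - |M_{12}|P^v\bigr) + \bigl(|M_{12}|P^v - m_{12}P^v\bigr),
\]
where I abbreviate $\overline{P^u}=\overline{P^u_{a_1,\dots,a_{q_u},d_1,\dots,d_{p_{21}}}}$ and similarly for $\overline{P^v}$. Each parenthesized block is either identically zero, or lies in $\mathcal L$ (equivalently, in the span of $G$) with initial term strictly below $\mathrm{in}(m_{21}P^u)$: the first and last blocks vanish because $m_{21}P^u$ and $|M_{21}|P^u$ — as \emph{polynomials}, after expansion — agree up to lower-order noise that the hypothesis says is absorbed; the blocks $|M_{21}|P^u-\overline{P^u}$ and $\overline{P^v}-|M_{12}|P^v$ are handled by the Laplace expansion just described, noting that each smaller minor appearing is again (a piece of) an element of $G$ — here is where one invokes Lemmas \ref{xTox_y} and \ref{topx} to rewrite the residual determinants in terms of $I_{s_2}(Y)$ and the $g_{i_1j_1,i_2j_2}$. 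The middle block $\overline{P^u}-\overline{P^v}$ is the heart: one must check that $\overline{P^u}$ and $\overline{P^v}$ are \emph{the same polynomial}, or differ by something lower-order in $G$. This is forced by the structure of how rows/columns of $M_{12}$ and $M_{21}$ are adjoined — both operations produce determinants over the same common column set $\{a_1,\dots,a_{q_u}\}\cup\{d_i\}=\{b_1,\dots,b_{q_v}\}\cup\{c_i\}$ and the same rows — so after reordering rows and columns (picking up matching signs) they coincide.

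Finally I would close the argument by invoking Buchberger's Criterion. Having written $m_{21}P^u-m_{12}P^v = \sum_u f^{(ij)}_u g_u + (\text{terms of lower initial term})$ with every summand on the right having $\mathrm{in} < \mathrm{in}(m_{21}P^u)$, and with the explicit reduction of $\sum_{i\ge2}\overline{P^u_i}$ and $\sum_{i\ge2}\overline{P^v_i}$ into $G$ supplied by hypothesis, the remainder $h_{P^uP^v}$ is zero. I expect the main obstacle to be the bookkeeping in the middle block: matching up the sign conventions $\pm$ hidden in Definition \ref{TopM} when one reorders the adjoined rows of $M_{21}$ past the existing rows of $P^u$, and simultaneously ensuring that the Laplace residuals really do land in $G$ rather than merely in the ambient polynomial ring — this is precisely the point where Lemmas \ref{xTox_y} and \ref{topx} must be applied carefully, and where one has to be sure no residual minor of size between $s_2$ and $s_1$ is left unaccounted for.
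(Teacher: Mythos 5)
Your telescoping is, after rearranging terms, the same standard-representation identity the paper uses, but your justification of the central block $\overline{P^{u}}-\overline{P^{v}}$ contains a genuine flaw. These two polynomials do \emph{not} coincide after reordering rows and columns: $\overline{P^{u}}$ is a sum of $p_{u}$ enlarged determinants built from the matrices of $P_{a_{1},...,a_{q_{u}}}^{u}$, while $\overline{P^{v}}$ is a sum of $p_{v}$ enlarged determinants built from the (in general different) matrices of $P_{b_{1},...,b_{q_{v}}}^{v}$; when the two polynomials come from different families of $G$ (say an $f^{l,k}$ against a $|Y_{b_{1},...,b_{s_{2}}}^{1,s_{2}}|$) the two enlarged sums are visibly different. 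What is true, and what the paper's proof rests on, is only the equality of the \emph{leading} enlarged matrices, $\overline{P_{1}^{u}}=\overline{P_{1}^{v}}$ (this is where the structural ``same rows and columns'' observation legitimately applies), which gives
\[
\overline{P_{a_{1},..,a_{q_{u}},d_{1},...,d_{p_{21}}}^{u}}-\overline{P_{b_{1},...,b_{q_{v}},c_{1},...,c_{p_{12}}}^{v}}=\sum_{i\geq2}\overline{P_{i}^{u}}-\sum_{i\geq2}\overline{P_{i}^{v}},
\]
and it is exactly here that the ``furthermore'' hypothesis of the lemma is consumed: that difference is, by assumption, a combination of elements of $G$ with leading terms below $\mathrm{in}(\overline{P_{1}^{u}})$. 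In your write-up the hypothesis is cited only at the very end, while the middle block is attributed to a structural coincidence that fails in general; without the displayed identity your decomposition has no step at which the hypothesis actually enters, so the argument as written does not close. (Your hedge ``or differ by something lower-order in $G$'' is precisely the thing to be proved, and the proof is the identity above.)

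A secondary misallocation: for the blocks $|M_{21}|P^{u}-\overline{P^{u}}$ and $\overline{P^{v}}-|M_{12}|P^{v}$ you do not need Lemmas \ref{xTox_y} or \ref{topx}. Expanding each enlarged matrix along the adjoined rows writes $\overline{P^{u}}$ as $\sum\pm|M_{\alpha_{1},...,\alpha_{p_{21}}}^{21}|\,P_{\beta_{1},...,\beta_{q_{u}}}^{u}$, where every $P_{\beta_{1},...,\beta_{q_{u}}}^{u}$ is literally an element of $G$ of the same family with other column indices (this is the paper's displayed expansion), so these blocks are already coefficient multiples of members of $G$; Lemmas \ref{xTox_y} and \ref{topx} are used later, in the concrete $S$-pair computations, to \emph{verify} the hypotheses of this lemma for specific pairs, not inside its proof. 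With the middle block repaired as above, the remaining blocks $(m_{21}-|M_{21}|)P^{u}$ and $(|M_{12}|-m_{12})P^{v}$ are multiples of $P^{u},P^{v}\in G$ whose leading terms are controlled by the stated initial-term hypotheses, and your telescoping then reproduces the paper's argument.
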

\begin{proof}
From the definition of $M_{12}$, $M_{21}$, we have $\overline{P_{1}^{u}}=\overline{P_{1}^{v}}$.
Hence the following equation holds \begin{equation}
\overline{P_{a_{1},..,a_{q_{u}},d_{1},...,d_{p_{21}}}^{u}}-\overline{P_{b_{1},...,b_{q_{v}},c_{1},...,c_{p_{12}}}^{v}}=\sum_{i=2}^{u}\overline{P_{i}^{u}}-\sum_{i=2}^{v}\overline{P_{i}^{v}}.\label{eq:*}\end{equation}
$\overline{P_{a_{1},..,a_{q_{u}},d_{1},...,d_{p_{21}}}^{u}}$ can
be written as \[
\sum_{\{\alpha_{1},...,\alpha_{p_{21}}\}\cup\{\beta_{1},...,\beta+q_{u}\}=\{a_{1},...,a_{q_{u}},d_{1},...,d_{p_{21}}\}}|M_{\alpha_{1},...,\alpha_{p_{21}}}^{21}|P_{\beta_{1},...,\beta_{q_{u}}}^{u}\]
where $M_{\alpha_{1},...,\alpha_{p_{21}}}^{21}$ has the same rows
as $M^{21}$ with columns, $\alpha_{1},...,\alpha_{p_{21}}$ and $P_{\beta_{1},...,\beta_{q_{u}}}^{u}$
is in $G$ with columns, $\beta_{1},...,\beta_{q_{u}}$. Similarly,
$ $$\overline{P_{b_{1},...,b_{q_{v}},c_{1},...,c_{p_{12}}}^{v}}$
can be written as \[
\sum_{\{\alpha_{1},...,\alpha_{p_{21}}\}\cup\{\beta_{1},...,\beta+q_{u}\}=\{a_{1},...,a_{q_{u}},d_{1},...,d_{p_{21}}\}}|M_{\alpha_{1},...,\alpha_{p_{21}}}^{12}|P_{\beta_{1},...,\beta_{q_{u}}}^{v}.\]
 $|M_{21}|P_{a_{1},...,a_{q_{u}}}^{u}$ and $|M_{12}|P_{b_{1},...,b_{q_{v}}}^{v}$
are one of summands and their initial terms are the initial terms
of each sum. After moving everything other than $m_{21}P_{a_{1},...,a_{q_{u}}}^{u}$
and $m_{12}P_{b_{1},...,b_{q_{v}}}^{v}$ from the left-hand side of
\ref{eq:*} to the right-hand side, we obtain the equality: \[
m_{21}P_{a_{1},...,a_{q_{u}}}^{u}-m_{12}P_{b_{1},...,b_{q_{v}}}^{v}=\sum r_{i}g_{i}\]
 with $g_{i}\in G$ and $\mbox{in}(r_{i}g_{i})<\mbox{in}(m_{12}P_{b_{1},...,b_{q_{v}}}^{v})$.
\end{proof}
We are going to define some polynomials that are in the ideal $\mathcal{L}.$
Those polynomials will be part of the Gröbner basis of $\mathcal{L}$
that we are going to compute. The following definition is coming from
the $f_{a_{1},...,a_{s_{1}}}$ as defined in Lemma \ref{defineL}.
\begin{defn}
\label{f}Let $1\leq a_{1}<a_{2}<...<a_{s_{1}+k-1}\leq\mathrm{min}\{t_{1},t_{2}\}$,
and $1\leq l\leq k\leq s_{2}$, we define $f_{a_{1},...,a_{s_{1}+k-1}}^{l,k}$
as follow: \begin{eqnarray*}
 & f_{a_{1},...,a_{s_{1}+k-1}}^{l,k}:=\\
 & {\displaystyle \sum_{r=k}^{s_{2}}}(-1)^{r+1}\left|\left[\begin{array}{c}
Z^{l,k-1}\\
Z^{r,r}\\
X^{1,l-1}\\
Y^{1,r-1}\\
Y^{r+1,s_{1}}\end{array}\right]_{a_{1},...,a_{s_{1}+k-1}}\right|\\
 & +{\displaystyle \sum_{r=k}^{s_{2}}(-1)^{r+1}\sum_{u=r+1}^{s_{1}}\left|\left[\begin{array}{c}
Z^{l,k-1}\\
X^{r,r}-Y^{r,r}\\
X^{1,l-1}\\
Y^{1,r-1}\\
Y^{r+1,u-1}\\
Z^{u,u}\\
X^{u+1,s_{1}}\end{array}\right]_{a_{1},...,a_{s_{1}+k-1}}\right|}.\end{eqnarray*}
\end{defn}
\begin{lem}
\label{finL}$f_{a_{1},...,a_{s_{1}+k-1}}^{l,k}\in\mathcal{L}$.\end{lem}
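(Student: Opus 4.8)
The plan is to show that $f^{l,k}_{a_1,\dots,a_{s_1+k-1}}$ lies in $\mathcal{L}$ by expressing it, via repeated use of the linear algebra lemmas already established, in terms of the generators $g_{ij,lk}$ and the maximal minors of $Y_{s_2 t_2}$ together with the original $f_{a_1,\dots,a_{s_1}}$'s. The starting point is to recognize that $f^{l,k}$ is built from determinants of matrices whose first $k-l$ rows come from $Z$ (rows $l$ through $k-1$), followed by another $Z$-row (row $r$), then $X$-rows (rows $1$ through $l-1$), then $Y$-rows. The second sum in the definition is precisely the correction term produced when one converts a block of the form $\left[Z^{r,r} \atop X^{r+1,s_1}\right]$ into $\left[Z^{r,r}\atop Y^{r+1,s_1}\right]$ using Lemma~\ref{xTox_y}; so the first observation is that the whole expression $f^{l,k}_{a_1,\dots,a_{s_1+k-1}}$ can be rewritten, modulo the ideal generated by the $g_{ij,lk}$'s, as $\sum_{r=k}^{s_2}(-1)^{r+1}\left|\left[{Z^{l,k-1}\atop Z^{r,r}};{X^{1,l-1}\atop Y^{1,r-1}};X^{r+1,s_1}\right]_{a_1,\dots,a_{s_1+k-1}}\right|$, i.e. with the trailing $Y$-block turned back into an $X$-block. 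This identifies $f^{l,k}$ as the natural "higher" analogue of $f_{a_1,\dots,a_{s_1}}=f^{1,1}$, now carrying $k-l+1$ extra rows of $Z$'s and an extra block of $X$-rows on top.

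Next I would run an induction, most naturally a downward induction on $l$ (or an induction on $k-l$), using the identity of Lemma~\ref{xTox_y} to peel off one row at a time. Concretely, in the determinant $\left|\left[{Z^{l,k-1}\atop Z^{r,r}};{X^{1,l-1}\atop Y^{1,r-1}};X^{r+1,s_1}\right]\right|$ the row $x_l$ (the last row of the block $X^{1,l-1}$, sitting just below the $Z$-rows) can be written as $(x_l - y_l) + y_l$; the $(x_l-y_l)$ part generates, by Laplace expansion along that row against the $2\times 2$ minors formed with a $Z$-row, a sum of $g_{l\,c,\,*\,c'}$-multiples of smaller determinants, hence lies in the $g$-ideal, while the $y_l$ part merges into the $Y$-block. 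Carrying this out for each of the rows $x_1,\dots,x_{l-1}$ in turn, and at each stage absorbing the resulting correction terms into $(g_{ij,lk})$ via Lemmas~\ref{xTox_y} and~\ref{topx}, collapses $f^{l,k}$ down — modulo $(g_{ij,lk}) + I_{s_2}(Y_{s_2 t_2})$ — to an expression involving only the $f_{a_1,\dots,a_{s_1}}$ with no extra $X$-rows on top, which are in $\mathcal{L}$ by Lemma~\ref{defineL}. One has to be careful that the ranges of summation match up: when some of the column/row indices exceed $s_2$ or $t_2$, Lemma~\ref{topx} supplies exactly the statement that the relevant "tail" sum $\sum_{u=r}^{s_2}\left|\left[X^{r,r}; Y^{1,u-1}; Z^{u,u}; X^{u+1,s_1}\right]\right|$ lies in $I_{s_2}(Y)+(g_{ij,lk})$, which is the mechanism that makes the bookkeeping close.

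An alternative, cleaner route is substitution: since $\mathcal{J}$ is the defining ideal of $\operatorname{Sym}(\mathbb{D})$, to show $f^{l,k} \in \mathcal{L}$ it in fact suffices (because $\mathcal{L}\subset\mathcal{J}\subset\mathcal{K}$ and we ultimately want $\mathcal{L}=\mathcal{K}$) to verify the membership after substituting $z_{ij}\mapsto x_{ij}-y_{ij}$ and checking that the image is a $k[X,Y]$-linear combination of maximal minors of $X_{s_1 t_1}$ and of $Y_{s_2 t_2}$; but the generators $g_{ij,lk}$ vanish under this substitution, so one would then apply Lemma~\ref{YtoX} repeatedly to the determinant obtained from $f^{l,k}$ after the substitution — each $Z$-row becomes an $(X-Y)$-row, and Lemma~\ref{YtoX} rewrites the resulting $X$-determinant in terms of $Y$-determinants plus differences, telescoping to land in $I_{s_1}(X)+I_{s_2}(Y)$. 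I would likely present the first, purely ideal-theoretic argument since it keeps everything inside $k[X,Y,Z]$ and dovetails with the later Gr\"obner-basis computations, using the substitution argument only as a sanity check. The main obstacle is the combinatorial sign- and index-chasing in the Laplace expansions: one must verify that the coefficients $(-1)^{r+1}$ and the alternating signs coming from inserting a $Z$-row or moving the $(x_l-y_l)$ row past the $Z$-block combine so that the correction terms genuinely organize into $\mathbb{Z}$-linear (indeed $k[X,Y,Z]$-linear) combinations of the $g_{ij,lk}$'s and of lower $f^{l',k'}$'s, rather than into something new — in other words, that the recursion is closed. This is exactly the kind of bookkeeping that Definition~\ref{TopM} and Lemma~\ref{MandM} are set up to handle, so I would lean on that notation to keep the signs under control.
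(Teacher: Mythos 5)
Your opening move is the same as the paper's: observing that, modulo the ideal generated by the $g_{ij,lk}$'s, the element $f^{l,k}_{a_1,\dots,a_{s_1+k-1}}$ agrees with the auxiliary polynomial $p^{l,k}_{a_1,\dots,a_{s_1+k-1}}$ obtained by replacing the tail block $Y^{r+1,s_1}$ by $X^{r+1,s_1}$ (this is Lemma \ref{xTox_y} applied to each summand, and the second sum in Definition \ref{f} is exactly the non-$g$ part of that correction). But the membership argument you then run has a genuine gap, and it goes in the wrong direction. You propose to peel the rows of $X^{1,l-1}$ off the top of $p^{l,k}$ by writing each as $(x_i-y_i)+y_i$, asserting that the $(x_i-y_i)$ contribution ``lies in the $g$-ideal'' while the $y_i$ part ``merges into the $Y$-block.'' Neither assertion is right as stated: the $y_i$ replacement produces a repeated row (row $y_i$ already occurs in $Y^{1,r-1}$ because $i\le l-1\le r-1$), so that determinant is zero rather than a smaller $f$; and a $2\times2$ minor pairing the difference row $x_i-y_i$ against a $Z$-row of a \emph{different} index $j$ is not an element of $(g_{ij,lk})$ --- by Lemma \ref{Swithchg_ij} it equals a $g$-combination plus the transposed minor, and those transposed pieces reassemble into new determinants with a difference row sitting among $Z$-rows, i.e.\ terms of exactly the shape of the second sum defining $f^{l,k}$, whose membership in $\mathcal{L}$ is what you are trying to prove. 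So the reduction does not visibly close, your claimed endpoint (``an expression involving only the $f_{a_1,\dots,a_{s_1}}$'') is unsupported, and nothing in the sketch accounts for the fact that $f^{l,k}$ involves $s_1+k-1$ columns rather than $s_1$. You yourself flag ``that the recursion is closed'' as the main obstacle; that closure is the entire content of the lemma, not bookkeeping.

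The paper avoids this by building up instead of reducing down: it proves $p^{l,k}\in\mathcal{L}$ through explicit cofactor-expansion identities, $p^{l,k+1}_{a_1\dots a_{s_1+k}}=\sum_i(-1)^{i+1}z_{ka_i}\,f^{l,k}_{a_1\dots\hat{a}_i\dots a_{s_1+k}}$ and $p^{l+1,l+1}_{a_1\dots a_{s_1+l}}=\sum_i(-1)^{i+1}x_{la_i}\,p^{l,l}_{a_1\dots\hat{a}_i\dots a_{s_1+l}}$, starting from $p^{1,1}_{a_1,\dots,a_{s_1}}=f_{a_1,\dots,a_{s_1}}$, which is an actual generator of $\mathcal{L}$; the monomial coefficients $z_{ka_i}$ and $x_{la_i}$ are precisely what absorb the extra rows and columns that your peeling argument leaves unexplained. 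Lemma \ref{xTox_y} is then applied once, to convert $p^{l,k}$ into $f^{l,k}$ plus an explicit element of $(g_{ij,lk})$. (Lemma \ref{topx} plays no role here; it is used later in the $S$-pair computations.) Finally, your fallback via the substitution $z_{ij}\mapsto x_{ij}-y_{ij}$ only certifies membership in $\mathcal{K}$, not in $\mathcal{L}$: the equality $\mathcal{L}=\mathcal{K}$ is established afterwards using the Gr\"obner basis in which the $f^{l,k}$ themselves appear, so invoking it at this stage would be circular.
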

\begin{proof}
We first define $p_{a_{1},...,a_{s_{1}+k-1}}^{l,k}$ as follows:\[
p_{a_{1},...,a_{s_{1}+k-1}}^{l,k}=\sum_{r=k}^{s_{2}}(-1)^{r+1}\left|\left[\begin{array}{c}
Z^{l,k-1}\\
Z^{r,r}\\
X^{1,l-1}\\
Y^{1,r-1}\\
X^{r+1,s_{1}}\end{array}\right]_{a_{1},...,a_{s_{1}+k-1}}\right|\]
where $1\leq a_{1}<a_{2}<...<a_{s_{1}+k-1}\leq\mathrm{min}\{t_{1},t_{2}\}$,
and $1\leq l\leq k\leq1$. We notice that $p_{a_{1},...,a_{s_{1}}}^{1,1}=f_{a_{1},...,a_{s_{1}}}$.
We will show $p_{a_{1},...,a_{s_{1}+k-1}}^{l,k}\in\mathcal{L}$. Since
$p_{a_{1}...a_{s_{1}+k}}^{l,k+1}=\sum_{i=1}^{s_{1}+k}(-1)^{i+1}z_{ka_{i}}f_{a_{1}...\hat{a_{i}}...a_{s_{1}+k}}^{l,k}$
and $p_{a_{1}...a_{s_{1}+l}}^{l+1,l+1}=\sum_{i=1}^{s_{1}+l}(-1)^{i+1}x_{la_{i}}p_{a_{1}...\hat{a_{i}}...a_{s_{1}+l}}^{l,l}$,
we have that the $p_{a_{1}...a_{s_{1}+k-1}}^{l,k}$'s are all in $\mathcal{L}\subset\mathcal{J}.$
By Lemma \ref{xTox_y}, we have \[
p_{a_{1},...,a_{s_{1}}}^{l,k}=\sum_{r=k}^{s_{2}}(-1)^{r+1}\left|\left[\begin{array}{c}
Z^{l,k-1}\\
Z^{r,r}\\
X^{1,l-1}\\
Y^{1,r-1}\\
X^{r+1,s_{1}}\end{array}\right]_{a_{1},...,a_{s_{1}+k-1}}\right|\]
\begin{eqnarray*}
= & \sum_{r=k}^{s_{2}}(-1)^{r+1}\left|\left[\begin{array}{c}
Z^{l,k-1}\\
Z^{r,r}\\
X^{1,l-1}\\
Y^{1,r-1}\\
Y^{r+1,s_{1}}\end{array}\right]_{a_{1},...,a_{s_{1}+k-1}}\right|\\
 & +\sum_{r=k}^{s_{2}}(-1)^{r+1}\sum_{u=r+1}^{s_{1}}\left|\left[\begin{array}{c}
Z^{l,k-1}\\
X^{r,r}-Y^{r,r}\\
X^{1,l-1}\\
Y^{1,r-1}\\
Y^{r+1,u-1}\\
Z^{u,u}\\
X^{u+1,s_{1}}\end{array}\right]_{a_{1},...,a_{s_{1}+k-1}}\right|\\
 & +\sum_{r=k}^{s_{2}}(-1)^{r+1}\sum_{u=r+1}^{s_{1}}\sum_{\{c_{1},c_{2},d_{1},...,d_{s_{1}+k-3}\}=\{a_{1},...,a_{s_{1}+k-1}\}}\\
 & \left(\pm(g_{rc_{1},uc_{2}}-g_{rc_{2},uc_{1}})\left|\left[\begin{array}{c}
Z^{l,k-1}\\
X^{1,l-1}\\
Y^{1,r-1}\\
Y^{r+1,u-1}\\
X^{u+1,s_{1}}\end{array}\right]_{d_{1},...,d_{s_{1}+k-3}}\right|\right).\end{eqnarray*}
\begin{eqnarray*}
\end{eqnarray*}
Since $p_{a_{1},...,a_{s_{1}+k-1}}^{l,k}\in\mathcal{L}$, and \begin{eqnarray*}
\sum_{r=k}^{s_{2}}(-1)^{r+1}\sum_{u=r+1}^{s_{1}}\sum_{\{c_{1},c_{2},d_{1},...,d_{s_{1}+k-3}\}=\{a_{1},...,a_{s_{1}+k-1}\}}\\
\pm(g_{rc_{1},uc_{2}}-g_{rc_{2},uc_{1}})\left|\left[\begin{array}{c}
Z^{l,k-1}\\
X^{1,l-1}\\
Y^{1,r-1}\\
Y^{r+1,u-1}\\
X^{u+1,s_{1}}\end{array}\right]_{d_{1},...,d_{s_{1}+k-3}}\right|\in\mathcal{L},\end{eqnarray*}
we have \begin{eqnarray*}
f_{a_{1},...,a_{s_{1}+k-1}}^{l,k} & = & \sum_{r=k}^{s_{2}}(-1)^{r+1}\left|\left[\begin{array}{c}
Z^{l,k-1}\\
Z^{r,r}\\
X^{1,l-1}\\
Y^{1,r-1}\\
Y^{r+1,s_{1}}\end{array}\right]_{a_{1},...,a_{s_{1}+k-1}}\right|\end{eqnarray*}
 \[
+\sum_{r=k}^{s_{2}}(-1)^{r+1}\sum_{u=r+1}^{s_{1}}\left|\left[\begin{array}{c}
Z^{l,k-1}\\
X^{r,r}-Y^{r,r}\\
X^{1,l-1}\\
Y^{1,r-1}\\
Y^{r+1,u-1}\\
Z^{u,u}\\
X^{u+1,s_{1}}\end{array}\right]_{a_{1},...,a_{s_{1}+k-1}}\right|\in\mathcal{L}.\]

\end{proof}
\medskip{}

The following definition is coming from the $S$-pairs of $X_{a_{1},...,a_{s_{1}}}$
and $g_{ij,lk}$ as defined in the Lemma \ref{defineL}.
\begin{defn}
\label{u}Let $1\leq p_{1}\leq m$, $1\leq q_{1}\leq n$, $ $$a_{s_{1}}<...<a_{j}\leq q_{1}<a_{j-1}<...<a_{1}$.
We define $U_{p_{1},q_{1},a_{1},...,a_{s_{1}}}$ as follows: \begin{eqnarray*}
 &  & U_{p_{1},q_{1},a_{s_{1}},...,a_{1}}:=z_{p_{1}q_{1}}\left|\left[\begin{array}{cccccc}
 &  & X^{1,p_{1}-1}\\
x_{p_{1}a_{s_{1}}} & ... & x_{p_{1}a_{j}} & y_{p_{1}a_{j-1}} & ... & y_{p_{1}a_{1}}\\
 &  & Y^{p_{1}+1,s_{1}}\end{array}\right]\right|\\
 &  & +\sum_{k=j+1}^{m}(x_{p_{1}q_{1}}-y_{p_{1}q_{1}})(-1)^{k+p_{1}}z_{p_{1}a_{k}}|X_{a_{1},...,\hat{a_{k}},..a_{m}}^{1,...,\hat{p_{1}},...,m}|\\
 &  & +\sum_{u=p_{1}+1}^{s_{1}}(x_{p_{1}q_{1}}-y_{p_{1}q_{1}})\left|\left[\begin{array}{cccccc}
 &  & X^{1,p_{1}-1}\\
x_{p_{1}a_{s_{1}}} & ... & x_{p_{1}a_{j}} & y_{p_{1}a_{j-1}} & ... & y_{p_{1}a_{1}}\\
 &  & Y^{p_{1}+1,u-1}\\
 &  & Z^{u,u}\\
 &  & X^{u+1,s_{1}}\end{array}\right]_{a_{1},...,a_{s_{1}}}\right|.\end{eqnarray*}

\end{defn}
\medskip{}

\begin{lem}
\label{U}$U_{p_{1}q_{1}a_{1},...,a_{s_{1}+k-1}}\in\mathcal{L}$.\end{lem}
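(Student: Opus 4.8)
The plan is to imitate the proof of Lemma~\ref{finL}, establishing $U_{p_{1}q_{1}a_{1},\dots,a_{s_{1}+k-1}}\in\mathcal{L}$ in two stages: first the base polynomial $U_{p_{1},q_{1},a_{s_{1}},\dots,a_{1}}$ of Definition~\ref{u} (exactly $s_{1}$ columns, no extra rows of $Z$), and then the general element, which is obtained from the base one by the same $Z$-bordering that turns $f_{a_{1},\dots,a_{s_{1}}}$ into $f^{l,k}_{a_{1},\dots,a_{s_{1}+k-1}}$ in Definition~\ref{f}.

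For the base case I would first replace, in the two mixed $X/Y$-determinants occurring in Definition~\ref{u}, the rows of $Y$ lying below row $p_{1}$ by the corresponding rows of $X$; call $V$ the resulting polynomial. The point is that, after this replacement and after using Lemma~\ref{xtox-y} to rewrite the mixed row of type $(x,\dots,x,y,\dots,y)$ appearing in the first term, $V$ becomes (up to sign) the Laplace expansion along its first column of a single $(s_{1}+1)\times(s_{1}+1)$ determinant $D$: the column indices of $D$ are $q_{1}$ together with $a_{1},\dots,a_{s_{1}}$, its first row is $(z_{p_{1}q_{1}},z_{p_{1}a_{s_{1}}},\dots,z_{p_{1}a_{1}})$, its second row is $(x_{p_{1}q_{1}}-y_{p_{1}q_{1}},x_{p_{1}a_{s_{1}}}-y_{p_{1}a_{s_{1}}},\dots,x_{p_{1}a_{1}}-y_{p_{1}a_{1}})$, and its remaining $s_{1}-1$ rows are the rows of $X$ other than row $p_{1}$. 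Expanding $D$ instead by a two-row Laplace expansion along its first two rows gives $D=\sum_{c<c'}\pm\bigl(z_{p_{1}c}(x_{p_{1}c'}-y_{p_{1}c'})-z_{p_{1}c'}(x_{p_{1}c}-y_{p_{1}c})\bigr)\,\Delta_{c,c'}$, in which each $2\times2$ factor is $\pm g_{p_{1}c,p_{1}c'}$, a generator of $\mathcal{L}$, and each $\Delta_{c,c'}$ is an $(s_{1}-1)\times(s_{1}-1)$ minor of $X$. Hence $D\in\mathcal{L}$, so $V\in\mathcal{L}$.

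It then remains to recover $U$ from $V$, i.e.\ to undo the replacement of $X$-rows by $Y$-rows below row $p_{1}$. This is exactly the situation governed by Lemma~\ref{xTox_y} (together with Lemma~\ref{xtox-y}): applying it block by block writes $U-V$ as a sum of terms of two kinds, namely determinants carrying a row $X^{r,r}-Y^{r,r}$ in the correct position (or a repeated specialized row), which are precisely the summands assembled into the polynomials $f^{l,k}$ of Definition~\ref{f} and hence lie in $\mathcal{L}$ by Lemma~\ref{finL}; and terms $\pm(g_{rc_{1},uc_{2}}-g_{rc_{2},uc_{1}})$ times minors of $X$ and $Y$, which lie in $\mathcal{L}$ because the $g$'s are among the generators. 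Thus $U=V+(U-V)\in\mathcal{L}$. For the general element I would argue exactly as in Lemma~\ref{finL}: the $Z$-bordered analogue of $V$, with $s_{1}+k-1$ columns and $k-1$ extra rows of $Z$, satisfies a recursion expressing it as an alternating sum $\sum_{i}(-1)^{i+1}z_{\bullet a_{i}}$ applied to the corresponding polynomial of one lower level with one fewer column --- a Laplace expansion along the newest row of $Z$ --- so that membership in $\mathcal{L}$ propagates up from the base case; a single further application of Lemma~\ref{xTox_y} then turns this into the $Z$-bordered $U$, modulo the $f^{l,k}$'s and $g$-multiples already known to lie in $\mathcal{L}$.

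The step I expect to be the main obstacle is purely bookkeeping: fixing the signs throughout the column- and two-row Laplace expansions of $D$, matching the first-column cofactors of $D$ term-by-term with the three groups of summands in Definition~\ref{u}, and checking that the remainder terms produced by Lemma~\ref{xTox_y} are exactly the summands of the $f^{l,k}$ of Definition~\ref{f}. There is no conceptual difficulty beyond carefully organising these determinant identities; all the tools needed --- Lemmas~\ref{xtox-y}, \ref{xTox_y} and~\ref{finL} --- are already available.
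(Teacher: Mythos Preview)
Your route is genuinely different from the paper's and, as written, has a real gap.

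The paper does not build an auxiliary $(s_{1}+1)\times(s_{1}+1)$ determinant $D$ at all. It starts from the single element
\[
\alpha \;=\; z_{p_{1}q_{1}}\,\bigl|X_{a_{1},\dots,a_{s_{1}}}\bigr|\;\in\;\mathcal{L},
\]
which is in $\mathcal{L}$ because the maximal minors of $X_{s_{1}t_{1}}$ are among the generators. It then rewrites $|X_{a_{1},\dots,a_{s_{1}}}|$ via Lemma~\ref{xtox-y} (row $p_{1}$ and below) and, for every correction term of the form $z_{p_{1}q_{1}}(x_{lk}-y_{lk})$, uses the identity $z_{p_{1}q_{1}}(x_{lk}-y_{lk})=g_{p_{1}q_{1},lk}+z_{lk}(x_{p_{1}q_{1}}-y_{p_{1}q_{1}})$ to swap the factor. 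Collecting the $g$-contributions into an element $\beta\in\mathcal{L}$, what remains is exactly $U_{p_{1},q_{1},a_{s_{1}},\dots,a_{1}}$, so $U=\alpha-\beta\in\mathcal{L}$. No $f^{l,k}$'s are invoked anywhere.

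Your argument misses this simple starting point and instead tries to reach $U$ through an intermediate $V$. Two things go wrong. First, the first-column Laplace expansion of your $D$ is not equal to $V$: the cofactors coming from the $X$-rows of $D$ contribute extra terms $x_{i,q_{1}}\cdot C_{i}$ with $i\neq p_{1}$; these happen to lie in $\mathcal{L}$ (each $C_{i}$ has both a $z_{p_{1}}$-row and an $(x_{p_{1}}-y_{p_{1}})$-row, hence is a $g$-combination), so this is repairable, but it is not what you wrote. Second, and more seriously, your claim that the remainder $U-V$ lies in $\mathcal{L}$ because its pieces ``are precisely the summands assembled into the polynomials $f^{l,k}$'' is incorrect: individual summands of $f^{l,k}$ do not lie in $\mathcal{L}$, only the full sum does, and the determinants produced by Lemma~\ref{xTox_y} when you pass from $V$ to $U$ have the wrong row pattern to reassemble into any $f^{l,k}$ (they carry rows $X^{1,p_{1}-1}$ and a mixed row at position $p_{1}$, not the $Z^{l,k-1}$, $X^{1,l-1}$ block required in Definition~\ref{f}). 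The correct way to handle $U-V$ is again via $g$-swaps on the outer factor $z_{p_{1}q_{1}}$, which brings you straight back to the paper's computation.

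Finally, the ``general element with $s_{1}+k-1$ columns'' in your second paragraph is a ghost: the subscript $a_{s_{1}+k-1}$ in the displayed statement is a typo for $a_{s_{1}}$, as you can see from Definition~\ref{u} and from the paper's own proof, which treats only the $s_{1}$-column object. There is no $Z$-bordered family of $U$'s to establish.
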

\begin{proof}
We use Lemma \ref{xTox_y} on $|X_{a_{1},...,a_{s_{1}}}^{1,s_{1}}|$,
then we have \begin{eqnarray*}
\alpha= & z_{p_{1},q_{1}}|X_{a_{1},...,a_{s1}}|\\
= & z_{p_{1}q_{1}}\left(\left|\left[\begin{array}{cccccc}
 &  & X^{1,p_{1}-1}\\
x_{p_{1}a_{s_{1}}} & ... & x_{p_{1}a_{j}} & y_{p_{1}a_{j-1}} & ... & y_{p_{1}a_{1}}\\
 &  & Y^{p_{1}+1,s_{1}}\end{array}\right]\right|\right.\\
 & +{\displaystyle \sum_{k=j+1}^{m}}(-1)^{k+p_{1}}(x_{p_{1}a_{k}}-y_{p_{1}a_{k}})|X_{a_{1},...,\hat{a_{k}},...,a_{m}}^{1,...,\hat{p_{1}},...,m}|+{\displaystyle \sum_{u=p_{1}+1}^{s_{1}}\sum_{k=1}^{s_{1}}}\\
 & \left.(-1)^{u+k}(x_{ua_{k}}-y_{ua_{k}})\left|\left[\begin{array}{ccc}
 & X^{1,p_{1}-1}\\
x_{p_{1}a_{s_{1}}}... & x_{p_{1}a_{j}}\: y_{p_{1}a_{j-1}}... & y_{p_{1}a_{1}}\\
 & X^{p_{1}+1,u-1}\\
 & X^{u+1,s_{1}}\end{array}\right]_{a_{1},...,\widehat{a_{k}},...,a_{m}}\right|\right).\end{eqnarray*}
 Review the definition of $g_{ij,lk}$. We substitute all the monomials
that are the leading terms of $\{g_{ij,lk}\}$. The above expression
becomes\begin{eqnarray*}
 & z_{p_{1}q_{1}}\left|\left[\begin{array}{ccc}
 & X^{1,p_{1}-1}\\
x_{p_{1}a_{s_{1}}}... & x_{p_{1}a_{j}}\: y_{p_{1}a_{j-1}}... & y_{p_{1}a_{1}}\\
 & Y^{p_{1}+1,s_{1}}\end{array}\right]\right|+{\displaystyle \sum_{k=j+1}^{s_{1}}}(-1)^{p_{1}+k}g_{p_{1}q_{1},p_{1}a_{k}}|X_{a_{1},...,\hat{a_{k}},..a_{m}}^{1,...,\hat{p_{1}},...,m}|\\
 & +{\displaystyle \sum_{k=j+1}^{m}}(x_{p_{1}q_{1}}-y_{p_{1}q_{1}})(-1)^{k+p_{1}}z_{p_{1}a_{k}}|X_{a_{1},...,\hat{a_{k}},..a_{m}}^{1,...,\hat{p_{1}},...,m}|\\
 & +{\displaystyle \sum_{u=p_{1}+1}^{s_{1}}\sum_{k=1}^{m}}(-1)^{k+u}(g_{p_{1}q_{1},u,a_{k}}-g_{p_{1}a_{k},u,q_{1}})\\
 & \left(\left|\left[\begin{array}{ccc}
 & X^{1,p_{1}-1}\\
x_{p_{1}a_{s_{1}}}... & x_{p_{1}a_{j}}\: y_{p_{1}a_{j-1}}... & y_{p_{1}a_{1}}\\
 & Y^{p_{1}+1,u-1}\\
 & X^{u+1,s_{1}}\end{array}\right]_{a_{1},...,\widehat{a_{k}},...,a_{m}}\right|\right)\\
 & {\displaystyle +\sum_{u=p_{1}+1}^{s_{1}}}(x_{p_{1}q_{1}}-y_{p_{1}q_{1}})\left|\left[\begin{array}{cccccc}
 &  & X^{1,p_{1}-1}\\
x_{p_{1}a_{s_{1}}} & ... & x_{p_{1}a_{j}} & y_{p_{1}a_{j-1}} & ... & y_{p_{1}a_{1}}\\
 &  & Y^{p_{1}+1,u-1}\\
 &  & Z^{u,u}\\
 &  & X^{u+1,s_{1}}\end{array}\right]_{a_{1},...,a_{s_{1}}}\right|.\end{eqnarray*}

We define $\beta$ as follows:\begin{eqnarray*}
\beta= & {\displaystyle \sum_{k=j+1}^{s_{1}}(}-1)^{p_{1}+k}g_{p_{1}q_{1},p_{1}a_{k}}|X_{a_{1},...,\hat{a_{k}},..a_{m}}^{1,...,\hat{p_{1}},...,m}|+{\displaystyle \sum_{u=p_{1}+1}^{s_{1}}\sum_{k=1}^{m}}(-1)^{k+u}(g_{p_{1}q_{1},u,a_{k}}-g_{p_{1}a_{k},u,q_{1}})\end{eqnarray*}
\[
\left(\left|\left[\begin{array}{ccc}
 & X^{1,p_{1}-1}\\
x_{p_{1}a_{s_{1}}}... & x_{p_{1}a_{j}}\: y_{p_{1}a_{j-1}}... & y_{p_{1}a_{1}}\\
 & Y^{p_{1}+1,u-1}\\
 & X^{u+1,s_{1}}\end{array}\right]_{a_{1},...,\widehat{a_{k}},...,a_{m}}\right|\right).\]
 $\beta$ is in $\mathcal{L}$ and $\alpha$ is in $\mathcal{L},$
hence $U_{p_{1}q_{1}a_{1},...,a_{s_{1}}}=\alpha-\beta$ is in $\mathcal{L}$. 
\end{proof}
\medskip{}

The following definition is coming from the $S$-pairs of $U_{p,q,a_{1},...,a_{s_{1}}}$
as defined in the Definition \ref{u} and $Y_{a_{1},...,a_{s_{2}}}$
as defined in the Lemma \ref{defineL}. 
\begin{defn}
\label{w}Let $1\leq b_{s_{2}}<...<b_{1}\leq n$, $1\leq p_{1}\leq m$,
$1\leq q_{1}\leq n$, $a_{s_{1}}<...<a_{s_{2}+1}<a_{p_{1}}<...<a_{1}$
and $a_{p_{1}}\leq q_{1}$. Let $i$ be integer so that $1\leq i\leq p$
and $a_{s_{2}+1}<b_{s_{2}}<...<b_{i+1}<a_{p_{1}-1}\leq b_{i}$ and
$b_{l}\neq a_{p_{1}}$ for $l\geq i+1$. 

We define $M_{12}$ as follows:\[
M_{12}=z_{p_{1}q_{1}}x_{p_{1}a_{p_{1}}}\left|\left[\begin{array}{c}
X^{1,p_{1}-1}\\
Y^{s_{2}+1,s_{1}}\end{array}\right]_{a_{1},...,a_{p_{1-1}},a_{s_{2}+1},...,a_{s_{1}}}\right|.\]

We define \begin{eqnarray*}
 & W_{p_{1},q_{1},a_{1},...,a_{p1},a_{s_{2}+1},...,a_{s_{1}},b_{1},...,b_{s_{2}}}:=\\
 & M_{12}|Y_{b_{1},...,b_{s_{2}}}^{1,s_{1}}|-\left|Y_{b_{1},..,b_{i}}^{1,i}\right|{\displaystyle \sum_{\{c_{i+1},...,c_{p_{1}},d_{p_{1}+1},...,d_{s_{2}}\}=\{b_{i+1},....,b_{s_{2}}\}}}\\
 & \left|Y_{c_{i+1},..,c_{p_{1}}}^{i+1,p_{1}}\right|U_{p_{1}q_{1},a_{1},...,a_{p_{1-1}},a_{p_{1}},d_{p_{1}+1},...,d_{s_{2}},a_{s_{2}+1},...,a_{s_{1}}}.\end{eqnarray*}

\medskip{}
\end{defn}
\begin{rem}
From the way we define $W_{p_{1},q_{1},a_{1},...,a_{p_{1}},a_{s_{2}+1},...,a_{s_{1}},b_{1},...,b_{s_{1}}}$,
it is in $\mathcal{L}$. Notice that all the submatrices $|Y_{d_{p_{1}+1},...,d_{s_{2}}}^{p_{1},s_{2}}|$
of $|Y_{b_{1},...,b_{s_{2}}}^{1,s_{2}}|$ such that $a_{s_{2}+1}<d_{s_{2}}<...<d_{p_{1}+1}<a_{p_{1}-1}$
are cancelled. Hence the leading term is \[
\mbox{in}(M_{12}|Y_{b_{1},...,b_{i-1}}^{1,i-1}||Y_{b_{i+1},...,b_{p_{1+1}}}^{i,p_{1}}||Y_{b_{s_{2}},...,b_{p_{1}+2},b_{i}}^{p_{1}+1,s_{2}}|).\]

\medskip{}

\end{rem}
The following definition is coming from the $S$-pairs of $W_{p,q,a_{1},...,a_{p},a_{s_{2}+1},...,a_{s_{1}},b_{1},...,b_{s_{1}}}$
as defined in the Definition \ref{w} and $U_{p,q,a_{1},...,a_{s_{1}}}$
as defined in the Definition \ref{u}.
\begin{defn}
\label{wp}Let $1\leq p_{1}\leq m$, $1\leq q_{1}\leq n$, $v=p_{1}+1,...,s_{2}-1$,
$1\leq a_{s_{1}}<...<a_{s_{2}+1}<a_{p_{1}}<...<a_{1}\leq t_{1}$ and
$a_{p_{1}}\leq q_{1}$. Let $i$ be integer so that $1\leq i\leq p$
and let $a_{s_{2}+1}<b_{s_{2}}<...<b_{v+2}<b_{v}^{'}<...<b_{p_{1}+1}^{'}<b_{p_{1}}<...<b_{i+1}<a_{p_{1}-1}\leq b_{p_{1}+1}$
and $b_{l}^{'}\neq a_{p_{1}}$ for $l\geq i+1$ and $b_{v-1}^{'}\leq b_{v+1}$.
Let $a_{s_{1}}<....<a_{s_{2}+1}<b_{s_{2}}<...<b_{v+2}<b_{v+1}<b_{v}<b_{v-1}<...<b_{p_{1}+2}<a_{p_{1}}<a_{p_{1}-1}\leq b_{p_{1}+1}$,
and $b_{r}^{'}\leq b_{r+2}<b_{r+1}$ for $r=p_{1},...,v-2$. 

We define \begin{eqnarray*}
 & W_{p_{1},q_{1},a_{1},...,a_{p1},a_{s_{2}+1},...,a_{s_{1}},b_{1},...,b_{p_{1}+1},b_{p_{1}+2},b_{p_{1}+3}.,b_{s_{2}},b_{p_{1}+1}^{'},b_{p_{1}+2}^{'},...,b_{v}^{'}}^{p_{1}+1,v}:=\\
 & y_{v-1,b_{v-1}^{'}}y_{v,b_{v}}W_{p_{1},q_{1},a_{1},...,a_{p1},a_{s_{2}+1},...,a_{s_{1}},b_{1},...,b_{v-1},b_{v}^{'},b_{v+1},...,b_{s_{2}},b_{p_{1}+1}^{'},b_{p_{1}+2}^{'},...,b_{v-2}^{'}}^{p_{1}+1,v-2}\\
 & -y_{v,b_{v}^{'}}W_{p_{1},q_{1},a_{1},...,a_{p1},a_{s_{2}+1},...,a_{s_{1}},b_{1},...,b_{s_{2}},b_{p_{1}+1}^{'},b_{p_{1}+2}^{'},...,b_{v-1}^{'}}^{p_{1}+1,v-1}.\end{eqnarray*}
Here \[
W_{p_{1},q_{1},a_{1},...,a_{p_{1}},a_{s_{2}+1},...,a_{s_{1}},b_{1},...,b_{s_{2}}}^{p_{1}+1,p_{1}-1}=U_{p_{1},q_{1},a_{1},...,a_{s_{1}}}\]
 and \[
W_{p_{1}q_{1},a_{1},...,a_{p_{1}},a_{s_{2}+1},...,a_{s_{1}},b_{1},...,b_{s_{2}}}^{p_{1}+1,p_{1}}=W_{p_{1}q_{1},a_{1},..,a_{p_{1}},a_{s_{2}+1},...,a_{s_{1}},b_{1},...,b_{s_{2}}}.\]

\end{defn}
\medskip{}

\begin{rem}
From the way we define $W_{p_{1},q_{1},a_{1},...,a_{p_{1}},a_{s_{2}+1},...,a_{s_{1}},b_{1},...,b_{s_{2}},b_{p_{1}+1}^{'},...,b_{v}^{'}}^{p_{1}+1,v}$,
it is clear that it belongs to $\mathcal{L}$. Notice it has leading
term \begin{eqnarray*}
\mbox{in}(z_{p_{1}q_{1}}\left|\left[\begin{array}{c}
X^{1,p_{1}}\\
Y^{s_{2}+1,s_{1}}\end{array}\right]_{a_{1},...,a_{p_{1}},a_{s_{2}+1},...,a_{s_{1}}}\right||Y_{b_{1},...,b_{i-1}}^{1,i-1}||Y_{b_{i+1},...,b_{p_{1+1}}}^{i,p_{1}}|\\
y_{p_{1}+1,b_{i}}y_{p_{1}+1,b_{p_{1}+1}}y_{p_{1}+2,b_{p_{1}+2}}y_{p_{1}+2,b_{p_{1}+2}^{'}}...y_{v-1,b_{v-1}^{'}}y_{v-1,b_{v-1}^{'}}y_{vb_{v}}y_{v,b_{v}^{'}}\\
|Y_{b_{s_{2}},...,b_{v+2},b_{v+1}}^{l+1,s_{2}}|).\end{eqnarray*}

\medskip{}

\end{rem}
The following definition is coming from the $S$-pairs of $f_{a_{1},...,a_{s_{1}+k-1}}^{l,k}$
as defined in the Definition \ref{f} and $Y_{a_{1},...,a_{s_{2}}}$
as defined in the Definition \ref{defineL}.
\begin{defn}
\label{v}Let $b_{s_{1}}<...<b_{1}$, and $1\leq p_{l}<...<p_{k}<b_{s_{1}}<...<b_{s_{2}+1}<c_{s_{2}}<...<c_{k+1}<b_{k-1}<...<b_{1}<a_{k-1}<...<a_{1}\leq t_{1}$. 

Let \[
M_{12}=\left|\left[\begin{array}{c}
Z^{l,k}\\
X^{1,k-1}\\
Y^{s_{2}+1,s_{1}}\end{array}\right]_{p_{l},..,p_{k},a_{1},...,a_{k-1},b_{s_{2}+1},...,b_{s_{1}}}\right|.\]

We define \begin{eqnarray*}
 & V_{p_{l},...,p_{k},a_{1},...,a_{k-1},b_{1},...,b_{s_{1}}}:=\\
 & M_{12}|Y_{b_{1},...,b_{s_{2}}}^{1,s_{2}}|{\displaystyle -\sum_{\{e_{k},c_{k+1},...,c_{s_{2}}\}=\{b_{k},....,b_{s_{2}}\}}}\\
 & \pm y_{ke_{k}}f_{p_{l},...,p_{k},a_{1},...,a_{k-1},b_{1},...,b_{k-1},c_{k+1},...,c_{s_{2}},b_{s_{2}+1},...,b_{s_{1}}}^{l,k}.\end{eqnarray*}
\end{defn}
\begin{rem}
From the way we define $V_{p_{l},...,p_{k},a_{1},...,a_{k-1},b_{1},...,b_{s_{1}}}$,
it is in $\mathcal{L}$. Notice the submatrices $|Y_{c_{k+1},...,c_{s_{2}}}^{k+1,s_{2}}|$
of $|Y_{b_{1},...,b_{s_{2}}}^{1,s_{2}}|$ such that $b_{s_{2}+1}<c_{s_{2}}<...<c_{k+1}<b_{k-1}$
are cancelled. Hence the leading term of $V_{p_{l},...,p_{k},a_{1},...,a_{k-1},b_{1},...,b_{s_{1}}}$
is \[
\mbox{in}(M_{12}|Y_{b_{1},..,b_{k-2}}^{1,k-2}|y_{k-1,b_{k}}|Y_{b_{s_{2}},...,b_{k+1},b_{k-1}}^{k,s_{2}}|).\]

\end{rem}
\medskip{}

The following definition is coming from the $S$-pairs of elements
in \linebreak{}
$\{V_{p_{1},..,p_{k},a_{1},...,a_{k-1},b_{1},...,b_{s_{1}}}\}$
as defined in the Definition \ref{v}.
\begin{defn}
\label{vk}Let $1\leq l\leq k\leq s_{2}$ and $1\leq p_{l}<...<p_{k}<b_{s_{1}}<...<b_{s_{2}+1}<...<b_{k+1}<b_{k-1}<b_{k}<b_{k-2}<...<b_{1}<a_{l-1}<...<a_{1}\leq t_{1}$.
Let $w=k,...,s_{2}-1$ and $1\leq b_{s_{2}}<...<b_{w+2}<b_{w}^{'}<b_{w-1}^{'}<...<b_{k}^{'}<b_{k-1}<b_{k-2}...<b_{1}\leq t_{2}$
and $b_{w-1}^{'}\leq b_{w+1}$, and $b_{r}^{'}\leq b_{r+2}<b_{r+1}$
for $r=k,...,l-2$. 

We define \begin{eqnarray*}
 & V_{p_{l},...,p_{k},a_{1},...,a_{k-1},b_{1},...,b_{s_{1}},b_{k}^{'},b_{k+1}^{'},...,b_{w}^{'}}^{k,w}:=\\
 & y_{w-1,b_{w-1}}y_{w,b_{w}}V_{p_{l},...,p_{k},a_{1},...,a_{k-1},b_{1},..,b_{w}^{'},...,b_{s_{1}},b_{k}^{'},b_{k+1}^{'},...,b_{w-2}^{'}}^{k,w-2}\\
 & -y_{wb_{w}^{'}}V_{p_{l},...,p_{k},a_{1},...,a_{k-1},b_{1},...,b_{s_{1}},b_{k}^{'},b_{k+1}^{'},...,b_{w-1}^{'}}^{k,w-1} & .\end{eqnarray*}
Here $ $$V_{p_{l},...,p_{k},a_{1},...,a_{k-1},b_{1},...,b_{s_{1}}}^{k,k-2}=V_{p_{l},...,p_{k},a_{1},...,a_{k-1},b_{1},...,b_{s_{1}}}^{k,k-1}=V_{p_{l},...,p_{k},a_{1},...,a_{k-1},b_{1},...,b_{s_{1}}}$.\end{defn}
\begin{rem}
From the way we define $V_{p_{l},...,p_{k},a_{1},...,a_{k-1},b_{1},...,b_{s_{1}},b_{k}^{'},b_{k+1}^{'},...,b_{l}^{'}}^{k,w}$,
it is in $\mathcal{L}$. It has leading term \[
\mbox{in}(M_{12}|Y_{b_{1},..,b_{k-2}}^{1,k-2}|y_{k-1,b_{k-1}}y_{kb_{k}}y_{kb_{k}^{'}}...y_{lb_{w}}y_{lb_{w}^{'}}|Y_{b_{s_{2}},...,b_{w+1}}^{l+1,s_{2}}|).\]

\end{rem}
\medskip{}

The following definition is coming from the $S$-pairs of $g_{ij,lk}$
as defined in the Definition \ref{defineL} and $f_{a_{1},...,a_{s_{1}+k-1}}^{l,k}$
as defined in the Definition \ref{f}.
\begin{defn}
\label{h}Let $1\leq l\leq k\leq s_{2}$, $1\leq q\leq n$, $1\leq a_{s_{1}+k-1}<....<a_{1}\leq t_{1}$,
$a_{s_{1}+k-1}<q$, $a_{j+1}\leq q<a_{j}$ for some $j=l-1,...,s_{1}+k-3$.
Let $\overline{f_{a_{1},...,\hat{a_{c}},...,a_{s_{1+k-1}}}^{l,k,x_{l-1}}}$
be the determinant of matrices that coming from deleting row $x_{l-1}$
and column $a_{c}$. We define $H_{a_{1},....,a_{s_{1}+k-1}}^{l,k,q}$
as follows \begin{eqnarray*}
H_{a_{1},...,a_{s_{1}+k-1}}^{l,k,q} & = & z_{l-1,q}f_{a_{1},...,a_{s_{1}+k-1}}^{l,k}-\sum_{c=k}^{j}(-1)^{k+c}g_{l-1,q,l-1,a_{c}}\overline{f_{a_{1},...,\hat{a_{c}},...,a_{s_{1+k-1}}}^{l,k,x_{l-1}}}.\end{eqnarray*}
\end{defn}
\begin{rem}
It is clear that $H_{a_{1},...,a_{s_{1}+k-1}}^{l,k,q}$ is in $\mathcal{L}$
from the way we define it. Notice in the row $x_{l}$ of $f_{a_{1},...,a_{s_{1}+k-1}}^{l,k}$,
the $x_{l,a_{c}}$ are cancelled by the $g_{l-1,q,l-1a_{c}}$. Hence
the leading term of $H_{a_{1},...,a_{s_{1}+k-1}}^{l,k,q}$ is\\
 \[
z_{l-1,q}\mbox{in}\left(\left|Z_{p_{l},...,p_{k}}^{l,k}\right|x_{l-1,a_{j+1}}\left|X_{a_{1},...,a_{l-2}}^{1,l-2}\right|\left|\left[\begin{array}{c}
Y^{1,k-1}\\
Y^{k+1,s_{1}}\end{array}\right]_{b_{1},...,b_{k-1},b_{k+1},...,b_{s_{1}}}\right|\right).\]
Here $p_{i}\neq a_{j+1}$, $b_{i}\neq a_{j+1}$ for all $i$. \medskip{}

\end{rem}
The following definition is coming from the $S$-pairs of $H_{a_{1},...,a_{s_{1}+k-1}}^{l,k,q}$
as defined in the Definition \ref{h} and $Y_{a_{1},...,a_{s_{2}}}$
as defined in the Definition \ref{defineL}.
\begin{defn}
\label{i}Let $1\leq l\leq k\leq s_{2}$, $1\leq q\leq n$, $1\leq a_{s_{1}+k-1}<....<a_{1}\leq t_{1}$,
$a_{s_{1}+k-1}<q$, $a_{j+1}\leq q<a_{j}$ for some $j=l-1,...,s_{1}+k-3$.
Let $a_{l+s_{2}-1}<b_{s_{2}}<...<b_{k}<a_{l-1+k-1}=b_{k-1}<....<a_{l-1+1}=b_{1}$. 

Let \[
M=z_{l-1,q}x_{l-1,a_{j+1}}\left|\left[\begin{array}{c}
Z^{l,k}\\
X^{1,l-2}\\
Y^{s_{2}+1,s_{1}}\end{array}\right]_{a_{s_{1}+k-1},...,a_{l+s_{2}-1},a_{l-1},...,a_{1}}\right|.\]

We define \begin{eqnarray*}
 &  & I_{a_{s_{1}+k-1},...,a_{l+s_{2}-1},a_{l-1},...,a_{1},b_{1},...,b_{s_{2}}}^{l,k,q}:=\\
 &  & M|Y_{b_{1},...,b_{s_{2}}}^{1,s_{2}}|-\sum_{\{e_{k},c_{k+1},...,c_{s_{2}}\}=\{b_{k},....,b_{s_{2}}\}}\\
 &  & \pm y_{ke_{k}}H_{a_{s_{1}+k-1},...,a_{l+s_{2}-1},c_{s_{2}},...,c_{k+1},b_{k-1},...,b_{1},a_{l-1},...,a_{1}}^{l,k,q}.\end{eqnarray*}
\end{defn}
\begin{rem}
It is clear that $I_{a_{s_{1}+k-1},...,a_{l+s_{2}-1},a_{l-1},...,a_{1},b_{1},...,b_{s_{2}}}^{l,k,q}$
is in $\mathcal{L}$ from the way we define it. Notice that the submatrices
$|Y_{c_{k+1},...,c_{s_{2}}}^{k+1,s_{2}}|$ of $|Y_{b_{1},...,b_{s_{2}}}^{1,s_{2}}|$
with $a_{l+s_{2}-1}<c_{s_{2}}<...<c_{k+1}<b_{k-1}$ are cancelled
by $H^{l,k,q}$'s, hence the leading term of \\
$I_{a_{s_{1}+k-1},...,a_{l+s_{2}-1},a_{l-1},...,a_{1},b_{1},...,b_{s_{2}}}^{l,k,q}$
is\\
 \begin{eqnarray*}
\mbox{in}\left(z_{l-1,q}x_{l-1,a_{j+1}}\left|\left[\begin{array}{c}
Z^{l,k}\\
X^{1,l-2}\\
Y^{s_{2}+1,s_{1}}\end{array}\right]_{a_{s_{1}+k-1},...,a_{l+s_{2}-1},a_{l-1},...,a_{1}}\right|\right.\\
\left.y_{k-1,b_{k}}\left|\left[\begin{array}{c}
Y^{1,k-2}\\
Y^{k,s_{2}}\end{array}\right]_{b_{s_{2}},...,b_{k+1},b_{k-1},b_{k-2}...b_{1}}\right|\right).\end{eqnarray*}

\end{rem}
\medskip{}

The following definition is coming from the $S$-pairs of elements
of \linebreak{}
$\{I_{q_{l},...,q_{k},b_{s_{1}},...,b_{s_{2}},...,b_{1},a_{l-1},...,a_{1}}^{l,k,q}\}$.
\begin{defn}
\label{ik}Let $1\leq l\leq k\leq s_{2}$, $1\leq q\leq n$, $k\leq w\leq s_{2}-1$,
$1\leq q_{l}<...<q_{k}<b_{s_{1}}<...<b_{s_{2}}<...<b_{k+1}<b_{k-1}<b_{k}<b_{k-2}<...<b_{1}<a_{l-2}<...<a_{1}\leq t_{1}$,
$q_{l}<q$, $q_{l}<a_{l-1}\leq q$ . Let $w=k,...,s_{2}-1$ and $1\leq b_{s_{2}}<...<b_{w+2}<b_{w}^{'}<b_{w-1}^{'}<...<b_{k}^{'}<b_{k-1}<b_{k-2}...<b_{1}\leq t_{2}$
and $b_{w-1}^{'}\leq b_{w+1}$, and $b_{r}^{'}\leq b_{r+2}<b_{r+1}$
for $r=k,...,l-2$. 

We define \begin{eqnarray*}
 & ^{k,w}I_{q_{l},...,q_{k},b_{s_{1}},...,b_{s_{2}},...,b_{1},a_{l-1},...,a_{1},b_{k}^{'},...,b_{w}^{'}}^{l,k,q}:=\\
 & y_{w-1,b_{w-1}^{'}}y_{wb_{w}}{}^{k,w-2}I_{q_{l},...,q_{k},b_{s_{1}},...,b_{s_{2}},...,b_{w}^{'},...,b_{1},a_{l-1},...,a_{1},b_{k}^{'},...,b_{w-2}^{'}}^{l,k,q}\\
 & -y_{wb_{w}^{'}}{}^{k,w-1}I_{q_{l},...,q_{k},b_{s_{1}},...,b_{s_{2}},...,b_{1},a_{l-1},...,a_{1},b_{k}^{'},...,b_{w-1}^{'}}^{l,k,q}.\end{eqnarray*}
Here \begin{eqnarray*}
^{k,k-2}I_{q_{l},...,q_{k},b_{s_{1}},...,b_{s_{2}},...,b_{1},a_{l-1},...,a_{1}}^{l,k,q} & ={}^{k,k-1}I_{q_{l},...,q_{k},b_{s_{1}},...,b_{s_{2}},...,b_{1},a_{l-1},...,a_{1}}^{l,k,q}\\
=I_{q_{l},...,q_{k},b_{s_{1}},...,b_{s_{2}},...,b_{1},a_{l-1},...,a_{1}}^{l,k,q} &  & .\end{eqnarray*}
\end{defn}
\begin{rem}
From the way we define $^{k,w}H_{a_{s_{1}+k1-},...,a_{l-1+s_{2}}b_{s_{2}},...,b_{1},a_{l-1},...,a_{1},b_{k}^{'},...,b_{w}^{'}}^{l,k,q}$,
it is in $\mathcal{L}$. The leading term of $^{k,w}H_{a_{s_{1}+k1-},...,a_{l-1+s_{2}}b_{s_{2}},...,b_{1},a_{l-1},...,a_{1},b_{k}^{'},...,b_{w}^{'}}^{l,k,q}$
is \begin{eqnarray*}
\mbox{in}\left(z_{l-1,q}x_{l-1,a_{j+1}}\left|\left[\begin{array}{c}
Z^{l,k}\\
X^{1,l-2}\\
Y^{s_{2}+1,s_{1}}\end{array}\right]_{a_{s_{1}+k-1},...,a_{l+s_{2}-1},a_{l-1},...,a_{1}}\right|\right.\\
\left.y_{k-1,b_{k-1}}y_{kb_{k}}y_{kb_{k}^{'}}...y_{wb_{w}}y_{wb_{w}^{'}}\left|\left[\begin{array}{c}
Y^{1,k-2}\\
Y^{w+1,s_{2}}\end{array}\right]_{b_{s_{2}},...,,b_{w+1},b_{k-2}...b_{1}}\right|\right).\end{eqnarray*}

\end{rem}
\medskip{}

We are now ready to find the Gröbner basis of $\mathcal{L}$.

\medskip{}

\begin{thm}
\label{GB}Use the notation of Definition \ref{f}, \ref{u}, \ref{w},
\ref{wp}, \ref{v}, \ref{vk}, \ref{h}, \ref{i}, \ref{ik} and
let $\mathcal{G}:=\{|X_{a_{1},..,a_{s_{1}}}^{1,s_{1}}|$, $|Y_{b_{1},...,b_{s_{2}}}^{1,s_{2}}|$,
$g_{p_{1}q_{1},p_{2}q_{2}}$, $f_{a_{1},...,a_{s_{1}+k-1}}^{l,k}$,
$U_{p_{1},q_{1},a_{1},...,a_{s_{1}}}$ , \\
$W_{p,q,a_{1},...,a_{p},a_{s_{2}+1},...,a_{s_{1}},b_{1},...,b_{s_{2}}}$,
\\
$W_{p_{1},q_{1},a_{1},...,a_{p1},a_{s_{2}+1},...,a_{s_{1}},b_{1},...,b_{p_{1}+1},b_{p_{1}+2},b_{p_{1}+3}.,b_{s_{2}},b_{p_{1}+1}^{'},b_{p_{1}+2}^{'},...,b_{v}^{'}}^{p_{1}+1,v}$,
\\
$V_{p_{l},...,p_{k},a_{1},...,a_{k-1},b_{1},...,b_{s_{1}}}$, $V_{p_{l},...,p_{k},a_{1},...,a_{k-1},b_{1},...,b_{s_{1}},b_{k}^{'},b_{k+1}^{'},...,b_{w}^{'}}^{k,w}$,
$H_{a_{1},...,a_{s_{1}+k-1}}^{l,k,q}$ ,\\
 \textup{$I_{a_{s_{1}+k-1},...,a_{l+s_{2}-1},a_{l-1},...,a_{1},b_{1},...,b_{s_{2}}}^{l,k,q}$,}
$^{k,w}I_{q_{l},...,q_{k},b_{s_{1}},...,b_{s_{2}},...,b_{1},a_{l-1},...,a_{1},b_{k}^{'},...,b_{w}^{'}}^{l,k,q}\}$. 

The $\mathcal{G}$ is a Gröbner basis of $\mathcal{L}$ with respect
to the lexicographic term order and the variables ordered by $z_{ij}>x_{lk}>y_{pq}$
for any $i,j,l,k,p,q$ and $x_{ij}<x_{lk}$, $y_{ij}<y_{lk}$ if $i>l$
or $i=l$ and $j<k$ and $z_{ij}<z_{lk}$ if $i>l$ or if $i=l$ and
$j>k$.
\end{thm}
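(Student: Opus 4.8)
The plan is to verify Buchberger's Criterion for the generating set $\mathcal{G}$: we must show that the $S$-pair of every pair of elements of $\mathcal{G}$ has zero remainder after division by $\mathcal{G}$. The first step is to record the leading terms of all generators under the given lexicographic order; most of these are already identified in the remarks following Definitions \ref{f}, \ref{u}, \ref{w}, \ref{wp}, \ref{v}, \ref{vk}, \ref{h}, \ref{i}, \ref{ik}. With leading terms in hand, one observes that each $S$-pair falls into one of a bounded number of combinatorial types, organized by which two families of generators are involved (e.g. $X$-minor against $g$, $g$ against $f^{l,k}$, $Y$-minor against $U$, $U$ against $W^{p+1,v}$, and so on). The workhorse is Lemma \ref{MandM}: whenever the $S$-pair of two elements $P^u$ and $P^v$ can be matched up via the ``bar'' construction of Definition \ref{TopM} so that the hypotheses on leading terms hold, and the lower-order terms $\sum_{i\ge 2}\overline{P^u_i}$ and $\sum_{i\ge 2}\overline{P^v_i}$ lie in $(\mathcal{G})$ with smaller leading term, the remainder vanishes automatically. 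So the bulk of the proof is to check, case by case, that the defining polynomials were constructed precisely to make these hypotheses hold — which is why each $W$, $V$, $H$, $I$ above was introduced as ``coming from the $S$-pair of'' two earlier families.

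Concretely, I would proceed in the order the generators were introduced, since the later families are manufactured to absorb the $S$-pairs of the earlier ones. First handle the ``classical'' pairs: $S$-pairs among the $X$-minors and among the $Y$-minors are the usual Plücker relations and reduce by the straightening law (these minors form a Gröbner basis of the determinantal ideal on their own); $S$-pairs among the $g_{ij,lk}$ reduce by Lemma \ref{g_ij}; $S$-pairs of an $X$-minor (or $Y$-minor) with a $g$ reduce using Lemma \ref{xTox_y} and Lemma \ref{topx}, and the obstruction is exactly cancelled by the definition of $U$ (resp. by the symmetry exchanging $X$ and $Y$). Next, $S$-pairs of $g$ with $f^{l,k}$ produce $H^{l,k,q}$; $S$-pairs of $Y$-minors with $U$ produce $W$; $S$-pairs of $Y$-minors with $H^{l,k,q}$ produce $I^{l,k,q}$; $S$-pairs of $Y$-minors with $f^{l,k}$ produce the $V$'s; and $S$-pairs within the families $\{W^{p+1,v}\}$, $\{V^{k,w}\}$, $\{{}^{k,w}I^{l,k,q}\}$ are handled by the recursive telescoping identities in Definitions \ref{wp}, \ref{vk}, \ref{ik} — each such identity is itself a ``syzygy'' expressing that the $S$-pair of two members of a family is a combination of members of the family with strictly smaller leading term, so Lemma \ref{MandM} (or a direct application of Buchberger's Criterion) closes the case. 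Throughout, one must also confirm that $S$-pairs involving members from two \emph{different} new families (say a $W$ and an $I$) have leading terms whose least common multiple is already ``covered'' — typically because the leading monomials are relatively prime in a useful sense, or because one leading term divides the lcm in a way that lets Buchberger's first criterion (coprime leading terms) apply.

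The main obstacle is bookkeeping: the index conventions on the $W$, $V$, $H$, $I$ families (the interleaving inequalities among the $a$'s, $b$'s, $b'$'s, $c$'s, $p$'s, and $q$) are intricate, and for each $S$-pair one must check that the leading term really is what the corresponding remark claims, that the ``bar'' extension in Definition \ref{TopM} produces matching top terms (the hypothesis $\mathrm{in}(m_{21}P^u)=\mathrm{in}(|M_{21}|P^u)=\mathrm{in}(\overline{P^u})=\mathrm{in}(\overline{P^u_1})$ of Lemma \ref{MandM} is delicate and can fail if the column sets overlap the wrong way), and that every lower-order contribution genuinely lies in $\mathcal{L}$ with a strictly smaller leading term so that the reduction terminates. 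I expect the hardest single case to be the $S$-pairs internal to the $\{V^{k,w}\}$ and $\{{}^{k,w}I^{l,k,q}\}$ families, where the recursion in Definitions \ref{vk} and \ref{ik} has to be shown to exhaust all obstructions — i.e., that no \emph{new} generator is forced and the process stabilizes — which amounts to a finite but delicate induction on the parameter $w$ (equivalently on the number of $Y$-rows that have been ``used up''). Once all $S$-pair remainders are shown to vanish, Buchberger's Criterion gives that $\mathcal{G}$ is a Gröbner basis of $\mathcal{L}$, completing the proof of Theorem \ref{GB}.
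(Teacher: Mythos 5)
Your proposal is correct and follows essentially the same route as the paper: invoke Buchberger's Criterion, dismiss pairs with coprime leading terms, and reduce the remaining $S$-pairs case by case according to which families the two generators belong to, with Lemma \ref{MandM} as the workhorse and with the families $U$, $W$, $W^{p_1+1,v}$, $V$, $V^{k,w}$, $H$, $I$, ${}^{k,w}I$ serving exactly as the absorbers of the $S$-pairs they were defined from, the recursive families handling the telescoping cases. The paper's proof is precisely this plan carried out in a sequence of lemmas (including the one genuinely computational case, the $S$-pair of $f^{l_1,k_1}$ with $f^{l_2,k_2}$), so your outline matches its structure, differing only in that you appeal to the straightening law for minor-versus-minor pairs where the paper reruns the repeated-row argument directly.
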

We break up the proof of the above theorem into a sequence of lemmas
when we treat $S$-pairs between elements of $\mathcal{G}$. We only
have to compute the $S$-pairs of elements whose leading terms are
not relative prime. In each lemma, we show $h_{P,Q}=0$ for some $P,Q$
in $\mathcal{G}$. We define a order on pair $(i,j)$ with $1\leq i\leq m$,
$1\leq j\leq n$. We say $(i,j)>(l,k)$ if $i<l$ or $i=l$ and $j<k$.
This is a total order. 
\begin{lem}
$h_{P,Q}=0$ when $P$ and $Q$ are in the same group of $\mathcal{G}$.\end{lem}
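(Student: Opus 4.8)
The plan is to prove that the $S$-pair remainder $h_{P,Q}$ vanishes whenever $P$ and $Q$ lie in the same one of the families listed in $\mathcal{G}$, handling the families one at a time. Most of these families are parametrized by column indices, and the governing principle (already packaged in Lemma \ref{MandM}) is that an $S$-pair between two determinant-type elements sharing a set of rows can be realized as a Plücker/Laplace-type identity: cross-multiplying the leading monomials fills in extra columns, and the resulting expanded polynomial is a signed sum of lower elements of the same family with strictly smaller leading terms. So for each family I would identify $M_{12}$, $M_{21}$, $\overline{P}$, $\overline{Q}$ as in Definition \ref{TopM}, verify the two chains of leading-term equalities required by the hypothesis of Lemma \ref{MandM}, and check that the tail sums $\sum_{i\ge 2}\overline{P_i}$ and $\sum_{i\ge 2}\overline{Q_i}$ are $\mathcal{L}$-combinations of elements of $\mathcal{G}$ with leading term below $\mathrm{in}(\overline{P_1})$; then Lemma \ref{MandM} gives $h_{P,Q}=0$ immediately.

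First I would dispose of the easy groups. For $P,Q$ both of the form $|X_{a_1\dots a_{s_1}}^{1,s_1}|$ (or both $|Y_{b_1\dots b_{s_2}}^{1,s_2}|$), the $S$-pair is the classical Plücker relation among maximal minors of a generic matrix, which lies in the ideal generated by those same minors with lower leading terms — this is standard and needs only a citation to the Gröbner-basis theory of determinantal ideals. For $P,Q$ both equal to $g_{ij,lk}$'s, the leading terms are $z$-monomials of degree one, so two such share a leading variable only in very restricted configurations; Lemma \ref{g_ij} and Lemma \ref{Swithchg_ij} are exactly the identities that rewrite the relevant $3$-term and transposed $2$-term combinations back into the $g$-family (plus Koszul pieces), and I would invoke them directly.

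The substantive work is in the families $f^{l,k}$, $U$, $W$, $W^{p_1+1,v}$, $V$, $V^{k,w}$, $H$, $I$, and $^{k,w}I$. For each of these the leading term was already computed in the Remark following its definition, so the plan is: (1) read off from those Remarks which pairs $(P,Q)$ in the family have non-coprime leading terms — these are governed by the overlap pattern of the column indices; (2) for such a pair, compute $m_{21}P - m_{12}Q$ and expand using Lemma \ref{xtox-y}, Lemma \ref{xTox_y}, and Lemma \ref{topx} to convert $x$-rows to $y$-rows modulo $g$'s and modulo $I_{s_2}(Y)$; (3) recognize the result as the defining recursion of the \emph{same} family (this is precisely why Definitions \ref{wp}, \ref{vk}, \ref{ik} were set up as two-term recursions in the superscript index $v$ or $w$ — the $S$-pair of two members at level $v-1$ and $v-2$ \emph{is} the member at level $v$, up to sign and lower terms), so the remainder is zero by Lemma \ref{MandM}. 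The elements $f^{l,k}_{\dots}$, $U_{\dots}$, $H^{l,k,q}_{\dots}$ play the role of base cases of these recursions, as recorded in Definitions \ref{wp}, \ref{vk}, \ref{ik}.

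The main obstacle will be the bookkeeping of signs and of the cancellation of the ``middle'' submatrices. In each Remark it is asserted that certain submatrices of $|Y_{b_1\dots b_{s_2}}|$ (those whose column sets interleave the $a$'s in a forbidden way) cancel; making this precise requires carefully matching the Laplace expansion of $M_{12}|Y_{\dots}|$ against the sum defining the family element, and tracking that the surviving terms reassemble into exactly $\overline{P_1}$ plus genuinely lower terms. I expect the $W^{p_1+1,v}$ and $^{k,w}I$ cases to be the worst, since there the leading term is a product of many $y$-variables coming from a telescoping product and one must check the telescoping is exact. The strategy to control this is to always reduce to the two-step recursion: never expand a general $S$-pair from scratch, but instead express $P$ and $Q$ via the recursion in their superscript, so that $m_{21}P-m_{12}Q$ collapses to a single lower-level element by the very definition, and only the base-case $S$-pairs (between $f$'s, between $U$'s, between $H$'s) need a hands-on Plücker computation — and those are low-degree enough to be manageable.
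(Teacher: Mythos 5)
Your overall framework (Definition \ref{TopM} plus Lemma \ref{MandM}, with the classical Gr\"obner-basis fact for maximal minors covering the $X$--$X$ and $Y$--$Y$ pairs) is the right one, but the plan misses the single observation that the paper's proof of this particular lemma turns on, and the mechanism you propose for the hard families is aimed at the wrong $S$-pairs. When $P$ and $Q$ lie in the \emph{same} group, they have identical row structure and differ only in their column indices; consequently $m_{12}$ and $m_{21}$ involve the same rows and the two extended polynomials coincide, $\overline{P_{a_{1},\dots,a_{q_{u}},d_{1},\dots,d_{p_{21}}}^{u}}=\overline{Q_{b_{1},\dots,b_{q_{v}},c_{1},\dots,c_{p_{12}}}^{v}}$. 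The entire $S$-pair computation therefore reduces to analyzing this one common polynomial: each of its summands either has a repeated row (so vanishes), or contains all the rows $y_{1},\dots,y_{s_{2}}$ or the row pattern of Lemma \ref{topx} (so reduces modulo $I_{s_{2}}(Y)$ and the $g$'s), with exactly one genuinely exceptional case --- two $f^{l,k}$'s whose column lists agree except in the $(k-l+1)$-st entry, where the common extension does not vanish termwise but reassembles, via the proof of Lemma \ref{finL}, into $f^{l,k+1}$, which is again in $\mathcal{G}$. Your plan neither identifies the coincidence $\overline{P}=\overline{Q}$ nor anticipates this exceptional $f$--$f$ case; dismissing the base-case $S$-pairs as ``low-degree enough to be manageable'' leaves precisely the one nontrivial reduction unproved.

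Moreover, your central device for the families $W^{p_{1}+1,v}$, $V^{k,w}$, $^{k,w}I^{l,k,q}$ --- that ``the $S$-pair of two members at level $v-1$ and $v-2$ is the member at level $v$, so it collapses by the very definition'' --- does not address the pairs this lemma is about. The recursions in Definitions \ref{wp}, \ref{vk}, \ref{ik} combine an element of level $v-1$ (resp.\ $v-2$) with $y$-monomial multipliers and, as used later in the paper, encode the $S$-pairs of a level-$w$ element against the $Y$-minors (or against a different family such as $U$); they say nothing about the $S$-pair of two \emph{distinct members of the same level} --- e.g.\ two $V^{k,w}$'s with the same $w$ but different column indices --- which is exactly the situation of this lemma and is handled in the paper by the uniform repeated-row argument above. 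Two smaller inaccuracies: the leading term of $g_{ij,lk}$ is $z_{ij}x_{lk}$ (degree two), not a degree-one $z$-monomial, and the within-$g$ reductions are discharged by the explicit Koszul syzygies $(x_{lk}-y_{lk})Q-(x_{p_{2}q_{2}}-y_{p_{2}q_{2}})P$ and $z_{ij}Q-z_{p_{1}q_{1}}P$ rather than by Lemmas \ref{g_ij} and \ref{Swithchg_ij}, though that case is easy either way.
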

\begin{proof}
We use notation in the Definition \ref{TopM}. Notice that $m_{12}$
and $m_{21}$ have the same row indices, $P_{a_{1},...,a_{q_{u}}}^{u}$
and $Q_{b_{1},...,b_{q_{v}}}^{v}$ have the same number of columns,
i.e. $q_{u}=q_{v}$ hence \[
\overline{P_{a_{1},..,a_{q_{u}},d_{1},...,d_{p_{21}}}^{u}}=\overline{Q_{b_{1},...,b_{q_{v}},c_{1},...,c_{p_{12}}}^{v}}.\]
 Also $ $$\mbox{in}(m_{12}P_{a_{1},...,a_{q_{u}}}^{u})=\mbox{in}(m_{21}Q_{b_{1},...,b_{q_{u}}}^{v})$
are indeed the leading terms of \\
$\overline{P_{a_{1},..,a_{q_{u}},d_{1},...,d_{p_{21}}}^{u}}$.
The first matrix of $\overline{P_{a_{1},..,a_{q_{u}},d_{1},...,d_{p_{21}}}^{u}}$
has determinant equal to zero, since it has repeated row. Hence we
have $m_{12}P_{a_{1},...,a_{q_{u}}}^{u}$ and $ $ $m_{21}P_{b_{1},...,b_{q_{u}}}^{v}$
having different signs in the sum. Except $P_{a_{1},...,a_{q_{u}}}^{u}=f_{a_{1},...,a_{q_{u}}}^{l,k}$
and $Q_{b_{1},...,b_{q_{v}}}^{v}=f_{a_{1},...,b_{q_{u}}}^{l,k}$with
$a_{i}=b_{i}$ for $i\neq k-l+1$ and $a_{k-l+1}\neq b_{k-l+1}$,
each summand of all possible cases will have either repeated row,
or all the rows, $y_{1},...,y_{s_{2}}$ or rows of lemma \ref{topx}.
Hence they give \[
\sum_{i=2}^{u}\overline{P_{i}}\in\mathcal{G}.\]
For the remaining case, $ $\[
\sum_{i=2}^{u}\overline{P_{i}}=f_{a_{1},...,a_{k-l+1},b_{k-l+1},a_{k-l+2},...,a_{q_{u}}}^{l,k+1}\]
 from the proof of Lemma \ref{finL}. Hence the following is true:
\[
\overline{P_{a_{1},..,a_{q_{u}},d_{1},...,d_{p_{21}}}^{u}}=\sum_{i=2}^{u}\overline{P_{i}}\in\mathcal{G}.\]
After moving everything other than $m_{12}P_{a_{1},...,a_{q_{u}}}^{u}$
and $m_{21}Q_{b_{1},...,b_{q_{v}}}^{v}$ from the left hand side to
the right hand side, we obtain the $S$-pair and it becomes: \[
m_{12}P_{a_{1},...,a_{q_{u}}}^{u}-m_{21}Q_{b_{1},...,b_{q_{v}}}^{v}=\sum r_{i}f_{i}\]
with $\mbox{in}(r_{i}f_{i})<\mbox{in}(m_{12}P_{a_{1},...,a_{q_{u}}}^{u})$
and $f_{i}\in\mathcal{G}$.\end{proof}
\begin{lem}
$h_{P,Q}=0$ when $P\in\{|X_{a_{1},...,a_{s_{1}}}^{1,s_{1}}|\}$ in
$\mathcal{G}$.\end{lem}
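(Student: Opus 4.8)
The plan is to mimic the strategy of the preceding lemma, checking the hypotheses of Lemma~\ref{MandM} one generator family of $\mathcal{G}$ at a time. First I would record that, in the given lexicographic order, the leading monomial of $P := |X^{1,s_{1}}_{a_{1},\dots,a_{s_{1}}}|$ is the antidiagonal term $x_{1,a_{s_{1}}}x_{2,a_{s_{1}-1}}\cdots x_{s_{1},a_{1}}$, supported on rows $1,\dots,s_{1}$ and columns $a_{1}<\dots<a_{s_{1}}$ and involving only $X$-variables. Hence $\mathrm{in}(P)$ is relatively prime to $\mathrm{in}(|Y^{1,s_{2}}_{b_{1},\dots,b_{s_{2}}}|)$, and in general the only $Q\in\mathcal{G}$ producing a nonzero $S$-pair remainder candidate are the $g$'s, the $f^{l,k}_{a_{1},\dots,a_{s_{1}+k-1}}$'s, and the members of the $U$-, $W$-, $W^{p_{1}+1,v}$-, $V$-, $V^{k,w}$-, $H^{l,k,q}$-, $I^{l,k,q}$- and ${}^{k,w}I^{l,k,q}$-families whose leading monomial is divisible by one of the antidiagonal variables $x_{i,a_{s_{1}+1-i}}$; for all other pairs the leading monomials are coprime and $h_{P,Q}=0$ automatically.

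The key simplification is that $P$ is a \emph{single} determinant, so in the notation of Definition~\ref{TopM} we have $p_{u}=1$: the tail $\sum_{i\ge 2}\overline{P_{i}^{u}}$ on the $P$-side vanishes and $\overline{P^{u}}=\overline{P_{1}^{u}}$. Thus only two things need to be verified to invoke Lemma~\ref{MandM}: (i) the leading-term coincidences $\mathrm{in}(m_{21}P)=\mathrm{in}(|M_{21}|P)=\mathrm{in}(\overline{P})$, and (ii) that the $Q$-side tail $\sum_{i\ge2}\overline{Q_{i}^{v}}$ is a $\mathcal{G}$-combination with leading monomials strictly below $\mathrm{in}(\overline{Q_{1}^{v}})$. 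For (i): if the row adjoined by $M_{21}$ repeats one of the rows $1,\dots,s_{1}$, the bordered determinant is zero and (i) is trivial; otherwise, since the overlap of $\mathrm{in}(Q)$ with $\mathrm{in}(P)$ is entirely in the $X$-variables and the columns carried by $M_{21}$ are tracked by Definition~\ref{M_ij}, the bordered matrix is lexicographically in staircase position relative to $\mathrm{in}(P)$ and (i) follows from the term order. A few $g$-pairs fall outside this pattern, because a $Z$-index of $g$ may sit lexicographically among the $a_{j}$'s; for those I would not use Lemma~\ref{MandM} but instead expand the $S$-pair $z_{p_{1}q_{1}}P-(\mathrm{in}(P)/x_{p_{2}q_{2}})\,g_{p_{1}q_{1},p_{2}q_{2}}$ by hand and reduce it using the Koszul identities of Lemmas~\ref{g_ij} and \ref{Swithchg_ij}.

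It then remains to recognize the $Q$-side tail in each family. For a Koszul relation this is exactly Lemmas~\ref{g_ij} and \ref{Swithchg_ij}. For $Q=f^{l,k}_{a_{1},\dots,a_{s_{1}+k-1}}$ the tail is rewritten by Lemma~\ref{xTox_y} as an $f^{l,k+1}$ together with $g$-multiples of smaller $X$-minors --- precisely the computation in the proof of Lemma~\ref{finL}. For $Q$ in the $U$-, $W$-, $W^{p_{1}+1,v}$-, $V$-, $V^{k,w}$-, $H^{l,k,q}$-, $I^{l,k,q}$- or ${}^{k,w}I^{l,k,q}$-family, I would use Lemmas~\ref{xtox-y} and \ref{xTox_y} to trade the $X$-rows that bordering $P$ produces for $Y$-rows modulo $g$-corrections (invoking Lemma~\ref{topx} whenever a repeated row $y_{r}$ appears alongside the block of that lemma); the bordered determinant then telescopes onto a member of the same named family with one extra column, plus lower-order Koszul corrections, which is exactly the recursion built into Definitions~\ref{u}, \ref{w}, \ref{wp}, \ref{v}, \ref{vk}, \ref{h}, \ref{i} and \ref{ik}.

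The hard part will be purely bookkeeping: tracking signs and the exact row/column index sets that appear after one borders $P$ by $M_{21}$ and iterates Lemmas~\ref{xTox_y} and \ref{topx}, so that the resulting sum is identified \emph{on the nose} with $\pm$ a lower-order element of $\mathcal{G}$ --- this is where the intricate indexing of the $W^{p_{1}+1,v}$, $V^{k,w}$ and ${}^{k,w}I^{l,k,q}$ families earns its keep. I expect the thorniest single case to be $Q$ of type $I^{l,k,q}$ or ${}^{k,w}I^{l,k,q}$, where a $Z$-block, an $X$-block and several $Y$-rows interact and the $g$-corrections must be stripped off in the right order before the determinant collapses.
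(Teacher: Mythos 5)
Your plan is essentially the paper's own argument: dismiss the pairs with coprime leading terms, reduce the rest to Lemma \ref{MandM}, and control the bordered tails by repeated-row vanishing together with Lemmas \ref{g_ij}, \ref{Swithchg_ij}, \ref{topx} and \ref{xTox_y}, which is exactly how the paper treats $P=|X^{1,s_{1}}_{a_{1},\dots,a_{s_{1}}}|$ (its tails are zero because of repeated $x$-rows or land in $(\{|Y^{1,s_{2}}_{b_{1},\dots,b_{s_{2}}}|\})+(\{g_{ij,lk}\})$ before Lemma \ref{MandM} is invoked). So the proposal is correct in approach and at the same level of detail as the paper's sketch, the only difference being bookkeeping conventions such as your observation that $p_{u}=1$ on the $X$-minor side.
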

\begin{proof}
As the notation in Definition \ref{TopM}, we look at $\sum_{i=2}^{u}\overline{P_{i}}$.
For most of cases, $\sum_{i=2}^{u}\overline{P_{i}}=0$, since each
summand has repeated rows $x_{j}$ for some $j=1,...,s_{1}$. In some
other cases, we have either rows, $y_{i},\: x_{i}$ and $z_{i}$ in
each matrix then Lemma \ref{g_ij} can be applied. Or the part of
the sum has sum as Lemma \ref{topx} then deduce that it is in $(\{|Y_{b_{1},...,b_{s_{2}}}^{1,s_{2}}|\})+(\{g_{ij,lk}\})$.
Similarly, $\sum_{i=2}^{u}\overline{Q_{i}}\in(\{|Y_{b_{1},...,b_{s_{2}}}^{1,s_{2}}|\})+(\{g_{ij,lk}\})$,
hence Lemma \ref{MandM} applies.\end{proof}
\begin{lem}
$h_{P,Q}=0$ when $P\in\{|Y_{b_{1},...,b_{s_{2}}}^{1,s_{2}}|\}$ in
$\mathcal{G}$.\end{lem}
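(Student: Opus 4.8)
The plan is to treat the case $P\in\{|Y_{b_{1},\dots,b_{s_{2}}}^{1,s_{2}}|\}$ in complete parallel with the two preceding lemmas, relying on the machinery of Lemma \ref{MandM} and the notation of Definition \ref{TopM}. The main point is that $P=|Y_{b_{1},\dots,b_{s_{2}}}^{1,s_{2}}|$ involves only $y$-variables, which are the smallest variables in the term order, so the leading term of $P$ is a monomial in the $y_{ij}$'s; in any $S$-pair with another generator $Q$ of $\mathcal{G}$, the multipliers $m_{12}$ and $m_{21}$ are monomials built from the corresponding matrices $M_{12}$, $M_{21}$ as in Definition \ref{M_ij}. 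First I would split into cases according to which group $Q$ lies in, and in each case form the overlined polynomials $\overline{P_{a_{1},\dots}}$, $\overline{Q_{b_{1},\dots}}$ by border-adding the rows and columns of $M_{21}$, $M_{12}$, exactly as in the proof of the previous two lemmas.

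The key observation, to be verified case by case, is that $\sum_{i=2}^{p_{u}}\overline{P_{i}}$ — here $p_{u}=1$ since $|Y_{b_{1},\dots,b_{s_{2}}}^{1,s_{2}}|$ is a single determinant — is simply $0$ (or lies in $\mathcal{G}$), because after border-adding the extra rows coming from $M_{21}$, every summand of $\overline{P}$ beyond the leading one acquires a repeated row among the $y_{i}$'s, or else assembles into a sum of the shape handled by Lemma \ref{topx} (hence lies in $I_{s_{2}}(Y_{s_{2}t_{2}})+(g_{ij,lk})\subset\mathcal{L}$, with leading term dominated). Symmetrically, $\sum_{i=2}^{p_{v}}\overline{Q_{i}}$ is handled by whichever structural fact governs $Q$'s group: Lemma \ref{g_ij} or Lemma \ref{topx} when $Q$ is a determinantal generator, the proof of Lemma \ref{finL} when $Q$ is an $f^{l,k}$, and the remarks following Definitions \ref{u}, \ref{w}, \ref{wp}, \ref{v}, \ref{vk}, \ref{h}, \ref{i}, \ref{ik} which each exhibit the lower-order part of those generators as combinations of earlier elements of $\mathcal{G}$. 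Once both tails are written as combinations of elements of $\mathcal{G}$ with leading terms strictly below $\mathrm{in}(\overline{P_{1}})=\mathrm{in}(m_{21}P)$, Lemma \ref{MandM} applies verbatim and gives $h_{P,Q}=0$.

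I would organize the write-up by listing the groups of $\mathcal{G}$ that can have a leading term sharing a variable with $\mathrm{in}(P)$: since $\mathrm{in}(P)$ is a $y$-monomial, the relevant partners are $|Y_{b_{1},\dots,b_{s_{2}}}^{1,s_{2}}|$ itself (already covered by the ``same group'' lemma), the generators $g_{p_{1}q_{1},p_{2}q_{2}}$ whose leading term is $z_{p_{1}q_{1}}(\text{something})$ — here the overlap is through the $x-y$ entries, and the Koszul-type reductions of Lemmas \ref{g_ij}, \ref{Swithchg_ij} dispose of the tails — and the various $U$, $W$, $V$, $H$, $I$ families, each of whose leading terms (recorded in the Remarks) is a product of a $z$-factor, possibly an $x$-factor, and a block of $|Y|$-minors, so the $y$-block is exactly where the overlap with $P$ occurs. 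In each such case the border-addition of the $M_{21}$ rows (rows of a $Y$-submatrix) onto the single row-block of $P$, or of the $M_{12}$ rows onto $Q$, produces the cancellations described above.

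The main obstacle I anticipate is the bookkeeping of signs and of which rows get repeated after border-adding, particularly for the $W^{p_1+1,v}$ and $^{k,w}I^{l,k,q}$ families where the leading terms involve long alternating strings $y_{v-1,b'_{v-1}}y_{v,b_{v}}y_{v,b'_{v}}\cdots$ of $y$-variables: one must check that the extra $|Y|$-minors appearing in $\overline{P}$ really do pair off against sub-sums of $\overline{Q}$ in such a way that no term of order $\ge\mathrm{in}(\overline{P_{1}})$ survives on the right-hand side. This is exactly the phenomenon already exploited in the Remarks following Definitions \ref{wp} and \ref{ik} (``$\ldots$ are cancelled''), so the argument is not new, but it is the place where the verification is most delicate. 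Everything else is a direct citation of Lemma \ref{MandM} together with Lemmas \ref{g_ij}, \ref{topx}, \ref{xTox_y}, \ref{finL} and the structural remarks, so once those cancellations are spelled out the conclusion $h_{P,Q}=0$ follows.
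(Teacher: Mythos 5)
Your plan treats this case as a routine parallel of the $X$-minor case: border-add, observe that the tail of $P=|Y_{b_{1},...,b_{s_{2}}}^{1,s_{2}}|$ is trivial, claim the tail of $Q$ reduces via Lemma \ref{topx}, \ref{g_ij}, \ref{finL} and the remarks, and then apply Lemma \ref{MandM} ``verbatim.'' That misses the actual content of this lemma. The $S$-pairs of the $Y$-minors with $f^{l,k}$, $U$, $W$, $V$, $H$, $I$ are precisely the ones that do \emph{not} reduce to zero against the previously available generators: they are the source of the new families. The paper's proof consists of the identifications $S(f^{l,k},|Y|)\rightsquigarrow V$, $S(V,|Y|)\rightsquigarrow V^{k,w}$, $S(V^{k,w},|Y|)\rightsquigarrow V^{k,w+1}$, $S(U,|Y|)\rightsquigarrow W$, $S(W,|Y|)\rightsquigarrow W^{p_{1}+1,v}$, $S(H^{l,k,q},|Y|)\rightsquigarrow I^{l,k,q}$, $S(I^{l,k,q},|Y|)\rightsquigarrow{}^{k,w}I^{l,k,q}$ -- i.e.\ each such $S$-pair, after reduction by the obvious smaller elements, equals the next member of a chain, and the remainder vanishes only because those members were put into $\mathcal{G}$ (Definitions \ref{w}, \ref{wp}, \ref{v}, \ref{vk}, \ref{i}, \ref{ik} are literally defined as these $S$-pair reductions). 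Your proposal never makes these identifications; instead it asserts that both tails are combinations of ``earlier'' elements with smaller leading terms, which is exactly what fails here -- if it were true, the families $V$, $V^{k,w}$, $W$, $W^{p_{1}+1,v}$, $I$, $^{k,w}I$ would be superfluous. The remarks you cite do not exhibit the lower-order parts of these border-added polynomials as combinations of earlier generators; they only record membership in $\mathcal{L}$ and the leading term after the stated cancellations. So the hard step you defer (``to be verified case by case'') is the whole proof.

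Two smaller points. The leading term of $g_{p_{1}q_{1},p_{2}q_{2}}$ is $z_{p_{1}q_{1}}x_{p_{2}q_{2}}$, which shares no variable with the $y$-monomial $\mathrm{in}(|Y_{b_{1},...,b_{s_{2}}}^{1,s_{2}}|)$, so that pairing is skipped by the coprimality criterion; your remark that ``the overlap is through the $x-y$ entries'' conflates the polynomial with its leading monomial, and overlap of non-leading entries plays no role in Buchberger's test. Also, $X$-minors need not be discussed at all for this $P$, for the same coprimality reason, so the correct case list is exactly $Q\in\{f^{l,k},U,W,W^{p_{1}+1,v},V,V^{k,w},H^{l,k,q},I^{l,k,q},{}^{k,w}I^{l,k,q}\}$ together with the same-group case already handled.
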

\begin{proof}
The computation of $S$-pair between $f_{a_{1},...,a_{s_{1}+k-1}}^{l,k}$
and $|Y_{b_{1},...,b_{s_{2}}}^{1,s_{2}}|$ gives us $V_{p_{l},...,p_{k},a_{1},...,a_{k-1},b_{1},...,b_{s_{1}}}$
as in Definition \ref{v}. The $S$-pair between \\
$V_{p_{l},...,p_{k},a_{1},...,a_{k-1},b_{1},...,b_{s_{1}}}$ and
$|Y_{b_{1},...,b_{s_{2}}}^{1,s_{1}}|$ gives us $V_{p_{l},...,p_{k},a_{1},...,a_{k-1},b_{1},...,b_{s_{1}},b_{k}^{'},b_{k+1}^{'},...,b_{w}^{'}}^{k,w}$
as Definition \ref{vk}. The $S$-pair between $V_{p_{l},...,p_{k},a_{1},...,a_{k-1},b_{1},...,b_{s_{1}},b_{k}^{'},b_{k+1}^{'},...,b_{w}^{'}}^{k,w}$
and $|Y_{b_{1},...b_{s_{2}}}^{1,s_{2}}|$ gives $V_{p_{l},...,p_{k},a_{1},...,a_{k-1},b_{1},...,b_{s_{1}},b_{k}^{'},b_{k+1}^{'},...,b_{w+1}^{'}}^{k,w+1}$.
Similarly the computation of $S$-pair between $ $$H_{a_{1},...,a_{s_{1}+k-1}}^{l,k,q}$
and $|Y_{b_{1},...,b_{s_{2}}}^{1,s_{2}}|$ gives $ $$I_{a_{s_{1}+k-1},...,a_{l+s_{2}-1},a_{l-1},...,a_{1},b_{1},...,b_{s_{2}}}^{l,k,q}$
as Definition \ref{i} and the $S$-pair between $ $$I_{a_{s_{1}+k-1},...,a_{l+s_{2}-1},a_{l-1},...,a_{1},b_{1},...,b_{s_{2}}}^{l,k,q}$
and $|Y_{b_{1},...,b_{s_{2}}}^{1,s_{2}}|$ gives $ $$^{k,w}I_{q_{l},...,q_{k},b_{s_{1}},...,b_{s_{2}},...,b_{1},a_{l-1},...,a_{1},b_{k}^{'},...,b_{w}^{'}}^{l,k,q}$.
Also the $S$-pair between $ $$U_{p_{1},q_{1},a_{1},...,a_{s_{1}}}$
and $|Y_{b_{1},...,b_{s_{2}}}^{1,s_{2}}|$ gives $ $$W_{p,q,a_{1},...,a_{p},a_{s_{2}+1},...,a_{s_{1}},b_{1},...,b_{s_{2}}}$
as Definition \ref{w} and the $S$-pair between $ $$W_{p,q,a_{1},...,a_{p},a_{s_{2}+1},...,a_{s_{1}},b_{1},...,b_{s_{2}}}$
and $|Y_{b_{1},...,b_{s_{2}}}^{1,s_{2}}|$ gives \\
$W_{p_{1},q_{1},a_{1},...,a_{p1},a_{s_{2}+1},...,a_{s_{1}},b_{1},...,b_{p_{1}+1},b_{p_{1}+2},b_{p_{1}+3}.,b_{s_{2}},b_{p_{1}+1}^{'},b_{p_{1}+2}^{'},...,b_{v}^{'}}^{p_{1}+1,v}$.\end{proof}
\begin{lem}
$h_{P,Q}=0$ when $P\in\{g_{ij,lk}\}$ in $\mathcal{G}$.\end{lem}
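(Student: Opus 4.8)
The plan is to verify Buchberger's criterion for every pair $(P,Q)$ with $P=g_{ij,lk}$ and $Q\in\mathcal{G}$ whose leading term is not relatively prime to that of $P$. First I would fix the convention that the two index pairs of $g_{ij,lk}$ are ordered so that $z_{ij}>z_{lk}$; since $x>y$ in our term order, this gives $\mathrm{in}(g_{ij,lk})=z_{ij}x_{lk}$, a squarefree degree-two monomial containing exactly one $z$-variable and one $x$-variable. Against $Q=|Y^{1,s_{2}}_{b_{1},\dots,b_{s_{2}}}|$ the leading terms involve only $y$-variables on one side and only $z$- and $x$-variables on the other, so they are relatively prime and nothing need be checked; the case $Q=g_{p_{1}q_{1},p_{2}q_{2}}$ was settled in the first lemma of this sequence. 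Hence it remains to treat $Q$ lying in one of the families $\{|X^{1,s_{1}}_{a_{1},\dots,a_{s_{1}}}|\}$, $\{f^{l,k}_{a_{1},\dots,a_{s_{1}+k-1}}\}$, and the derived families $U$, $W$, $W^{p_{1}+1,v}$, $V$, $V^{k,w}$, $H$, $I$, $^{k,w}I$, and within each of these only those sub-cases in which $x_{lk}$ or $z_{ij}$ actually occurs in $\mathrm{in}(Q)$. The strict inequalities imposed on the column indices in Definitions \ref{w}, \ref{wp}, \ref{v}, \ref{vk}, \ref{i}, \ref{ik} already exclude most index configurations, leaving finitely many sub-cases.

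For the surviving pairs I would run the uniform mechanism of Lemma \ref{MandM}. Writing $P=g_{ij,lk}$ and $Q$ in the ``sum of determinants with a common column index'' normal form of Definition \ref{TopM}, I form the monomials and matrices $m_{12},m_{21},M_{12},M_{21}$ of Definition \ref{M_ij} and check the two hypotheses of Lemma \ref{MandM}: that multiplying $P$ by $|M_{21}|$ and $Q$ by $|M_{12}|$ does not disturb their leading terms, and that the tails $\sum_{i\ge 2}\overline{P_{i}}$ and $\sum_{i\ge 2}\overline{Q_{i}}$ can be rewritten as combinations of elements of $\mathcal{G}$ with strictly smaller leading term. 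The first is immediate from the shapes of the leading terms read off from Definitions \ref{u}--\ref{ik} and their accompanying Remarks. The second is the substance of the argument, and it is supplied by the straightening identities already proved: Lemma \ref{xTox_y} trades an $X$-row lying below a $Z$-row for $Y$-rows together with a sum of $g$'s, Lemma \ref{topx} disposes of the ``$X^{r,r}$ stacked on a $Y$/$Z$-block'' sums into $I_{s_{2}}(Y)$ plus $g$'s, and Lemmas \ref{g_ij} and \ref{Swithchg_ij} handle the $3\times 3$ and row-swapped $2\times 2$ determinants that arise. The key observation is that $\mathcal{G}$ was reverse-engineered from exactly these $S$-pairs: the $S$-pair of $g_{ij,lk}$ with $|X^{1,s_{1}}_{a_{1},\dots,a_{s_{1}}}|$ is, up to a monomial factor and lower-order terms, the expression $\alpha$ appearing in the proof of Lemma \ref{U}, hence reduces into the family $\{U\}$; the $S$-pair of $g_{ij,lk}$ with $f^{l,k}_{a_{1},\dots,a_{s_{1}+k-1}}$ is the defining expression of Definition \ref{h}, hence reduces into $\{H\}$; and the $S$-pairs of $g_{ij,lk}$ against $U$, $W$, $H$, $I$ reduce into the families obtained from those by prepending a $z$-row, namely $W$, $W^{p_{1}+1,v}$, $I$, $^{k,w}I$ respectively.

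For the families $V$, $V^{k,w}$, $W^{p_{1}+1,v}$, $^{k,w}I$ themselves I would use that their leading terms, as recorded in the corresponding Remarks, are products of a single $z$-variable, a block of $x$-variables, and a long string of $y$-variables; a common factor with $z_{ij}x_{lk}$ then forces either $z_{ij}$ to be the distinguished $z$-variable of $\mathrm{in}(Q)$ or $x_{lk}$ to lie in the $x$-block, and in either case the complementary minor $|M_{21}|$ coming from $g$ reintroduces, after one Laplace expansion, precisely the two-row $z$-over-$(x-y)$ configuration governed by Lemma \ref{Swithchg_ij}, so the tail collapses into $\mathcal{G}$ as needed.

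The main obstacle is bookkeeping rather than anything conceptual. One must carry the signs correctly through the iterated Laplace expansions and through the cancellation of the determinants with a repeated row, and in each branch verify that the $\mathcal{G}$-elements produced really do carry the leading terms asserted in the Remarks, so that the leading term of the original $S$-pair is strictly dominated. The delicate point is the compatibility between the combinatorics of the index pattern of $\mathrm{in}(Q)$ and the row-block hypotheses of Lemmas \ref{xTox_y} and \ref{topx}: one must check that whenever $z_{ij}$ divides $\mathrm{in}(Q)$ for a $Q$ assembled from $Z$-rows, multiplication of $g_{ij,lk}$ by the complementary minor yields exactly a matrix to which one of those two lemmas applies. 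Once this case-by-case compatibility is established, Lemma \ref{MandM} closes every remaining pair and Buchberger's criterion yields $h_{P,Q}=0$.
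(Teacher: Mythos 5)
Your outline is the paper's: restrict attention to $Q$ whose leading term meets $\mathrm{in}(g_{ij,lk})=z_{ij}x_{lk}$, run the mechanism of Lemma \ref{MandM} together with the straightening Lemmas \ref{g_ij}, \ref{Swithchg_ij}, \ref{xTox_y}, \ref{topx}, and recognize that the pairing with $|X_{a_{1},\dots,a_{s_{1}}}^{1,s_{1}}|$ reproduces the computation of Lemma \ref{U} and lands in $\{U\}$. But two of your specific claims fail, and they are exactly where the case analysis carries the content of the lemma. The $S$-pair of $g_{ij,lk}$ with $f^{l',k'}$ is the defining expression of Definition \ref{h} only in the single configuration $i=l'-1$. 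The paper separates the remaining overlaps: when $\mathrm{GCD}(\mathrm{in}\,P,\mathrm{in}\,Q)=z_{ij}$ with $i\in\{l',\dots,k'\}$ the $S$-pair reduces via $f^{l'+1,k'}$ (through the $p$-polynomials of Lemma \ref{finL}), and when the common factor is the $x$-variable, with $i<l'-1$, every matrix in the resulting expression acquires a repeated row $y_{l'}$ and the tail vanishes outright. Your blanket ``reduces into $\{H\}$'' omits these branches, so the Buchberger check is not exhaustive as written. (Also, you defer the $g$-versus-$g$ case to the same-group lemma, whereas the proof here disposes of it directly by the two Koszul identities $(x_{lk}-y_{lk})Q-(x_{p_{2}q_{2}}-y_{p_{2}q_{2}})P=(x_{ij}-y_{ij})g_{lk,p_{2}q_{2}}$ and $z_{ij}Q-z_{p_{1}q_{1}}P=z_{p_{2}q_{2}}g_{p_{1}q_{1},ij}$; the same-group lemma is phrased for sums of determinants with common column indices, so it does not obviously cover the $g$'s.)

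Second, your assertion that the $S$-pairs of $g$ against $U$, $W$, $H$, $I$ reduce into $W$, $W^{p_{1}+1,v}$, $I$, $^{k,w}I$ ``by prepending a $z$-row'' conflicts with how those families are constructed: $W$ and $I$ arise from pairing $U$, respectively $H$, with the maximal minors $|Y_{b_{1},\dots,b_{s_{2}}}^{1,s_{2}}|$ (Definitions \ref{w} and \ref{i}), and their leading terms carry a full complement of $y$-variables from rows $1,\dots,s_{2}$, which cannot occur in an $S$-pair with $g_{ij,lk}$, whose terms contribute no such $y$-block; so those elements are not even eligible reducers for these $S$-pairs. The paper instead handles the derived families by observing that the reductions repeat, essentially verbatim, the $g$-versus-$|X|$ and $g$-versus-$f^{l,k}$ computations, staying inside $\{g\}$, $\{U\}$, $\{f^{l,k}\}$ and the minors. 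The method you chose is the right one, but the reduction targets for the derived families and the missing subcases for $f^{l,k}$ must be corrected before the criterion is verified.
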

\begin{proof}
If $Q\in\{g_{ij,lk}\}$, we have $Q=z_{p_{1}q_{1}}(x_{p_{2}q_{2}}-y_{p_{2}q_{2}})-z_{p_{2}q_{2}}(x_{p_{1}q_{1}}-y_{p_{1}q_{1}})$
and $P=g_{ij,lk}=z_{ij}(x_{lk}-y_{lk})-z_{lk}(x_{ij}-y_{ij})$. It's
sufficient to consider either $(p_{1},q_{1})=(i,j)$ or $(p_{2},q_{2})=(l,k)$.
For the first case, \begin{eqnarray*}
(x_{lk}-y_{lk})Q-(x_{p_{2}q_{2}}-y_{p_{2}q_{2}})P & =(x_{ij}-y_{ij}) & g_{lk,p_{2}q_{2}}.\end{eqnarray*}
For the second case, \[
z_{ij}Q-z_{p_{1}q_{1}}P=z_{p_{2}q_{2}}g_{p_{1}q_{1},ij}.\]
Notice that $P=g_{ij,lk}=z_{i,j}(x_{l,k}-y_{l,k})-z_{l,k}(x_{i,j}-y_{i,j})$
with $(i,j)>(l,k)$. If $Q\in\{|X_{a_{1},...,a_{s_{1}}}^{1,s_{1}}|\}$,
the computing of $S$-pair of $P$ and $Q$ is similar to Lemma \ref{U}.
And it gives $U_{pqa_{1},...,a_{s_{1}}}$ as Definition $ $\ref{u}.
If $Q\in\{f_{a_{1},...,a_{s_{1+k-1}}}^{l,k}\}$, the computing of
$S$-pair of $P$ and $Q$ will give us $H_{a_{1},...,a_{s_{1}+k-1}}^{l,k,q}$
as Definition \ref{h} when $P=g_{ij,lk}$ and $i=l-1$. Otherwise
$GCD(\mbox{in}(P),\:\mbox{in}(Q))=z_{i,j}$ with $i\in\{l,l+1,...,k\}$
or $GCD(\mbox{in}(P),\:\mbox{in}(Q))=x_{l,k}$ with $i<l-1$. For
$GCD(\mbox{in}(P),\:\mbox{in}(Q))=z_{i,j}$ with $i\in\{l,l+1,...,k\}$,
the computation of $S$-pair gives us $f^{l+1,k}$. For $GCD(\mbox{in}(P),\:\mbox{in}(Q))=x_{l,k}$,
the computation of $S$-pair gives us repeated row, $y_{l}$, in every
matrix of $Q$ and this makes the determinant equal to zero. For all
other cases, $Q\in\mathcal{G}$, they come from the $S$-pair of $P\in\{g_{ij,lk}\}$
and $|X_{a_{1},...,a_{s_{1}}}^{1,s_{1}}|$ or $f_{a_{1},...,a_{s_{1}+k-1}}^{l,k}$.
Hence the computations of $S$-pair are very similar.\end{proof}
\begin{lem}
$h_{P,Q}=0$ when $P=f_{a_{1},...,a_{s_{1}+k_{1}-1}}^{l_{1},k_{1}}$
and $Q=f_{b_{1},...,b_{s_{1}+k_{2}-1}}^{l_{2},k_{2}}$ and $l_{1}\neq l_{2}$
or $k_{1}\neq k_{2}$ in $\mathcal{G}$.\end{lem}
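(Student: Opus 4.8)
The plan is to apply Lemma~\ref{MandM} to $P=f_{a_{1},\ldots,a_{s_{1}+k_{1}-1}}^{l_{1},k_{1}}$ and $Q=f_{b_{1},\ldots,b_{s_{1}+k_{2}-1}}^{l_{2},k_{2}}$, each viewed as in Definition~\ref{TopM} as a sum of determinants of matrices with a fixed column set, with distinguished summands $P_{1}$, $Q_{1}$ carrying the leading terms. We may assume the leading monomials $\mathrm{in}(P)$ and $\mathrm{in}(Q)$ are not relatively prime (otherwise $h_{P,Q}=0$ automatically), and, after interchanging $P$ and $Q$ if necessary, we may assume $(l_{1},k_{1})$ precedes $(l_{2},k_{2})$ in the lexicographic order on pairs, i.e.\ either $l_{1}<l_{2}$, or $l_{1}=l_{2}$ and $k_{1}<k_{2}$; the remaining case $l_{1}=l_{2}$, $k_{1}=k_{2}$ is excluded here, as it belongs to the ``same group'' lemma already disposed of.

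Next I would read off $\mathrm{in}(P)$ and $\mathrm{in}(Q)$ from the shape of the matrices in Definition~\ref{f} --- the leading monomial of $f^{l,k}$ being the product of the $z$-entries along the diagonal of the $Z$-block of $P_{1}$ with the diagonal contributions of its $X^{1,l-1}$- and $Y$-rows --- and then form $m_{12},m_{21},M_{12},M_{21}$ as in Definition~\ref{M_ij}. The first point to verify is the hypothesis of Lemma~\ref{MandM}:
\[\mathrm{in}(m_{21}P)=\mathrm{in}(|M_{21}|\,P)=\mathrm{in}(\overline{P})=\mathrm{in}(\overline{P_{1}}),\]
and symmetrically for $Q$. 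This holds because the rows of $M_{21}$ adjoined to each matrix of $P$ are $X$- or $Z$-rows whose row indices are precisely those already prescribed by the pattern of $f^{l_{1},k_{1}}$, so that adjoining them merely extends the transversal realizing the leading term, while the adjoined columns slot into the gaps allowed by $a_{1}<\cdots<a_{s_{1}+k_{1}-1}$; in particular no reshuffling of the leading term takes place.

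It then remains, as required by Lemma~\ref{MandM}, to show that the tails $\sum_{i\geq 2}\overline{P_{i}}$ and $\sum_{i\geq 2}\overline{Q_{i}}$ are combinations of elements of $\mathcal{G}$ with leading term strictly below $\mathrm{in}(\overline{P_{1}})$. For this I would reuse the computation in the proof of Lemma~\ref{finL} together with Lemmas~\ref{topx}, \ref{g_ij} and~\ref{xTox_y}: each matrix occurring in these tails either acquires a repeated row --- because an adjoined $X$-row coincides with an existing row of its block, or because the $Y$-rows fill up to all of $y_{1},\ldots,y_{s_{1}}$ --- and so contributes $0$; or its mismatched $X$- and $Y$-rows convert, via Lemmas~\ref{g_ij} and~\ref{xTox_y}, into a sum of terms $g_{rc_{1},uc_{2}}-g_{rc_{2},uc_{1}}$ times a smaller determinant, hence into $(\mathcal{G})$; or, as in the reassembly at the end of the proof of Lemma~\ref{finL}, the surviving terms collect into elements of the form $f^{l,k'}$, again in $\mathcal{G}$. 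The residual combinations are placed inside $I_{s_{2}}(Y_{s_{2}t_{2}})+(\{g_{ij,lk}\})\subset(\mathcal{G})$ by Lemma~\ref{topx}. Since each such contribution has leading term below $\mathrm{in}(\overline{P_{1}})$, Lemma~\ref{MandM} yields $h_{P,Q}=0$.

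I expect the main obstacle to be the case $l_{1}\neq l_{2}$: there the $Z$-blocks $Z^{l_{1},\cdot}$ and $Z^{l_{2},\cdot}$ of $P$ and $Q$ have different heights, so $\mathrm{in}(P)$ and $\mathrm{in}(Q)$ share only a partial $z$-product and the complementary monomials $m_{12},m_{21}$ are genuinely mixed in $z$- and $x$-variables; keeping track of the signs in the Laplace expansions of $\overline{P}$ and $\overline{Q}$ and of exactly which adjoined rows become repeated is the delicate bookkeeping. The case $l_{1}=l_{2}$, $k_{1}\neq k_{2}$ is milder, being close to the ``same group'' computation with only the height of the $Z$-block varying.
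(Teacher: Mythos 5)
Your framework is the paper's --- write $P$ and $Q$ as in Definition \ref{TopM}, adjoin $M_{21}$ and $M_{12}$ as in Definition \ref{M_ij}, and invoke Lemma \ref{MandM} --- but the substance of the verification is missing, and the one concrete structural claim you offer in support of the hypotheses is false. The leading summand ($r=k$) of $f^{l,k}$ contains the rows $Y^{1,k-1}$ and $Y^{k+1,s_{1}}$, so $\mathrm{in}(f^{l,k})$ involves $y$-variables and the complementary monomials $m_{12},m_{21}$ are in general \emph{not} products of $x$- and $z$-entries only: in the case $k_{1}>k_{2}$ the block adjoined to the matrices of $Q$ contains the row $Y^{k_{2},k_{2}}$ (and the block adjoined to $P$ a row $Y^{k_{1},k_{1}}$). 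This is not a bookkeeping detail; it is the engine of the argument, since the adjoined $y_{k_{2}}$-row is exactly what forces every summand of $\overline{Q}$ with $r>k_{2}$ to have a repeated row and vanish, which is what makes the tail tractable at all. Your justification that the adjoined rows ``merely extend the transversal'' therefore misidentifies the rows being adjoined and skips the mechanism the proof actually runs on.

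Moreover, the surviving summand $r=k_{2}$ is not disposed of by any of the three generic mechanisms you list (repeated rows, conversion by Lemmas \ref{g_ij}/\ref{xTox_y}, collection into some $f^{l,k'}$, or Lemma \ref{topx}). One must first apply Lemma \ref{Swithchg_ij} to trade the row $X^{k_{2},k_{2}}-Y^{k_{2},k_{2}}$ against the $Z^{l_{2},l_{2}}$-row, which produces Koszul corrections $g_{l_{2}p_{1},k_{2}p_{2}}-g_{l_{2}p_{2},k_{2}p_{1}}$, and then, after deleting the repeated row $y_{l_{2}}$, recognize what remains as $\sum\pm|M^{12}_{p_{1},...,p_{v-1}}|\,p^{l_{2}+1,k_{2}+1}_{q_{1},...,q_{s_{1}+k_{2}}}$, where the $p^{l,k}$ are the auxiliary polynomials from the proof of Lemma \ref{finL}; none of this appears in your outline, and Lemma \ref{Swithchg_ij} is never invoked. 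Finally, your case division and difficulty assessment are inverted relative to what actually happens: the short case is $k_{1}=k_{2}$, $l_{1}\neq l_{2}$, where every matrix of $\overline{Q}$ contains the three rows $z_{l_{1}},x_{l_{1}},y_{l_{1}}$ and Lemma \ref{g_ij} places the entire tail in $(\{g_{l_{1}i,l_{1}j}\})$, while the heavy computation is the case $k_{1}\neq k_{2}$ (including $l_{1}=l_{2}$), which you set aside as ``milder.'' As written, the proposal is a plan for a proof rather than a proof of this lemma.
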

\begin{proof}
We prove this part in two cases: (a) $k_{1}\neq k_{2}$, (b) $l_{1}\neq l_{2}$.

In case (a), without lost of generality, let $k_{1}>k_{2}$. Then
the matrix appears in the first summand of $f_{a_{1},..,a_{s_{1}+k_{1}-1}}^{l_{1}k_{1}}$
has row $y_{k_{2}}$ without row $y_{k_{1}}$, and the matrix that
appears in the first summand of $f_{b_{1},...,b_{s_{1}+k_{2}-1}}^{l_{2}k_{2}}$
has row $y_{k_{1}}$without $y_{k_{2}}$. Consider $m_{12}$, $m_{21}$,
$M_{12}$ and $M_{21}$ as defined in Definition \ref{M_ij}. Assume
$M_{12}$ has columns $c_{1},...,c_{r}$ and $M_{21}$ has columns
$d_{1},....,d_{w}$. Define $\overline{f_{a_{1},...,a_{s_{1}+k_{1}-1},d_{1},...,d_{w}}^{l_{1},k_{1}}}$
and $\overline{f_{b_{1},...,b_{s_{1}+k_{2}-1,c_{1},...,c_{r}}}^{l_{2},k_{2}}}$
as in the Definition \ref{TopM}. Let $\{c_{1},...,c_{r},b_{1},...,b_{s_{1}+k_{2}-1}\}=\{d_{1},...,d_{w},a_{1},...,a_{s_{1}+k_{1}-1}\}=\mathcal{I}$,
from the way we define $M_{12}$ and $M_{21}$ we have the initial
term of $\overline{f_{\mathcal{I}}^{l_{1},k_{1}}}$ is $\mbox{in}(M_{12}f_{a_{1},...,a_{s_{1}+k_{1}-1}}^{l_{1},k_{1}})$
and similarly for $\overline{f_{\mathcal{I}}^{l_{2},k_{2}}}$. We
will like to apply Lemma \ref{MandM} to this case.

Rewrite $\overline{f_{\mathcal{I}}^{l_{2},k_{2}}}$ as $\alpha_{1}$:\begin{eqnarray*}
\alpha_{1}:= & {\displaystyle \sum_{r=k_{2}}^{s_{2}}(}-1)^{r+1}\left|\left[\begin{array}{c}
\begin{array}{c}
\overline{M_{12}}'\\
Y^{k_{2},k_{2}}\end{array}\\
Z^{l_{2},k_{2}-1}\\
Z^{r,r}\\
X^{1,l_{2}-1}\\
Y^{1,r-1}\\
Y^{r+1}\end{array}\right]_{\mathcal{I}}\right|+{\displaystyle \sum_{r=k_{2}}^{s_{2}}(-1)^{r+1}\sum_{u=r+1}^{s_{1}}}\left|\left[\begin{array}{c}
\begin{array}{c}
\overline{M_{12}}'\\
Y^{k_{2},k_{2}}\end{array}\\
Z^{l_{2},k_{2}-1}\\
X^{r,r}-Y^{r,r}\\
X^{1,l_{2}-1}\\
Y^{1,r-1}\\
Y^{r+1,u-1}\\
Z^{u,u}\\
X^{u+1,s_{1}}\end{array}\right]_{\mathcal{I}}\right|.\end{eqnarray*}
Notice that in the first sum of $\alpha_{1}$, when $r>k_{2}$, the
matrices have repeated row $y_{k_{2}}$. Hence the determinants are
zero. The first sum becomes $\alpha_{11}$: \[
\alpha_{11}:=\left|\left[\begin{array}{c}
\begin{array}{c}
\overline{M_{12}}'\\
Y^{k_{2},k_{2}}\end{array}\\
Z^{l_{2},k_{2}-1}\\
Z^{k_{2},k_{2}}\\
X^{1,l_{2}-1}\\
Y^{1,k_{2}-1}\\
Y^{k_{2}+1}\end{array}\right]_{\mathcal{I}}\right|\]
 We notice the leading term of $\alpha_{11}$ is $m_{12}(\mbox{in}f_{b_{1},...,b_{s_{1}+k-1}}^{l_{2}k_{2}})$.
Let the second sum of $\alpha_{1}$ be $\alpha_{12}$:\\
\[
\alpha_{12}:=\sum_{r=k_{2}}^{s_{2}}(-1)^{r+1}\sum_{u=r+1}^{s_{1}}\left|\left[\begin{array}{c}
\begin{array}{c}
\overline{M_{12}}'\\
Y^{k_{2},k_{2}}\end{array}\\
Z^{l_{2},k_{2}-1}\\
X^{r,r}-Y^{r,r}\\
X^{1,l_{2}-1}\\
Y^{1,r-1}\\
Y^{r+1,u-1}\\
Z^{u,u}\\
X^{u+1,s_{1}}\end{array}\right]_{\mathcal{I}}\right|.\]

Lemma \ref{MandM} provided if $\alpha_{12}$ is a combination of
elements of $\mathcal{G}$ such that the leading term of each summand
is smaller than $m_{12}f_{b_{1},...,b_{s_{1}+k_{2}-1}}^{l_{2}k_{2}}$.
Observe that in the sum of $\alpha_{12}$, when $r>k_{2}$, the matrices
have repeated row $y_{k_{2}}$. Hence their determinants are zero. 

We are only left with $r=k_{2}$, and $\alpha_{12}$ becomes 

\begin{eqnarray*}
\alpha_{13}:= & (-1{\displaystyle )^{k_{2}+1}\sum_{u=k_{2}+1}^{s_{1}}\left|\left[\begin{array}{c}
\begin{array}{c}
\begin{array}{c}
\overline{M_{12}}'\\
Y^{k_{2},k_{2}}\end{array}\end{array}\\
Z^{l_{2},k_{2}-1}\\
X^{k_{2},k_{2}}-Y^{k_{2},k_{2}}\\
X^{1,l_{2}-1}\\
Y^{1,k_{2}-1}\\
Y^{k_{2}+1,u-1}\\
Z^{u,u}\\
X^{u+1,s_{1}}\end{array}\right]_{\mathcal{I}}\right|}.\end{eqnarray*}
We apply Lemma \ref{Swithchg_ij} on $\alpha_{13}$, then $\alpha_{13}$
becomes $\alpha_{14}$:\begin{eqnarray*}
\alpha_{14}:= & (-1)^{k_{2}+1}{\displaystyle \sum_{u=k_{2}+1}^{s_{1}}\left|\left[\begin{array}{c}
\begin{array}{c}
\overline{M_{12}}'\\
Y^{k_{2},k_{2}}\end{array}\\
X^{l_{2},l_{2}}-Y^{l_{2},l_{2}}\\
Z^{l_{2}+1,k_{2}-1}\\
Z^{k_{2},k_{2}}\\
X^{1,l_{2}-1}\\
Y^{1,k_{2}-1}\\
Y^{k_{2}+1,u-1}\\
Z^{u,u}\\
X^{u+1,s_{1}}\end{array}\right]_{\mathcal{I}}\right|+(-1)^{k_{2}+1}\sum_{u=k_{2}+1}^{s_{1}}}\end{eqnarray*}
\begin{eqnarray*}
 &  & \sum_{\{p_{1},p_{2},q_{1},...,q_{s_{1}+k_{2}-1}\}=\mathcal{I}}\left(\pm(g_{l_{2}p_{1},k_{2},p_{2}}-g_{l_{2}p_{2},k_{2}p_{1}})\left|\left[\begin{array}{c}
\begin{array}{c}
\begin{array}{c}
\overline{M_{12}}'\\
Y^{k_{2},k_{2}}\end{array}\end{array}\\
Z^{l_{2}+1,k_{2}-1}\\
X^{1,l_{2}-1}\\
Y^{1,k_{2}-1}\\
Y^{k_{2}+1,u-1}\\
Z^{u,u}\\
X^{u+1,s_{1}}\end{array}\right]_{q_{1},...,q_{s_{1}+k-2}}\right|\right).\end{eqnarray*}
After removing the repeated row $y_{l_{2}}$ in the first sum in the
above expression for $\alpha_{14}$, let this sum be $\alpha_{15}$:\\
 \begin{eqnarray*}
\alpha_{15}: & =(-1)^{k_{2}+1}{\displaystyle \sum_{u=k_{2}+1}^{s_{1}}\left|\left[\begin{array}{c}
\begin{array}{c}
\begin{array}{c}
\overline{M_{12}}'\\
Y^{k_{2},k_{2}}\end{array}\end{array}\\
X^{l_{2},l_{2}}\\
Z^{l_{2}+1,k_{2}-1}\\
Z^{k_{2},k_{2}}\\
X^{1,l_{2}-1}\\
Y^{1,k_{2}-1}\\
Y^{k_{2}+1,u-1}\\
Z^{u,u}\\
X^{u+1,s_{1}}\end{array}\right]_{\mathcal{I}}\right|} & =\pm\sum_{u=k_{2}+1}^{s_{1}}(-1)^{u+1}\left|\left[\begin{array}{c}
\begin{array}{c}
\overline{M_{12}}'\end{array}\\
Z^{l_{2}+1,k_{2}}\\
Z^{u,u}\\
X^{1,l_{2}}\\
Y^{1,u-1}\\
X^{u+1,s_{1}}\end{array}\right]_{\mathcal{I}}\right|\end{eqnarray*}
Now $\alpha_{15}$ becomes\begin{eqnarray*}
\alpha_{16}:= & {\displaystyle \sum_{\{p_{1},...,p_{v-1},q_{1},...,q_{s_{1+k_{2}}}\}=\mathcal{I}}\pm|M_{p_{1},...,p_{v-1}}^{12}|}\left({\displaystyle \sum_{u=k_{2}+1}^{s_{1}}}(-1)^{u+1}\left|\left[\begin{array}{c}
Z^{l_{2}+1,k_{2}}\\
Z^{u,u}\\
X^{1,l_{2}}\\
Y^{1,u-1}\\
X^{u+1,s_{1}}\end{array}\right]_{\mathcal{I}}\right|\right)\\
= & {\displaystyle \sum_{\{p_{1},...,p_{v-1},q_{1},...,q_{s_{1+k_{2}}}\}=\mathcal{I}}}\pm|M_{p_{1},...,p_{v-1}}^{12}|p_{q_{1},...,q_{s_{1}+k_{2}}}^{l_{2}+1,k_{2}+1}.\end{eqnarray*}
Here $\{p_{q_{1},...,q_{s_{1}+k_{1}}}^{l_{2}+1,k_{2}+1}\}$ are as
defined in lemma \ref{f}, and the proof of lemma \ref{f} shows that
they are in $\mathcal{L}$ . This shows $\alpha_{12}$ is a combination
of elements of $\mathcal{G}$ such that the leading term of each summand
is smaller than $m_{12}f_{b_{1},...,b_{s_{1}+k_{2}-1}}^{l_{2}k_{2}}$. 

We can do the same to $\overline{f_{\mathcal{I}}^{l_{1},k_{1}}}$
and show the second part of the sum of $\overline{f_{\mathcal{I}}^{l_{1},k_{1}}}$
is a combination of elements of $\mathcal{G}$ such that the leading
term of each summand is smaller than in$(m_{21}f_{a_{1},...,a_{s_{1}+k_{1}-1}}^{l_{1}k_{1}})$. 

In case (b): assume $k_{1}=k_{2}$ and $l_{1}<l_{2}\leq k_{2}=k_{1}$.
The proof technique is very similar to case (a). Notice that the first
matrix appearing in the expression for $f_{a_{1},...,a_{s_{1}+k_{1}-1}}^{l_{1}k_{1}}$
has row $z_{l_{1}}$ without row $x_{l_{1}}$ and the first matrix
appearing in the expression for $f_{b_{1},...,b_{s_{1}+k_{1}-1}}^{l_{2}k_{2}}$
has row $x_{l_{1}}$ without row $z_{l_{1}}$. Since $l_{1}\leq l_{2}-1$
and $l_{1}\leq k_{2}-1$, each matrix of $\overline{}$$\overline{f_{b_{1},...,b_{s_{1}+k_{2}-1},c_{1},...,c_{r}}^{l_{2},k_{2}}}$
has the rows $x_{l_{1}}$ and $y_{l1}$. They also all have row $z_{l_{1}}$.
Applying lemma \ref{g_ij} gives all the determinants of those matrices
are in $(\{g_{l_{1}i,l_{1}j}\})$. \end{proof}
\begin{lem}
$h_{PQ}=0$ if $P$, $Q\in\{$$f_{a_{1},...,a_{s_{1}+k-1}}^{l,k}$,
$U_{p_{1},q_{1},a_{1},...,a_{s_{1}}}$ , \\
$W_{p,q,a_{1},...,a_{p},a_{s_{2}+1},...,a_{s_{1}},b_{1},...,b_{s_{2}}}$,
\\
$W_{p_{1},q_{1},a_{1},...,a_{p1},a_{s_{2}+1},...,a_{s_{1}},b_{1},...,b_{p_{1}+1},b_{p_{1}+2},b_{p_{1}+3}.,b_{s_{2}},b_{p_{1}+1}^{'},b_{p_{1}+2}^{'},...,b_{v}^{'}}^{p_{1}+1,v}$,
\\
$V_{p_{l},...,p_{k},a_{1},...,a_{k-1},b_{1},...,b_{s_{1}}}$, $V_{p_{l},...,p_{k},a_{1},...,a_{k-1},b_{1},...,b_{s_{1}},b_{k}^{'},b_{k+1}^{'},...,b_{w}^{'}}^{k,w}$,
$H_{a_{1},...,a_{s_{1}+k-1}}^{l,k,q}$ ,\\
 \textup{$I_{a_{s_{1}+k-1},...,a_{l+s_{2}-1},a_{l-1},...,a_{1},b_{1},...,b_{s_{2}}}^{l,k,q}$,}
$^{k,w}I_{q_{l},...,q_{k},b_{s_{1}},...,b_{s_{2}},...,b_{1},a_{l-1},...,a_{1},b_{k}^{'},...,b_{w}^{'}}^{l,k,q}\}$. \end{lem}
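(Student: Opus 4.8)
The proof proceeds by cases over the unordered pairs $\{P,Q\}$ taken from the displayed list, and by the remark following Theorem \ref{GB} we need only treat those for which $\mathrm{in}(P)$ and $\mathrm{in}(Q)$ are not relatively prime; for all other pairs the $S$-pair reduces to zero by Buchberger's Criterion. For each pair that remains, the plan is to apply Lemma \ref{MandM}. Concretely: from Definition \ref{M_ij} we read off $m_{12},m_{21},M_{12},M_{21}$ for the leading monomials of $P$ and $Q$, form the bordered polynomials $\overline{P}$ and $\overline{Q}$ of Definition \ref{TopM}, and check the two hypotheses of Lemma \ref{MandM}: (i) the monomials $\mathrm{in}(m_{21}P)$, $\mathrm{in}(|M_{21}|P)$, $\mathrm{in}(\overline{P})$ and $\mathrm{in}(\overline{P_1})$ all agree, and symmetrically for $Q$; and (ii) the tails $\sum_{i\ge 2}\overline{P_i}$ and $\sum_{i\ge 2}\overline{Q_i}$ lie in $\mathcal{L}$ and are written as combinations of elements of $\mathcal{G}$ whose leading terms are strictly smaller than $\mathrm{in}(\overline{P_1})$.

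Hypothesis (i) is bookkeeping with the lexicographic order of Theorem \ref{GB} ($z>x>y$, with the stated refinements on rows and columns): the leading term of each generator in the list is the product, over the rows of its ``first'' matrix, of the largest variable still available, and bordering by $M_{12}$ or $M_{21}$ merely multiplies leading terms. Thus (i) holds precisely when the bordered rows and columns do not collide with those already present, and these non-collision requirements are exactly the index inequalities imposed in Definitions \ref{u}, \ref{w}, \ref{wp}, \ref{v}, \ref{vk}, \ref{h}, \ref{i} and \ref{ik}; the resulting leading monomials are the ones already recorded in the remarks following those definitions, so (i) can simply be read off.

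Hypothesis (ii) carries the content. Each of $U$, $W$, $W^{p_1+1,v}$, $V$, $V^{k,w}$, $H$, $I$, ${}^{k,w}I$ was \emph{defined} so as to be, up to elements of $\mathcal{L}$ of smaller leading term, the $S$-pair of two earlier generators, and the $f^{l,k}$ together with Lemma \ref{finL} play the analogous role among the $f$'s. Hence, when a generator $P$ is bordered and the new determinants are expanded by Laplace along the added rows, every resulting summand is of one of four types: (a) it has a repeated row, so its determinant vanishes; (b) after rearranging rows it contains all of $y_1,\dots,y_{s_2}$ together with one row of $Z$, and Lemma \ref{topx} places it in $I_{s_2}(Y_{s_2 t_2})+(g_{ij,lk})\subset\mathcal{L}$, with smaller leading term; (c) it exhibits a $z$-entry, an $x$-entry and a $y$-entry in a $2\times 2$ or $3\times 3$ block that Lemmas \ref{g_ij} and \ref{Swithchg_ij} rewrite as $g$-multiples; or (d) it telescopes into the next member of the same family, namely $f^{l,k}\!\to\!f^{l,k+1}$ or $f^{l+1,k+1}$ (exactly as in the proof of Lemma \ref{finL} and of the preceding lemma), $V^{k,w}\!\to\!V^{k,w+1}$, $W^{p_1+1,v}\!\to\!W^{p_1+1,v+1}$, or ${}^{k,w}I\!\to\!{}^{k,w+1}I$, which is again an element of $\mathcal{G}$. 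The telescopes in (d) terminate because the superscripts are bounded ($w\le s_2-1$, $k\le s_2$, $l\le k$): one more bordering beyond the bound repeats a row of $Y$ and returns the summand to type (a). Thus the tails satisfy (ii) and Lemma \ref{MandM} yields $h_{P,Q}=0$ for every pair.

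The principal difficulty is organizational rather than conceptual: there are roughly a dozen families in the list, so a large table of type-pairs must be traversed, and in each entry one must pin down the signs and, above all, confirm that the type-(d) leftover is exactly the next generator already in $\mathcal{G}$ — in other words, that $\mathcal{G}$ is closed under these reductions. This closure is precisely why the nested families $W^{p_1+1,v}$, $V^{k,w}$ and ${}^{k,w}I$ were introduced in Definitions \ref{wp}, \ref{vk} and \ref{ik}; verifying that no further family is forced, and that every cross $S$-pair in the list falls back inside $\mathcal{G}$, is the heart of the argument and is carried out by the same four determinant manipulations — repeated-row vanishing, Lemma \ref{topx}, Lemmas \ref{g_ij}/\ref{Swithchg_ij}, and the Laplace telescoping behind Lemma \ref{finL} — applied uniformly across the cases.
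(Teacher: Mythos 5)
Your proposal follows essentially the same route as the paper: restrict to $S$-pairs with non-coprime leading terms, apply the bordering machinery of Definition \ref{TopM} and Lemma \ref{MandM}, and reduce the tails by the same four manipulations (repeated-row vanishing, Lemma \ref{topx}, Lemmas \ref{g_ij}/\ref{Swithchg_ij}, and the telescoping into the nested families $W^{p_{1}+1,v}$, $V^{k,w}$, ${}^{k,w}I^{l,k,q}$, which is exactly the closure the paper's proof asserts family by family before declaring the remaining pairs similar). The paper's own argument is no more detailed than yours, so the proposal is an accurate account of the intended proof.
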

\begin{proof}
For the purpose of the proof, we drop the column indices. The remainders
of $S$-pair of $f^{l,k}$ and $U$ are in the ideal generated by
$(\{V\},\{$$W$\},$\{g\}$,\{$Y$\}). Similarly, the remainders of
$S$-pair of $f^{l,k}$ and $ $$W$ are in $(\{V^{k,w}$\}, \{$W^{p_{1}+1,v}$\},$\{g\}$,\{$Y$\}),
and the remainders of $S$-pair of $f^{l,k}$ and $W^{p_{1}+1,v}$
are in (\{$V^{k,v+1}\}$, $\{W^{p_{1}+1,v+1}\},\{g\},\{Y\})$. The
remainders of $S$-pair of $f^{l,k}$ and $V$ are $V^{k,w}$ and
the remainders of $S$-pair of $f^{l,k}$ and $V^{k,w}$ are $V^{k,w+1}$.
The remainders of $S$-pair of $f^{l,k}$ and $H^{l,k,q}$ are in
$(\{I^{l,k,q}\},\{g\},\{Y\})$ and the remainders of $S$-pair of
$f^{l,k}$ and $I^{l,k,q}$ are in $(\{{}^{k,w}I^{l,k,q}\}$, $\{g\}$,
$\{Y\})$. Finally the remainders of $S$-pair of $f^{l,k}$ and${}^{k,w}I^{l,k,q}$
are in (\{$^{k,w+1}I^{l,k,q}\}$, $\{g\},\{Y\})$. All the other $S$-pair
of elements have similar relationship as above.
\end{proof}
We complete the proof of Theorem \ref{GB}. 

\medskip{}

\textit{Proof of Lemma \ref{NZD}: }Notice that $x_{11}$ is a non
zero-divisor on $k[X,Y,Z]/\mbox{in}(\mathcal{L})$. Since the only
possible elements of $\mathcal{G}$ that leading monomials are divisible
by $x_{11}$ are $ $$U_{1,1,1,a_{2},...,a_{s_{1}}}$, $W_{1,1,1,a_{s_{2}+1},...,a_{s_{1}},b_{1},...,b_{s_{2}}}$
and \\
$W_{1,1,1,a_{s_{2}+1},...,a_{s_{1}},b_{1},...,b_{s_{2}},b_{2}^{'},...,b_{v}^{'}}^{2,v}$.
But those monomials are divisible by $ $the leading monomials of
$f_{1,a_{2},...,a_{s_{1}}}^{1,1}$, $V_{1,b_{1},...,b_{s_{2}},a_{s_{2}+1},...,a_{s_{1}}}$
and $V_{1,b_{1},...,b_{s_{2}},a_{s_{2}+1},...,a_{s_{1}},b_{2}^{'},...,b_{v}^{'}}^{2,v}$.
Hence $x_{11}$ is also a non zero-divisor on $k[X,Y,Z]/\mathcal{L}$.\hfill{}$\square$

\medskip{}

The following example is computed in Singular \cite{GPS}. This example
gives us an idea what does the initial ideal looks like.
\begin{example}
Let $X$, $Y$, $Z$ be a $3\times4$ matrices, $X_{3,4}$, $Y_{2,4}$
are $3\times4$ and $2\times4$ submatrices of $X$ and $Y$ then
the defining ideal of the $\mathcal{R}(\mathbb{D})$ is generated
by $I_{3}(X_{3,4})$, $I_{2}(Y_{2,4})$, $g_{ij,lk}$ where $1\leq i,l\leq3$,
$1\leq l,k\leq4$ and \begin{eqnarray*}
f_{a_{1},...,a_{3}} & = & \left|\begin{array}{ccc}
z_{1a_{1}} & z_{1a_{2}} & z_{1a_{3}}\\
x_{2a_{1}} & x_{2a_{2}} & x_{2a_{3}}\\
x_{3a_{1}} & x_{3a_{2}} & x_{3a_{3}}\end{array}\right|+\left|\begin{array}{ccc}
y_{1a_{1}} & y_{1a_{2}} & y_{1a_{3}}\\
z_{2a_{1}} & z_{2a_{2}} & z_{2a_{3}}\\
x_{3a_{1}} & x_{3a_{2}} & x_{3a_{3}}\end{array}\right|,\end{eqnarray*}
 where $1\leq a_{1}<a_{2}<a_{3}\leq4$. The initial ideal of $\mathcal{L}$
via the term order defined in Theorem \ref{GB} is generated by $\{x_{1a_{3}}x_{2a_{2}}x_{3a_{1}}\}_{1\leq a_{1}<a_{2}<a_{3}\leq4}$,
$\{y_{1b_{2}}y_{2b_{1}}\}_{1\leq b_{1}<b_{2}\leq4}$, $\{z_{ij}x_{lk}\}_{i<l\mbox{ or }i=l\mbox{ and \ensuremath{j<k}}}$,
$\{z_{1a_{1}}y_{2a_{2}}y_{3a_{3}}\}_{1\leq a_{1}<a_{3}<a_{2}\leq4}$,
$z_{11}z_{22}y_{34}y_{33}$, $z_{21}x_{14}y_{13}y_{32}$, $z_{12}z_{21}x_{12}y_{14}y_{33}$,
$z_{13}z_{21}x_{13}y_{14}y_{32}$, $z_{31}x_{14}x_{23}y_{32}$, $\{z_{2j}x_{1a_{3}}x_{2a_{2}}y_{3a_{1}}\}_{1\leq a_{1}<a_{2}\leq j<a_{3}\leq4,}$
$_{\mbox{ or }1\leq a_{2}\leq j<a_{1}<a_{3}\leq4}$, $\{z_{2j}x_{1a_{3}}y_{2a_{2}}y_{1a_{1}}\}_{1\leq a_{1}<a_{2}<a_{3},a_{2}<j}$,
$\{z_{1j}x_{1a_{3}}y_{2a_{2}}y_{3a_{1}}\}_{1\leq a_{1}<a_{2}<a_{3}\leq j\leq4,}$
$_{\mbox{ or }1\leq a_{1}<a_{3}\leq j<a_{2}\leq4}$, $\{z_{1j}x_{1a_{3}}y_{1b_{1}}y_{2b_{2}}y_{3b_{3}}\}_{1\leq b_{2}<b_{3}<a_{3}\leq j\leq4,}$$_{\mbox{ or }1\leq a_{1}<a_{3}\leq j<a_{2}\leq4}$.
We can see the variable $x_{11}$ is not in the generating set of
the initial ideal of $\mathcal{L}$.
\end{example}
\medskip{}

\curraddr{\begin{center}
{\small Department of Mathematics, }
\par\end{center}}

\curraddr{\begin{center}
{\small University of California, Riverside, CA 92521, USA}
\par\end{center}}

\email{\begin{center}
e-mail: linkuei@ucr.edu
\par\end{center}}
\end{document}